\documentclass{amsart}

\makeatletter
 \def\LaTeX{\leavevmode L\raise.42ex
   \hbox{\kern-.3em\size{\sf@size}{0pt}\selectfont A}\kern-.15em\TeX}
\makeatother

\newcommand{\BibTeX}{{\rm B\kern-.05em{\sc
i\kern-.025emb}\kern-.08em\TeX}}
\newtheorem{col}{Corollary}[section]
\newtheorem{thm}{Theorem}[section]
\newtheorem{lem}[thm]{Lemma}

\theoremstyle{definition}
\newtheorem{defn}{Definition}

\numberwithin{equation}{section}

\begin{document}

\title[  Widths of Balls in Sobolev  spaces on  Manifolds]
{Kolmogorov and Linear  Widths of Balls in Sobolev  spaces on Compact Manifolds}

\maketitle
\begin{center}

\author {Daryl Geller}
\footnote{Department of Mathematics, Stony Brook University, Stony Brook, NY 11794-3651; (12/26/1950-01/27/2011)}

\author{Isaac Z. Pesenson }\footnote{ Department of Mathematics, Temple University,
 Philadelphia,
PA 19122; pesenson@temple.edu. The author was supported in
part by the National Geospatial-Intelligence Agency University
Research Initiative (NURI), grant HM1582-08-1-0019. }

\end{center}

\begin{abstract}
We determine upper asymptotic estimates of  Kolmogorov and linear  $n$-widths of unit balls in Sobolev  norms in $L_{p}$-spaces on smooth
compact Riemannian 
manifolds.  For  compact  homogeneous manifolds, we establish estimates which are asymptotically exact, for the natural
ranges of indices.
The proofs  heavily rely  on our previous results
such as: estimates for the near-diagonal localization of the kernels of  elliptic operators, Plancherel-Polya inequalities
on manifolds,  cubature formulas with positive coefficients and uniform
estimates on Clebsch-Gordon coefficients on general compact homogeneous manifolds.
\end{abstract}

 {\bf Keywords and phrases:}{ Compact manifold,  Laplace-Beltrami operator, Casimir operator, Sobolev space, eigenfunctions,
 cubature formulas,  kernels, $n$-widths.}

 {\bf Subject classifications}[2000]{ 43A85; 42C40; 41A17;
Secondary 41A10}


\section{Introduction and the main results}

 %
Daryl Geller and I were working on this paper during the Summer and Fall of 2010. Sadly, Daryl Geller passed away suddenly in 
January of 2011. I will always remember him as a good friend and a wonderful 
mathematician. 

\bigskip

The goal of the paper is to determine asymptotic estimates of  Kolmogorov and linear $n$-widths of unit balls in
Sobolev  norms in $L_{p}({\bf M})$-spaces on a smooth compact (connected) Riemannian 
manifold ${\bf M}$.  For  compact homogeneous manifolds,
we establish estimates which are asymptotically exact, for the natural ranges of indices.  For compact homogeneous
manifolds, we also obtain some lower bounds for Gelfand widths, which will be discussed in section 5.

Let us recall \cite{LGM} that for a given subset $H$ of a normed linear space $Y$, the Kolmogorov $n$-width
$d_{n}(H,Y)$ is defined as
$$
d_{n}(H,Y)=\inf_{Z_{n}}\sup_{x\in H}\inf_{z\in Z_{n}}\|x-z\|_{Y}
$$
where $Z_{n}$ runs over all $n$-dimensional subspaces of $Y$. The linear $n$-width $\delta_{n}(H,Y)$  is defined as 
$$
\delta_{n}(H,Y)=\inf _{A_{n}}\sup_{x\in H}\|x-A_{n}x\|_{Y}
$$
where $A_{n}$ runs over all bounded operators $A_{n}: Y\rightarrow Y$ whose range has dimension $n$.
The Gelfand  $n$-width of a subset $H$ in a linear space $Y$ is defined by
$$
d^{n}(H,Y)=\inf_{Z^{n}}\sup\{\|x\|: \>\>x\in H\cap Z^{n}\},
$$
where the infimum is taken over all subspaces $Z^{n}\subset Y$ of codimension $\leq n$.
The width $d_{n}$ characterizes the best approximative possibilities by approximations by $n$-dimensional subspaces, the width $\delta_{n}$ characterizes the best approximative possibilities  of any $n$-dimensional linear method. The width $d^{n}$ plays a key role in questions about interpolation and reconstruction of functions.

In our paper the notation $S_{n}$ will stay for either Kolmogorov $n$-width $d_{n}$ or linear $n$-width $\delta_{n}$;
 the notation $s_{n}$ will be used for either $d_{n}$ or Gelfand $n$-width $d^{n};
\>\> S^{n}$ will be used for either $d_{n}, \>\> d^{n},$ or $\delta_{n}$.

One then has the following relations (see \cite{LGM}, pp. 400-403,):
\begin{equation}
\label{pupqdn}
S^n(H_{1}, Y) \leq S^n(H, Y),
\end{equation}
if $H_{1}\subset H$, and 
\begin{equation}
\label{pupqdn2}
d^{n}(H, Y)=d^{n}(H, Y_{1}), \>\>\>
S_n(H, Y) \leq S_n(H, Y_{1}), \>\>H\subset Y_{1}\subset Y, 
\end{equation}
where  $Y_{1}$ is  a subspace of $ Y$.
  Moreover, the following inequality holds
\begin{equation}
\label{linmax}
\delta_n(H,Y) \geq \max(d_n(H,Y),d^n(H,Y)).
\end{equation}

If $\gamma \in \bf R$, we write $S^n(H, Y) \ll n^{\gamma}$ to mean that one has the upper estimate $S^n(H, Y) \leq Cn^{\gamma}$ for $n > 0$.
(Here $C$ is independent  of $n$).  We say that the upper estimate is  exact
if also $S^n(H, Y) \geq cn^{\gamma}$ for $n > 0$, and in that case we write $S^n(H, Y) \asymp cn^{\gamma}$.

Let  $L_{q}=L_{q}({\bf M}),\> 1\leq q\leq\infty,$ be the regular Lebesgue  space constructed with the Riemannian density.
Let $L$ be an elliptic smooth second-order 
differential operator $L$ which is self-adjoint and positive definite in $L_{2}({\bf M})$, such as the Laplace-Beltrami operator $\Delta$.   For such an operator all the powers $L^{r}, \>\>r>0,$ 
are well defined on $C^{\infty}({\bf M})\subset L_{2}({\bf M})$ and continuously map $C^{\infty}({\bf M})$ into itself. 
Using duality every operator  $L^{r}, \>\>r>0,$ can be extended  to distributions on ${\bf M}$.
The Sobolev space $W_{p}^{r}=W_{p}^{r}({\bf M}),\> 1\leq p\leq\infty, \>\> r>0,$ is defined as the space of all 
$f\in L_{p}({\bf M}), 1\leq p\leq \infty$ for which the following graph norm is finite
\begin{equation}
\label{Sob}
\|f\|_{W^{r}_{p}({\bf M})}=\|f\|_{p}+\|L^{r/2} f\|_{p}.
\end{equation}
If $p \neq 1, \infty$, this graph norm is independent of $L$, up to equivalence, by elliptic regularity theory on compact manifolds.    If $p = 1$ or $\infty$ we will need to specify which operator $L$ we are using; some of our results will apply for $L$ general.  In fact,
for our results which apply to general ${\bf M}$, we can use any $L$.   For the results which apply only to homogeneous manifolds ${\bf M}$,  we will need to use a specific $L$, namely 
 the image  $\mathcal{L}$ (under the differential of the quasi-regular representation of $G$ in
 $L_{p}({\bf M}),\> 1\leq p\leq\infty$) 
  of a central element in the enveloping algebra of ${\bf g}$ which can be represented as a "sum of squares" (see section 3 below).   
Note, that if $G$ is compact and semi-simple then $\mathcal{L}$ will be the image of the Casimir operator in the enveloping algebra of the Lie algebra ${\bf g}$.
 For certain homogeneous manifolds the operator  $\mathcal{L}$ coincides  with the Laplace-Beltrami operator $\Delta$ of an invariant metric.
This happens, for example, when $\bf M$ is a symmetric compact homogeneous manifold of rank one
 (=two point compact homogeneous manifold) or when ${\bf M}$ is a compact Lie group $G$.

It is  important to remember that in all our considerations the inequality
 $
r>s\left(\frac{1}{p}-\frac{1}{q}\right)_+
$
with $s=dim \>{\bf M}$ will be satisfied. Thus, by the Sobolev embedding theorem the set  $B^{r}_{p}({\bf M})$ is a  subset of $L_{q}({\bf M})$. Moreover, since ${\bf M}$ is compact by the Rellich-Kondrashov theorem the embedding of $B^{r}_{p}({\bf M})$ into $L_{q}({\bf M})$  will be compact.

Our objective is to obtain asymptotic estimates of $S_{n}(H, L_{q}({\bf M}))$,  where  $H$   is the unit ball $B^r_p({\bf M})$ in the Sobolev space 
$W_{p}^{r}=W_{p}^{r}({\bf M}), 1\leq p\leq\infty,\>\>r>0,$   Thus,
$$
B^r_p=B^r_p({\bf M})=\left\{f\in W_{p}^{r}({\bf M})\>: \> \|f\|_{W_{p}^{r}({\bf M})}\leq 1\right\},
$$
We  also consider compact homogeneous manifolds ${\bf M}=G/K, \>\>G$ being a compact Lie group (with Lie algebra ${\bf g}$)
and $K$ its closed subgroup. In the case of compact homogeneous manifolds we are able to obtain
exact asymptotic estimates on $S_{n}(H, L_{q}({\bf M}))$. 
  
We set $s = \dim {\bf M}$. Let  as usual $p'=\frac{p}{p-1}$.  Our main results are the following three Theorems which are proved in sections 2, 4, and 5 respectively.
\begin{thm}
\label{basic}
(Basic upper estimate)
For any compact Riemannian manifold, any $L$, and for any $1\leq p,q\leq\infty, \> r>0$,  if  $S_{n}$ is either of $d_{n}$ or $\delta_{n}$ then the following holds 
\begin{equation}
\label{basicway}
S_n(B^r_p({\bf M}),L_q({\bf M})) \ll n^{-\frac{r}{s}+(\frac{1}{p}-\frac{1}{q})_+},
\end{equation}
provided that $-\frac{r}{s}+(\frac{1}{p}-\frac{1}{q})_+$, which we call the {\em basic exponent}, is negative.
\end{thm}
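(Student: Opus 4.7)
The plan is to build a single rank-$n$ smooth spectral projector from the operator $L$ and show that it delivers the claimed rate simultaneously in the Kolmogorov and the linear sense. Fix a $C^\infty$ cut-off $\omega:[0,\infty)\to[0,1]$ with $\omega\equiv 1$ on $[0,1]$ and $\omega\equiv 0$ on $[2,\infty)$, and for a parameter $\Lambda>0$ put $\Phi_\Lambda=\omega(L/\Lambda)$ via the spectral theorem, with range $V_\Lambda$. By Weyl's law, $\dim V_\Lambda\asymp \Lambda^{s/2}$, so I would select $\Lambda=\Lambda(n)$ with $\dim V_\Lambda=n$, i.e.\ $\Lambda\asymp n^{2/s}$. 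Taking $Z_n=V_\Lambda$ bounds $d_n$, and taking the rank-$n$ bounded linear operator $A_n=\Phi_\Lambda$ bounds $\delta_n$; both widths are then controlled by $\|f-\Phi_\Lambda f\|_{L_q}$ uniformly over $f\in B^r_p(\mathbf{M})$.

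The crucial input is the near-diagonal kernel localization from the authors' earlier work, which gives that $\Phi_\Lambda$ and the smooth annular multipliers $\Psi_k=\Phi_{2^{k+1}\Lambda}-\Phi_{2^k\Lambda}$ are bounded on every $L_p(\mathbf{M})$, $1\le p\le\infty$, uniformly in $\Lambda$ and in $k$; moreover, the composition $\Psi_k L^{-r/2}$ has $L_p\to L_p$ operator norm $\lesssim (2^k\Lambda)^{-r/2}$. For $p\ge q$, the basic exponent is $-r/s$ and compactness of $\mathbf{M}$ gives $\|\cdot\|_q\le C\|\cdot\|_p$, so a telescoping decomposition $I-\Phi_\Lambda=\sum_{k\ge 0}\Psi_k$ together with the above kernel bound yields
\begin{equation*}
\|f-\Phi_\Lambda f\|_q\le C\sum_{k\ge 0}\|\Psi_k f\|_p\lesssim \sum_{k\ge 0}(2^k\Lambda)^{-r/2}\|L^{r/2}f\|_p\lesssim \Lambda^{-r/2}\asymp n^{-r/s},
\end{equation*}
the geometric series converging because $r>0$. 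For $p<q$, I invoke the Plancherel--Polya/Nikolskii inequality on $\mathbf{M}$ (again from the authors' earlier machinery, built on cubature formulas with positive coefficients): a function with spectral support in $[2^k\Lambda,2^{k+2}\Lambda]$ satisfies $\|g\|_q\lesssim (2^k\Lambda)^{s(1/p-1/q)/2}\|g\|_p$. Applying this term-by-term to $\Psi_k f$ gives
\begin{equation*}
\|f-\Phi_\Lambda f\|_q\lesssim \sum_{k\ge 0}(2^k\Lambda)^{s(1/p-1/q)/2-r/2}\|L^{r/2}f\|_p\lesssim \Lambda^{-r/2+s(1/p-1/q)/2}\asymp n^{-r/s+(1/p-1/q)},
\end{equation*}
where convergence of the series is exactly the hypothesis that the basic exponent is negative.

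The main technical obstacle is not the telescoping estimate itself but securing the $L_p$-uniformity for $p=1,\infty$: the spectral multipliers $\Phi_\Lambda$ and $\Psi_k$ must be bounded on $L_1$ and $L_\infty$ with constants independent of $\Lambda$ and $k$, and the Plancherel--Polya inequality must hold in that endpoint range. Both rest entirely on the near-diagonal kernel estimates for functions of $L$ and the positive-coefficient cubature formulas that the paper imports from the authors' previous work; once those are quoted, the rest of the argument is the short geometric-sum computation above.
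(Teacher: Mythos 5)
Your argument is essentially the paper's own proof in lightly different clothing: the paper also decomposes $I$ into dyadic spectral multipliers $\phi_j(L)$ (its Lemma \ref{phijest}), extracts the factor $2^{-jr}$ by writing $\phi_j(x)=2^{-(j-1)r}\psi_j(x)x^{r/2}$, bounds the annular pieces using the near-diagonal kernel estimates, counts the rank of the low-frequency projector by Weyl's law, and sums a geometric series that converges exactly when the basic exponent is negative. So the structure, the choice of $\Lambda\asymp n^{2/s}$, and both conclusions for $d_n$ and $\delta_n$ are fine.

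The one step you must repair concerns generality. You justify the passage from $L_p$ to $L_q$ (the gain $(2^k\Lambda)^{\frac{s}{2}(\frac1p-\frac1q)}$ for the spectrally localized pieces $\Psi_k f$) by citing the Plancherel--Polya/Nikolskii machinery ``built on cubature formulas with positive coefficients.'' In this paper that machinery for general $1\leq p\leq\infty$ (Theorem \ref{genp}) is established only for homogeneous manifolds with $L={\mathcal L}$, because its proof uses the $L_p$ Bernstein inequality (\ref{Bern}), which in turn relies on the sum-of-squares structure of the Casimir operator and the one-parameter isometry groups. Theorem \ref{basic}, however, is asserted for an arbitrary compact Riemannian manifold and an arbitrary elliptic $L$, so as written your chain of citations does not cover the claimed generality. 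The fix is immediate and is exactly what the paper does in Lemma \ref{phijest}: since $\Psi_k f$ has spectral support in $[2^k\Lambda,2^{k+2}\Lambda]$, one does not need any discretization at all; writing $\Psi_k L^{-r/2}$ as $ (2^k\Lambda)^{-r/2}\,\tilde\psi\bigl(L/(2^k\Lambda)\bigr)$ for a fixed $\tilde\psi\in\mathcal{S}({\bf R}^+)$ vanishing near $0$, the generalized Young inequality (Lemma \ref{younggen}) with $\frac1q+1=\frac1p+\frac1\alpha$, combined with the kernel bound $\bigl(\int|K_t(x,y)|^{\alpha}dy\bigr)^{1/\alpha}\leq Ct^{-s/\alpha'}$ of Corollary \ref{Lalphok}, gives directly $\|\Psi_k f\|_q \leq C (2^k\Lambda)^{\frac{s}{2}(\frac1p-\frac1q)-\frac{r}{2}}\|L^{r/2}f\|_p$ on any compact manifold. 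With that substitution (and the harmless adjustment that one can only arrange $\dim V_\Lambda\leq n$ with $\Lambda\asymp n^{2/s}$, not equality), your proof is complete and coincides with the paper's.
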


\begin{thm}(Improved estimates)
\label{impr}
Say ${\bf M}$ is a homogeneous manifold.  

\begin{enumerate}
\item
 Say $1\leq p \leq 2 \leq q\leq \infty$.  If $p = 1$, take $L = {\mathcal L}$.

Then one has the improved upper estimates
\begin{eqnarray}
d_n(B^r_p({\bf M}),L_q({\bf M})) & \ll & n^{-\frac{r}{s}+\frac{1}{p}-\frac{1}{2}}\ \ \ \ \ \ \ \ \ \ \ \ \mbox{if } r > s/p, \label{imp1}\\
\delta_n(B^r_p({\bf M}),L_q({\bf M})) & \ll & n^{-\frac{r}{s}+\frac{1}{p}-\frac{1}{2}}\ \ \ \ \ \ \ \ \ \ \ \ \mbox{if } q \leq p' \mbox{ and } r > s/p, \label{imp2}\\
\delta_n(B^r_p({\bf M}),L_q({\bf M})) & \ll & n^{-\frac{r}{s}+\frac{1}{2}-\frac{1}{q}}\ \ \ \ \ \ \ \ \ \ \ \ \mbox{if } q > p' \mbox{ and } r > s/q'. \label{imp3}
\end{eqnarray}
\item
 Say $2 \leq p \leq q\leq \infty$.  If $p = \infty$, take $L = {\mathcal L}$.

 Then one has the improved upper estimate
\begin{equation}
\label{imp4}
d_n(B^r_p({\bf M}),L_q({\bf M}))  \ll  n^{-\frac{r}{s}}\ \ \ \ \ \ \ \ \ \ \ \ \mbox{if } r > s/p. 
\end{equation}
\end{enumerate}
\end{thm}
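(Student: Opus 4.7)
The plan is to adapt the classical Kashin--Gluskin--Temlyakov paradigm for widths of Sobolev unit balls (on the torus) to the setting of a compact homogeneous manifold ${\bf M} = G/K$, using in place of Fourier expansion the spectral resolution of $L$ (or of $\mathcal{L}$ when forced by the endpoints $p = 1, \infty$), together with the analytic machinery mentioned in the abstract and developed in our earlier work.

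First I would decompose $f \in B_p^r({\bf M})$ dyadically in frequency. Let $P_j$ be the spectral projector of $L$ onto its eigenvalues in the shell $[4^{j-1}, 4^j)$, so that by Weyl's law $N_j := \dim\,\mathrm{range}(P_j) \asymp 4^{js/2}$. A Bernstein-type inequality on ${\bf M}$ gives $\|P_j f\|_{L_p} \leq C\cdot 4^{-jr/2}$ whenever $f \in B_p^r({\bf M})$. The near-diagonal localization of kernels of functions of $L$, combined with the positive-weight cubature formulas and Plancherel--Polya inequalities from our previous papers, then provides, uniformly in $j$, a linear isomorphism between $\mathrm{range}(P_j)$ (equipped with either its $L_p$-norm or its $L_q$-norm) and a discrete space $\ell_p^{N_j}$ or $\ell_q^{N_j}$ built on the nodes of a positive cubature formula of cardinality $\asymp N_j$. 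This step is exactly where the choice $L = \mathcal{L}$ becomes essential at $p = 1, \infty$, because on a general homogeneous manifold the Plancherel--Polya equivalence in $L_1$ or $L_\infty$ is available for this specific operator only.

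Having reduced the problem to $S_{n_j}(B_p^{N_j}, \ell_q^{N_j})$ band by band, I would invoke the classical finite-dimensional widths of $\ell_p^N$-balls in $\ell_q^N$-norms. For part (1) with $1 \leq p \leq 2 \leq q$, Kashin's theorem (and, for linear widths in the regime $q \leq p'$, the Garnaev--Gluskin refinement) furnishes
\begin{equation*}
d_{n_j}(B_p^{N_j}, \ell_q^{N_j}) \leq C\, N_j^{1/p - 1/2}\, n_j^{-1/2}
\end{equation*}
up to a logarithmic factor that will be harmless after dyadic summation; the range $q > p'$ in (\ref{imp3}) requires instead a Pietsch--Stesin-type estimate for the linear width, whose symmetric exponent $1/2 - 1/q$ arises by dualizing to the $\ell_{q'}$-side. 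For part (2), with $2 \leq p \leq q$, it suffices to combine the trivial bound $d_n(B_p^{N_j}, \ell_q^{N_j}) \leq N_j^{1/p - 1/q}$ for $n < N_j$ with exact reproduction once $n \geq N_j$.

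The assembly step allocates the budget $\{n_j\}$ as follows: pick $J$ so that $N_J \asymp n$, set $n_j = N_j$ (perfect reproduction) for $j < J$, spend the remaining budget on the critical block $j = J$ where the Kashin/Pietsch bound is invoked, and for $j > J$ dominate $\|P_j f\|_{L_q}$ by $N_j^{(1/p - 1/q)_+} \|P_j f\|_{L_p}$ via the Nikolskii inequality. The geometric series in $j$ is made convergent by the hypotheses $r > s/p$ (resp.\ $r > s/q'$), so the critical block $J$ controls the sum and yields precisely the exponents claimed in (\ref{imp1})--(\ref{imp4}). The main technical obstacle is the case-by-case bookkeeping needed to verify, in each of the four sub-cases, that the finite-dimensional width estimate and the Bernstein factor combine cleanly into the stated exponent, and to confirm that the logarithmic losses from Kashin's bound are absorbed at the scale of dyadic summation; the endpoints $p \in \{1, \infty\}$ are the delicate ones, as they are precisely where the special choice $L = \mathcal{L}$ enters nontrivially, both through the Bernstein step and through the Plancherel--Polya identification with $\ell_p^{N_j}$.
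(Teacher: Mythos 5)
Your skeleton (dyadic frequency blocks, discretization of each block to a finite-dimensional problem via positive cubature and Plancherel--Polya, then classical widths of $\ell_p$-balls) is the same as the paper's, but the assembly step is where your argument genuinely fails. You invoke the nontrivial finite-dimensional bound only on the single critical block $j=J$ with $N_J\asymp n$, and for every $j>J$ you take no approximation at all and estimate the block by Nikolskii; that tail contributes $\sum_{j>J}4^{-jr/2}N_j^{1/p-1/q}\asymp n^{-r/s+1/p-1/q}$, and since $q\ge 2$ this exponent is $\ge -\frac{r}{s}+\frac1p-\frac12$ (strictly larger unless $q=2$). So your scheme only reproduces the basic estimate of Theorem \ref{basic}, not (\ref{imp1})--(\ref{imp3}); the same defect appears in your part (2) argument, where the "trivial bound $N_j^{1/p-1/q}$ above the critical block" again gives $n^{-r/s+1/p-1/q}$ rather than $n^{-r/s}$. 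The entire content of the improvement --- and the reason the stronger hypothesis $r>s/p$ (rather than merely a negative basic exponent) is imposed --- is that one must continue to approximate on a whole range of super-critical blocks $v<k<(1+\rho^{-1})v$ with geometrically decaying budgets $n_k\asymp 2^{s((1+\rho)v-k\rho)}$, $0<\rho<2(r/s-1/p)$, before switching to the trivial bound; this is the allocation (\ref{nkdf}), combined with Gluskin's estimate $\delta_{n_k}(b_p^{m_k},\ell_{p'}^{m_k})\le Cm_k^{1/p'}n_k^{-1/2}\log^{1/2}(1+m_k/n_k)$ as in (\ref{casest}), and only then does the series telescope to $n^{-r/s+1/p-1/2}$. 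Relatedly, your quoted bound $d_{n_j}(b_p^{N_j},\ell_q^{N_j})\le CN_j^{1/p-1/2}n_j^{-1/2}$ is not of the usable form (the relevant bound carries $m^{1/p'}$, obtained e.g.\ from $b_p^m\subseteq m^{1/p'}b_1^m$ and Kashin--Gluskin into $\ell_2$ or $\ell_{p'}$), and even taken at face value it does not assemble to the exponent $\frac1p-\frac12$ once $q>2$. For (\ref{imp3}) the paper simply uses the duality $\delta_n(B^r_p,L_q)=\delta_n(B^r_{q'},L_{p'})$ to reduce everything to $q\le p'$, so no separate "Pietsch--Stesin" estimate is needed.

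Two secondary points. First, sharp spectral projectors onto eigenvalue shells are not uniformly bounded on $L_p$ for $p\ne 2$, so the Bernstein-type inequality $\|P_jf\|_p\lesssim 4^{-jr/2}\|f\|_{W_p^r}$ as you state it is unjustified; the paper works instead with smooth cutoffs $\phi_j(L)$ and obtains exactly this bound from kernel localization (Lemma \ref{phijest}), which is the correct substitute. Second, in the discretization step the role of $\mathcal L$ is not only the $L_1/L_\infty$ Plancherel--Polya equivalence: one also needs the product theorem for eigenfunctions of $\mathcal L$ (Theorem \ref{prodthm}) so that the cubature formula is exact on products such as $f(\cdot)K^N(\cdot,t_j)$; this Clebsch--Gordan-type input is essential to the factorization through $\ell^m_p\to\ell^m_{q,w}$ in Lemma \ref{bd41}, and your proposal omits it.
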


\begin{thm}(Exact estimates)
\label{basicshp}
Assume  ${\bf M}$ is a homogeneous manifold.  
If $p = 1$ or $\infty$, take $L = {\mathcal L}$.
Then the four improved estimates listed in Theorem \ref{impr} are all exact.
In all other situations (i.e. $p \leq 2 \leq q$ is false, or $2 \leq p \leq q$ and $S_n = \delta_n$), if the basic
exponent is negative, then the basic upper estimate is exact.
\end{thm}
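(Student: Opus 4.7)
The plan is to establish matching lower bounds by the standard ``discretize and reduce to finite-dimensional widths'' scheme, built on the Plancherel--Polya inequalities, positive-weight cubature formulas, and near-diagonal kernel estimates developed in our previous work. Fix $\lambda\ge 1$ and let $\mathbf{E}_{\lambda}\subset L_{2}(\mathbf{M})$ be the span of the eigenfunctions of $\mathcal{L}$ (or of $L$, when $p\neq 1,\infty$) with eigenvalue $\le\lambda^{2}$, so that $N:=\dim\mathbf{E}_{\lambda}\asymp\lambda^{s}$ by Weyl's law. A cubature $\{(\xi_{k},\mu_{k})\}_{k=1}^{N}$ exact on $\mathbf{E}_{2\lambda}$ with $\mu_{k}\asymp 1/N$, together with Plancherel--Polya, yields for every $\phi\in\mathbf{E}_{\lambda}$ and every $1\le t\le\infty$
\begin{equation*}
  \|\phi\|_{L_{t}(\mathbf{M})}\asymp N^{-1/t}\,\|(\phi(\xi_{k}))_{k=1}^{N}\|_{\ell_{t}^{N}} .
\end{equation*}
Using the near-diagonal localization of a band-limited reproducing kernel, I would then construct bumps $\psi_{k}\in\mathbf{E}_{c\lambda}$ acting as approximate $\delta$-functions at the nodes, with $\|\psi_{k}\|_{L_{t}}\asymp N^{-1/t}$ and the Bernstein-type estimate $\|\mathcal{L}^{r/2}\psi_{k}\|_{L_{t}}\lesssim\lambda^{r}N^{-1/t}$.

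With these bumps, the lifting $J\colon\ell_{p}^{N}\to W_{p}^{r}(\mathbf{M})$ defined by $J(a)=c\,N^{-1/p-r/s}\sum_{k}a_{k}\psi_{k}$ sends the unit $\ell_{p}^{N}$-ball into $B_{p}^{r}(\mathbf{M})$ for $c$ small, while the sampling map $Q\colon L_{q}(\mathbf{M})\to\ell_{q}^{N}$ (restricted to band-limited inputs) has $\|Q\|\lesssim N^{1/q}$ and $Q\circ J$ is a uniform Banach-space isomorphism onto its image. The standard factorization principle for widths (see, e.g., Lorentz--Golitschek--Makovoz) then produces
\begin{equation*}
  S_{n}\bigl(B_{p}^{r}(\mathbf{M}),L_{q}(\mathbf{M})\bigr)\;\gtrsim\;N^{-r/s+1/p-1/q}\,S_{n}\bigl(B_{p}^{N},\ell_{q}^{N}\bigr) .
\end{equation*}
At this stage I would substitute the classical lower bounds for the finite-dimensional widths $S_{n}(B_{p}^{N},\ell_{q}^{N})$ -- Kashin's for $d_{n}$ in the regime $p\le 2\le q$, Gluskin's and Pietsch's for $\delta_{n}$ when $q>p'$, and the easy dimension/volume bounds elsewhere -- and optimize by choosing $N\asymp n$, so that $n\le N/2$ and the sharp finite-dimensional asymptotics are in force. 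In each of the four improved cases this produces exactly the announced exponent; for example in the regime of \eqref{imp1}, Kashin's bound $d_{n}(B_{p}^{N},\ell_{q}^{N})\gtrsim N^{1/q+1/2-1/p}$ for $n\le N/2$ combines with the display and $N\asymp n$ to give $d_{n}(B_{p}^{r},L_{q})\gtrsim n^{-r/s+1/p-1/2}$. In the remaining ``basic'' regimes, the cruder bound $S_{n}(B_{p}^{N},\ell_{q}^{N})\gtrsim N^{-(1/p-1/q)_{+}}$ for $n\le N/2$ already matches the basic exponent.

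The main obstacle is the endpoint $p\in\{1,\infty\}$, where the Sobolev graph norm genuinely depends on $L$ and the Bernstein inequality $\|\mathcal{L}^{r/2}\phi\|_{p}\lesssim\lambda^{r}\|\phi\|_{p}$ on $\mathbf{E}_{\lambda}$ is not supplied by elliptic regularity. This is precisely why the hypothesis $L=\mathcal{L}$ is imposed in these cases: on a homogeneous manifold $G/K$, left $G$-invariance of $\mathcal{L}$ lets one obtain all bumps $\psi_{k}$ by $G$-translating a single reference bump, so the $L_{p}$-norms and the Bernstein estimate become uniform in $k$ and follow from a single computation with the central ``sum of squares'' multiplier in the enveloping algebra of $\mathbf{g}$. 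A secondary subtlety is the gap between $d_{n}$ and $\delta_{n}$ in the regime $2\le p\le q$: here the sharp lower bound for linear widths is strictly larger than that for Kolmogorov widths, and the $\delta_{n}$ case requires a Pietsch-type factorization argument rather than the simpler Kashin--Rudin--Shapiro construction that suffices for $d_{n}$.
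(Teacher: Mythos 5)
Your overall strategy (discretize, push the unit ball of an $\ell_p^N$ down into $B_p^r$ via localized bumps, and quote finite-dimensional width lower bounds) is the same as the paper's, but two steps are genuinely defective as written. First, the transfer step: a ``sampling map $Q:L_q\to\ell_q^N$ restricted to band-limited inputs'' is not an operator on $L_q$, and the factorization principle $s_n(QJ)\le\|Q\|s_n\|J\|$ needs maps defined on the whole ambient space; the competing $n$-dimensional subspaces (or rank-$n$ operators) in the definition of $d_n$ and $\delta_n$ are not band-limited, so your bound does not follow. The paper resolves exactly this point by \emph{not} using band-limited bumps: it builds compactly and disjointly supported bumps $\varphi_i^N$ from a kernel with finite propagation speed (Theorem \ref{wavprop}), proves the Sobolev bound for their linear combinations directly from kernel estimates (Lemma \ref{disfns}(iii), via the technical fact (*)), and then gets the transfer in Lemma \ref{lowbd1} by a projection $Q_N$ bounded on all of $L_q$ (possible precisely because of the disjoint supports), while for Gelfand widths it avoids projections altogether via the Hahn--Banach property $d^n(K,X)=d^n(K,Y)$; lower bounds for $\delta_n$ are then obtained from $\delta_n\ge\max(d_n,d^n)$ (inequality (\ref{linmax})) rather than from Gluskin/Pietsch-type lower bounds for finite-dimensional linear widths, which in your scheme would again have to be transferred through an operator on $L_q$ that you have not constructed. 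Note also that homogeneity enters not through translating one bump but through Lemma \ref{eigensum} and Theorem \ref{Lalphimp}, which give the uniform lower bound $\|\varphi_i^N\|_q\gtrsim N^{-1/q}$.

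Second, the finite-dimensional inputs you quote do not close the argument. The claimed ``Kashin bound'' $d_n(b_p^N,\ell_q^N)\gtrsim N^{1/q+1/2-1/p}$ for $n\le N/2$ is false in general (for $(p,q)=(2,\infty)$ it would give a constant, whereas $d_n(b_2^{2n},\ell_\infty^{2n})\asymp n^{-1/2}$), and even taken at face value it combines with your factor $N^{-r/s+1/p-1/q}$ to give $n^{-r/s+1/2}$, not the stated $n^{-r/s+1/p-1/2}$ -- a ``lower bound'' exceeding the proven upper estimate. What is actually needed (and what the paper uses) is the elementary $d_n(b_p^N,\ell_q^N)\gtrsim N^{1/q-1/2}$, obtained from the exact formula $d_n(b_1^M,\ell_2^M)=\sqrt{1-n/M}$ together with H\"older inclusions, and dually $d^n(b_2^M,\ell_\infty^M)=\sqrt{1-n/M}$ for the Gelfand/linear case. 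Likewise your ``crude bound'' $S_n(b_p^N,\ell_q^N)\gtrsim N^{-(1/p-1/q)_+}$ does not reproduce the basic exponent: for $q\le p$ one must exploit that $d_n(b_\infty^M,\ell_1^M)=M-n\asymp N$ \emph{grows} with $N$ (this growth compensates the unfavorable factor $N^{1/p-1/q}<1$ and yields $n^{-r/s}$), and for $p\le q\le 2$ one needs the constant lower bound $d_n(b_1^M,\ell_2^M)\ge 1/\sqrt2$, not a negative power of $N$. So both the choice of finite-dimensional estimates and the bookkeeping of exponents must be redone along these lines before the proof is complete.
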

 
Thus  if ${\bf M}$ is a homogeneous
manifold we obtain exact asymptotic estimates for $d_{n}$ and $\delta_{n}$ for all $1\leq p,q\leq \infty$ and some restrictions on $r$. For general compact Riemannian manifolds we obtain only upper esimates.
Our results generalize some of the known estimates for
the particular case in which $\bf M$ is a compact symmetric space of rank one; these estimates were obtained in 
papers \cite{BKLT} and \cite{brdai}.
They, in turn generalized and extended results from \cite{BirSol}, \cite{Ho}, \cite{Kas}, \cite{Ma}, \cite{Ka1} and \cite{Ka2}.

Our main Theorems could be carried over to Besov spaces on manifolds using general results about interpolation of compact operators. 

The proofs of all the main results heavily exploit our estimates for the near-diagonal localization of the kernels of  
elliptic operators on compact manifolds  (see \cite{gpes} and section 2 below for the general case and  \cite{gm1}- \cite{gmmix} for the case of Laplace-Beltrami operator). These estimate allow one to decompose 
functions into bandlimited and fast decaying parts.

Of course, homogeneous compact manifolds are much ``better" than general compact Riemannian manifolds (section 3 below). But the main reason we obtain
exact asymptotic estimates is that in the case of homogeneous manifolds
we are able to find a uniform estimate on the 
number of non-zero Fourier coefficients of the product of two eigenfunctions of $\mathcal L$ 
(Theorem \ref{prodthm} bellow and Theorem 5.1 of \cite{gpes}). Note that this result is well known, say,
  for spherical harmonics and the corresponding non-zero coefficients are known as Wigner symbols.
 In a more general context  it is a problem of decomposing a tensor product of two representations of a compact 
Lie group into irreducible representations in which case the corresponding Fourier coefficients are known as the Clebsch-Gordon coefficients. 
  
  Out result about Clebsh-Gordon coefficients along with our positive cubature formula (Theorem \ref{cubformula} below,
 see also \cite{gpes}) allows us to discretize convolution integrals of eigenfunctions of $\mathcal {L}$ with zonal functions. 
It is the main technical trick in section 3 which produces improved estimates in the case of homogeneous manifolds.
  
  Note that the proof of  existence (even on general compact Riemannian manifolds) of cubature formulas   
with positive coefficients which are exact on eigensubspaces was prepared for in  \cite{Pes08} and published in \cite{gpes}. Plancherel-Polya-type inequalities on compact and non-compact manifolds were extensively developed in used in our previous papers.

In connection with cubature formulas and  Marcinkiewicz-Zygmund  (or Plancherel-Polya) inequalities on compact  manifolds we refer to  the following papers   \cite{FM1},  \cite{FM2},  \cite{HMS}, \cite{Mar}-- \cite{OCP2},  which contain a number of  important results.

\section{The Basic Upper Estimate on general compact Riemannian manifolds}

Let $({\bf M},g)$ be a smooth, connected, compact Riemannian manifold without boundary
with  Riemannian measure $\mu$.  We write $dx$ instead of $d\mu(x)$.
For $x,y \in {\bf M}$, let $d(x,y)$ denote the geodesic distance from $x$ to $y$.
We will frequently need the fact that if $M > s$, $x \in {\bf M}$ and $t > 0$, then
\begin{equation}
\label{intest}
\int_{\bf M} \frac{1}{\left[1 + (d(x,y)/t)\right]^M} dy \leq Ct^{s}
\end{equation}
with $C$ independent of $x$ or $t$.  (See, for example, the third bulleted point after Proposition 3.1 of 
\cite{gmcw}. (Note that in \cite{gmcw}, the dimension of the manifold in $n$, not $s$.)
 
Let $L$ be a smooth, positive, second order elliptic differential operator on ${\bf M}$,
whose principal symbol $\sigma_2(L)(x,\xi)$ is positive on  $\{(x,\xi) \in T^*{\bf M}:\ \xi \neq 0\}$.
In the proof of Theorems \ref{basic} and \ref{basicshp} we will take $L$ to be the Laplace-Beltrami
operator of the metric $g$,  but in the proof of Theorem \ref{impr} we will let $L$ be the Laplace operator
${\mathcal L}$, which we will discuss in the next section.
We will use the same notation
$L$ for the closure of $L$ from $C^{\infty}({\bf M})$ in $L_{2}({\bf M})$.
In the case $p=2$ this closure is a
self-adjoint positive definite operator on the space $L_{2}({\bf M})$.
The spectrum of this operator, say
$0=\lambda_{0}<\lambda_{1}\leq \lambda_{2}\leq ...$,
is discrete and approaches infinity.  Let
$u_{0}, u_{1}, u_{2}, ...$ be a corresponding
complete system of real-valued orthonormal eigenfunctions, and let
$\textbf{E}_{\omega}(L),\ \omega>0,$ be the span of all
eigenfunctions of $L$, whose corresponding eigenvalues
are not greater than $\omega$.    Since the
operator $L$ is of order two, the dimension
$\mathcal{N}_{\omega}$ of the space ${\mathbf E}_{\omega}(L)$ is
given asymptotically by Weyl's formula,
which says \cite{Hor68}, in sharp form:
For some $c > 0$,
\begin{equation}
\label{Weyl}
\mathcal{N}_{\omega}(L) = c\omega^{s/2} + O(\omega^{(s-1)/2}).
\vspace{.3cm}
\end{equation}
where $s=dim {\bf M}$.
Since $\mathcal{N}_{\lambda_l} = l+1$, we conclude that, for some constants $c_1, c_2 > 0$,
\begin{equation}
\label{lamest}
c_1 l^{2/s} \leq \lambda_l \leq c_2 l^{2/s}  
\end{equation}
for all $l$.
Since $L^m u_l = \lambda_l^m u_l$, and $L^m$ is an elliptic differential
operator of degree $2m$, Sobolev's lemma, combined with the last fact, implies that
for any integer $k \geq 0$, there exist $C_k, \nu_k > 0$ such that
\begin{equation}
\label{ulest}
\|u_l\|_{C^k({\bf M})} \leq C_k (l+1)^{\nu_k}. 
\end{equation}
 From these  facts one sees at once: 
\begin{equation}
\label{frec}
\begin{split}
\mbox{The mapping } \sum a_l u_l \rightarrow (a_l)_{l \geq 0} \mbox{ gives a Fr\'echet space isomorphism}\\
\mbox{ of }
C^{\infty}({\bf M}) \mbox{ with the space of rapidly decaying
sequences.}
\end{split}
\end{equation}
In particular, smooth functions are precisely those functions $F$ which can be written as $\sum_{l = 0}^{\infty} a_l u_l$, for certain
$a_l$ which decay rapidly.  If $r > 0$, $L^{r/2}f$ is defined to be the smooth function
$\sum_{l=0}^{\infty} a_l \lambda_l^{r/2} u_l$.  
In fact, from (\ref{frec}), we see that $L^{r/2}$ maps $C^{\infty}({\bf M})$ to itself continuously, and may thus
be extended by duality to a map on distributions.

Suppose  $F \in\mathcal{ S}(\bf{R}^{+})$, the space of restrictions to the 
nonnegative real axis of Schwartz functions on $\bf{R}$.  Using the spectral theorem,
one can define the bounded operator $F(t^{2}L)$ on $L_2({\bf M})$.  In fact, for $f \in L_2({\bf M})$,
\begin{equation}
\label{ft2F}
[F(t^{2}L)f](x) = \int K_t(x,y) f(y) dy,
\end{equation}
where 
\begin{equation}
\label{expout}
K_t(x,y) = \sum_l F(t^2\lambda_l)u_l(x)u_l(y) = K_t(y,x)
\end{equation}
as one sees easily by checking the case $F = u_m$.  
Using (\ref{expout}), (\ref{Weyl}), (\ref{lamest}) and (\ref{ulest}), one easily checks that $K_t(x,y)$ is smooth in $(x,y) \in 
{\bf M} \times {\bf M}$.  We call $K_t$ the kernel of $F(t^2L)$.  $F(t^2L)$ maps $C^{\infty}({\bf M})$
to itself continuously, and may thus be extended to be a map on distributions.  In particular
we may apply $F(t^2L)$ to any $F \in L_p({\bf M}) \subseteq L_1({\bf M})$ (where $1 \leq p \leq \infty$), and by Fubini's theorem
$F(t^2L)F$ is still given by (\ref{ft2F}).  

The following Theorem  about $K_t$ was proved in  \cite{gmcw} 
in the special case in which $L$ was $\Delta$.  In \cite{gpes} we argued that the result 
generalize to the situation in which $L$ is general. 
\begin{thm}
\label{nrdglc}
(Near-diagonal localization) Assume $fF\in \mathcal{ S}(\bf{R}^{+})$ (the space of restrictions to the 
nonnegative real axis of Schwartz functions on $\bf{R}$).
For $t > 0$, let $K_t(x,y)$ be the kernel of $F(t^{2}L)$.  Then: \\
(a)  If $F(0) = 0$.  Then for every pair of
$C^{\infty}$ differential operators $X$ $($in $x)$ and $Y$  $($in $y)$ on ${\bf M}$,
and for every integer $N \geq 0$, there exists $C_{N,X,Y}$ as follows.  Suppose
$\deg X = j$ and $\deg Y = k$.  Then
\begin{equation}
\label{diagest}
t^{s+j+k} \left|\left(\frac{d(x,y)}{t}\right)^N XYK_t(x,y)\right| \leq C_{N,X,Y} ,\>\>s=dim{\bf M},
\end{equation}
for all $t > 0$ and all $x,y \in {\bf M}$.\\
(b) For general $F$, the estimate (\ref{diagest}) at least holds for $0 < t \leq 1$.
\end{thm}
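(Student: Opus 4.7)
The plan is to exploit finite propagation speed of the wave operator $\cos(\tau\sqrt{L})$ together with spectral calculus, following the scheme of Cheeger--Gromov--Taylor. Because $L$ is positive self-adjoint with $\sigma_2(L)>0$ on $T^*\mathbf{M}\setminus 0$, the operator $\sqrt{L}$ is defined spectrally, and a standard energy estimate shows that the Schwartz kernel of $\cos(\tau\sqrt{L})$ is distributionally supported in $\{(x,y):d(x,y)\le c|\tau|\}$ for a constant $c$ determined by $\sigma_2(L)$ (and comparable to unity, since $\sigma_2(L)$ is comparable to the metric dual form).

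Given $F\in\mathcal{S}(\mathbb{R}^+)$, set $G(\lambda)=F(\lambda^2)$, an even Schwartz function on $\mathbb{R}$. Fourier inversion combined with the spectral theorem gives
\begin{equation*}
F(t^2L)=G(t\sqrt{L})=\frac{1}{\sqrt{2\pi}}\int_{\mathbb{R}}\widehat{G}(\tau)\cos(t\tau\sqrt{L})\,d\tau.
\end{equation*}
Finite propagation speed kills the contribution from $d(x,y)>ct|\tau|$, so the $\tau$-integral effectively runs over $|\tau|\ge d(x,y)/(ct)$; multiplying by $(d(x,y)/t)^N$ and absorbing the resulting factor $|\tau|^N$ into the Schwartz decay of $\widehat{G}$ yields the desired localization at the distributional level. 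To pass to a pointwise estimate on $XYK_t(x,y)$ I would invoke elliptic regularity: for $m>(s+j+k)/4$, Sobolev embedding gives
\begin{equation*}
|XYK_t(x,y)|\le C_m\,\|(I+L_x)^m(I+L_y)^m K_t\|_{L_2(\mathbf{M}\times\mathbf{M})},
\end{equation*}
whose right-hand side is the Hilbert--Schmidt norm of $(I+L)^mF(t^2L)(I+L)^m$, computable via Plancherel and Weyl's formula (\ref{Weyl}). Applying this argument to $\lambda\mapsto\lambda^{N/2}F(\lambda)$ in place of $F$ extracts the factor $(d(x,y)/t)^N$ and produces the decay $t^{-(s+j+k)}$ for $0<t\le 1$, establishing part (b).

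For part (a) one must additionally control $t\ge 1$, and this is where the hypothesis $F(0)=0$ enters. Factor $F(\lambda)=\lambda^{M}F_M(\lambda)$ with $F_M\in\mathcal{S}(\mathbb{R}^+)$ for any $M\ge 1$, so that $F(t^2L)=t^{2M}L^{M}F_M(t^2L)$. Inserting this into the spectral expansion (\ref{expout}), and using the Weyl asymptotic (\ref{Weyl}) together with the uniform derivative bounds (\ref{ulest}) on the eigenfunctions, one sees that $\sum_l F(t^2\lambda_l)(Xu_l)(x)(Yu_l)(y)$ converges absolutely with bound $\ll t^{-2M+C_{X,Y}}$, which for $M$ large is as small as we please; since $\mathbf{M}$ has finite diameter, the factor $(d(x,y)/t)^N$ is then harmless.

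The main technical obstacle is joining the small-$t$ estimate (from the wave-equation representation) with the large-$t$ estimate (from eigenfunction asymptotics) uniformly across $X$, $Y$ and $N$, and tracking how the propagation constant $c$ compares to the Riemannian distance. The role of $F(0)=0$ is transparent: the constant eigenfunction $u_0$ contributes $F(t^2\cdot 0)\,u_0(x)u_0(y)=F(0)/\mu(\mathbf{M})$ to $K_t(x,y)$, and this term does not decay in $t$, so without $F(0)=0$ the estimate (\ref{diagest}) simply fails for large $t$.
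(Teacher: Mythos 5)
The paper itself does not prove Theorem \ref{nrdglc}: it is imported from \cite{gmcw} (for $L=\Delta$) and \cite{gpes} (general $L$), and the only in-paper hint about the proof is that the finite propagation speed statement, Theorem \ref{wavprop}, is an essential ingredient. So your starting point, writing $F(t^{2}L)=G(t\sqrt{L})$ with $G(\lambda)=F(\lambda^{2})$ even and representing it through $\cos(t\tau\sqrt{L})$ via Fourier inversion, is consistent with the cited argument, and your reading of the hypothesis $F(0)=0$ is correct: the $u_{0}$ term in (\ref{expout}) contributes the non-decaying constant $F(0)/\mu({\bf M})$, while for $l\geq 1$ and $t\geq 1$ the terms decay rapidly by (\ref{lamest}) and (\ref{ulest}). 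One inaccuracy there: $F(0)=0$ only lets you write $F(\lambda)=\lambda F_{1}(\lambda)$ with $F_{1}\in\mathcal{S}({\bf R}^{+})$, not $\lambda^{M}F_{M}(\lambda)$ for every $M$; luckily only the $l=0$ term needs the vanishing, so this is repairable.

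The genuine gap is the passage from the finite-propagation-speed localization, which is an $L_{2}$/distributional statement, to the pointwise bound (\ref{diagest}). Your conversion, bounding $|XYK_{t}(x,y)|$ by the Hilbert--Schmidt norm of $(I+L)^{m}F(t^{2}L)(I+L)^{m}$ via Sobolev embedding on ${\bf M}\times{\bf M}$, destroys the localization (a global $L_{2}({\bf M}\times{\bf M})$ norm cannot see where $(x,y)$ sits relative to the diagonal) and loses powers of $t$: by Plancherel and (\ref{Weyl}) that norm is of size $t^{-4m-s/2}$, and since Sobolev embedding on the $2s$-dimensional product forces $4m>s$, even for $j=k=N=0$ you get at best $t^{-3s/2}$ instead of $t^{-s}$. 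The attempted repair, ``apply the argument to $\lambda\mapsto\lambda^{N/2}F(\lambda)$,'' does not extract $(d(x,y)/t)^{N}$: that substitution inserts the operator $(t^{2}L)^{N/2}$ and has nothing to do with multiplying the kernel by a function of $d(x,y)$. The workable route compatible with your setup is: for $d(x,y)\gg t$ split $G=G_{1}+G_{2}$ with $\mathrm{supp}\,\widehat{G_{1}}\subset\{|\tau|\leq d(x,y)/(2C_{0}t)\}$, so that $G_{1}(t\sqrt{L})$ contributes nothing at $(x,y)$ by Theorem \ref{wavprop}, while $G_{2}$ has Schwartz seminorms $O((t/d(x,y))^{N})$; then bound the kernel of $G_{2}(t\sqrt{L})$ pointwise by writing $G_{2}$ as a product of two factors and using Cauchy--Schwarz, which requires the uniform on-diagonal estimate $\sup_{x}\sum_{\lambda_{l}\leq\Lambda}|u_{l}(x)|^{2}\leq C\Lambda^{s/2}$, i.e.\ the pointwise (local) Weyl law of \cite{Hor68}, not merely the counting asymptotics (\ref{Weyl}) and the crude bounds (\ref{ulest}) you invoke; derivatives $X,Y$ then cost $t^{-j}$, $t^{-k}$ by the same scheme applied with scale-$t$ elliptic estimates. (On a homogeneous manifold Lemma \ref{eigensum} would supply the on-diagonal bound for free, but the theorem is asserted for arbitrary compact ${\bf M}$.) As written, your proposal does not reach (\ref{diagest}).
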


In this article, we will use the following corollaries of Theorem \ref{nrdglc}.

\begin{col}
\label{kersize}
Assume  $F \in \mathcal{ S}(\bf{R}^{+})$.
For $t > 0$, let $K_t(x,y)$ be the kernel of $F(t^{2}L)$.  Suppose that either:\\
(i) $F(0) = 0$, or\\
(ii) $F$ is general, but we only consider $0 < t \leq 1$.\\ \ \\
Then for some $C > 0$,
\begin{equation}
\label{kersizeway}
|K_t(x,y)| \leq \frac{Ct^{-s}}{\left[ 1+\frac{d(x,y)}{t} \right]^{s+1}}
\end{equation}
for all $t$ and all $x,y \in {\bf M}$.
\end{col}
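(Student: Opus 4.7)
\medskip

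\noindent\textbf{Proof proposal.} The plan is to derive the claimed bound directly from Theorem~\ref{nrdglc} by specializing the differential operators to the identity and playing off two different choices of the exponent $N$. Concretely, take $X = Y = \mathrm{Id}$ in (\ref{diagest}), so that $j = k = 0$; in case (i) this is legitimate for every $t>0$, while in case (ii) it is legitimate only for $0 < t \leq 1$. For any integer $N \geq 0$ we then obtain a constant $C_N$ with
\begin{equation*}
|K_t(x,y)| \;\leq\; \frac{C_N\, t^{-s}}{\bigl(d(x,y)/t\bigr)^{N}}
\end{equation*}
in the appropriate range of $t$.

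The strategy is then to select $N$ depending on the size of $d(x,y)/t$. First I would apply this inequality with $N = 0$, giving the uniform bound $|K_t(x,y)| \leq C_0 t^{-s}$. This already yields the desired estimate on the near-diagonal region $d(x,y) \leq t$, because there $1 + d(x,y)/t \leq 2$, so $[1+d(x,y)/t]^{-(s+1)}$ is bounded below by $2^{-(s+1)}$. Next I would apply the inequality with $N = s+1$, obtaining
\begin{equation*}
|K_t(x,y)| \;\leq\; \frac{C_{s+1}\, t^{-s}}{\bigl(d(x,y)/t\bigr)^{s+1}}.
\end{equation*}
On the far-diagonal region $d(x,y) > t$ one has $1 + d(x,y)/t \leq 2\,d(x,y)/t$, so this second inequality yields the claimed bound (with constant $2^{s+1}C_{s+1}$) on that region as well.

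Combining the two regions with the constant $C = \max\bigl(2^{s+1}C_0,\,2^{s+1}C_{s+1}\bigr)$ gives (\ref{kersizeway}) throughout ${\bf M}\times{\bf M}$, for all admissible $t$. There is no genuine obstacle here: the only subtlety is to keep track of the hypotheses of Theorem~\ref{nrdglc}, namely that the $N$-independent estimate for $N \geq 0$ is available for every $t > 0$ in case (i) (since $F(0)=0$) and only for $0 < t \leq 1$ in case (ii); this is precisely why the statement of Corollary~\ref{kersize} is split into the two cases.
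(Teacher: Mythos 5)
Your argument is correct and coincides with the paper's own proof, which likewise takes $X=Y=I$ in Theorem~\ref{nrdglc} and combines the cases $N=0$ (near-diagonal, $d(x,y)\leq t$) and $N=s+1$ (off-diagonal, $d(x,y)>t$). Your write-up simply makes explicit the bookkeeping that the paper leaves as "immediate."
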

{\bf Proof}  This is immediate from Theorem \ref{nrdglc}, with $X=Y=I$, if one
considers the two cases $N=0$ and $N=s+1$.

\begin{col}
\label{Lalphok}
Consider  $1 \leq \alpha \leq \infty$, with conjugate index $\alpha'$.
In the situation of Theorem \ref{kersize}, there is a constant $C > 0$ such that
\begin{equation}
\label{kint3a}
\left(\int |K_t(x,y)|^{\alpha}dy\right)^{1/\alpha} \leq Ct^{-s/\alpha'} \ \ \ \ \ \ \ \ \ \ \ \ \ \ \ \ \ \ \ \ \mbox{for all } x, 
\end{equation}
and
\begin{equation}
\label{kint4}
\left(\int |K_t(x,y)|^{\alpha}dx\right)^{1/\alpha} \leq Ct^{-s/\alpha'} \ \ \ \ \ \ \ \ \ \ \ \ \ \ \ \ \ \ \ \ \mbox{for all } y, 
\end{equation}
\end{col}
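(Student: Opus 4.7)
The plan is to deduce both inequalities in Corollary \ref{Lalphok} directly from the pointwise bound (\ref{kersizeway}) of Corollary \ref{kersize} combined with the integral estimate (\ref{intest}), and then to use the symmetry $K_t(x,y) = K_t(y,x)$ recorded in (\ref{expout}) to pass from (\ref{kint3a}) to (\ref{kint4}).

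First I would fix $x \in {\bf M}$ and raise (\ref{kersizeway}) to the $\alpha$-th power, yielding
\begin{equation*}
\int_{\bf M} |K_t(x,y)|^\alpha \, dy \;\leq\; C^\alpha\, t^{-s\alpha} \int_{\bf M} \frac{dy}{\bigl[1+d(x,y)/t\bigr]^{\alpha(s+1)}}.
\end{equation*}
Since $\alpha \geq 1$, the exponent satisfies $M := \alpha(s+1) > s$, so (\ref{intest}) applies and the inner integral is bounded by $C t^s$. Combining and taking the $\alpha$-th root gives
\begin{equation*}
\left(\int_{\bf M} |K_t(x,y)|^\alpha\, dy\right)^{1/\alpha} \leq C'\, t^{-s}\, t^{s/\alpha} = C'\, t^{-s(1-1/\alpha)} = C'\, t^{-s/\alpha'},
\end{equation*}
uniformly in $x$, which is (\ref{kint3a}). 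The cases $\alpha=1$ (where $1/\alpha' = 0$, giving a uniform bound on $\|K_t(x,\cdot)\|_1$) and $\alpha=\infty$ (where one uses the pointwise bound directly at $y=x$) are handled the same way, or by taking the obvious limits.

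For (\ref{kint4}), the symmetry $K_t(x,y) = K_t(y,x)$ coming from (\ref{expout}) reduces the inequality to (\ref{kint3a}) with the roles of $x$ and $y$ swapped. There is no real obstacle here; the only thing to keep track of is the hypothesis regime of Corollary \ref{kersize} (either $F(0)=0$ for all $t$, or general $F$ with $0 < t \leq 1$), under which the starting pointwise bound is valid, and this carries over to Corollary \ref{Lalphok} without modification. The entire argument is a one-line computation once (\ref{kersizeway}) and (\ref{intest}) are in hand.
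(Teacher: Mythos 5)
Your argument is correct and is essentially identical to the paper's own proof: raise the pointwise bound (\ref{kersizeway}) to the $\alpha$-th power, integrate using (\ref{intest}) (noting $\alpha(s+1)>s$), handle $\alpha=\infty$ directly from the pointwise bound, and obtain (\ref{kint4}) from the symmetry $K_t(x,y)=K_t(y,x)$. Nothing further is needed.
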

{\bf Proof}  We need only prove (\ref{kint3a}), since $K_t(y,x) = K_t(x,y)$.

If $\alpha < \infty$,
(\ref{kint3a}) follows from Corollary \ref{kersize}, which tells us that
\[
\int |K_t(x,y)|^{\alpha}dy 
\leq  C \int_{\bf M} \frac{t^{-s\alpha}}{\left[1 + (d(x,y)/t)\right]^{\alpha(s+1)}} dy \leq Ct^{s(1-\alpha)} \]
with $C$ independent of $x$ or $t$, by (\ref{intest}).

If $\alpha = \infty$, the left side of (\ref{kint3a}) is as usual to be interpreted as the $L^{\infty}$ norm
of $h_{t,x}(y) = K_t(x,y)$.  But in this case the conclusion is immediate from 
Corollary \ref{kersize}.  

This completes the proof.\\

We will use Corollary \ref{Lalphok} in conjunction with the following
fact.  We consider operators of the form $f \to {\mathcal K}f$
where
\begin{equation}
\label{kopdf}
({\mathcal K}f)(x) = \int K(x,y)f(y)dy, 
\end{equation}
where the integral is over ${\bf M}$, and where we are using Riemannian measure.
In all applications, $K$ will be continuous on ${\bf M} \times {\bf M}$,
and $F$ will be in $L_1({\bf M})$, so that ${\mathcal K}f$ will be a bounded continuous function.
The following generalization of Young's inequality holds:

\begin{lem}
\label{younggen}
Suppose $1 \leq p, \alpha \leq \infty$, and that $(1/q)+1 = (1/p)+(1/\alpha)$. 
Suppose that $c > 0$, and that
\begin{equation}
\label{kint1}
[\int |K(x,y)|^{\alpha}dy]^{1/\alpha} \leq c \ \ \ \ \ \ \ \ \ \ \ \ \ \ \ \ \ \ \ \ \mbox{for all } x, 
\end{equation}
and
\begin{equation}
\label{kint1}
[\int |K(x,y)|^{\alpha}dx]^{1/\alpha} \leq c \ \ \ \ \ \ \ \ \ \ \ \ \ \ \ \ \ \ \ \ \mbox{for all } y, 
\end{equation}
Then $\|{\mathcal K}f\|_q \leq c\|f\|_p$ for all $f \in L_p$.
\end{lem}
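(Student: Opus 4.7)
My plan is to prove Lemma \ref{younggen} by the classical argument for the generalized Young inequality (sometimes called the Schur--Young test), which is a general statement about integral operators on a $\sigma$-finite measure space and so goes through verbatim on $({\bf M},\mu)$. Nothing in the hypotheses involves the manifold structure beyond having two bound symmetric in the ``$dx$'' and ``$dy$'' slots, which is exactly what one needs to run the argument.

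First I would dispose of the endpoint cases. If $\alpha=1$, then $q=p$ and the inequality is just Minkowski's integral inequality applied to $({\mathcal K}f)(x)=\int K(x,y)f(y)\,dy$, using the bound on $\int|K(x,y)|\,dx$. If $p=1$, then $q=\alpha$ and the same Minkowski-type argument applies. The case $p=\infty$ forces $\alpha=1$ and is immediate from the uniform bound on $\int|K(x,y)|\,dy$, and $\alpha=\infty$ forces $p=1$. So the real work is in the range $1<p,\alpha<\infty$, for which the relation $\tfrac{1}{q}+1=\tfrac{1}{p}+\tfrac{1}{\alpha}$ forces $p\leq q$ and $\alpha\leq q$.

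For this range, the plan is to factor the integrand as a product of three pieces and apply H\"older with three exponents. Specifically, write
\[
|K(x,y)||f(y)| \;=\; \bigl(|K(x,y)|^{\alpha}|f(y)|^{p}\bigr)^{1/q}\cdot|K(x,y)|^{1-\alpha/q}\cdot|f(y)|^{1-p/q},
\]
and apply H\"older with exponents $q$, $r:=\alpha q/(q-\alpha)$, $s:=pq/(q-p)$. The choice of $r,s$ is forced by requiring that the second factor live in $L^{r}$ with the $\alpha$-exponent and the third in $L^{s}$ with the $p$-exponent; a direct check using the identity $1/p+1/\alpha-1/q=1$ confirms $1/q+1/r+1/s=1$. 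This gives the pointwise bound
\[
|({\mathcal K}f)(x)|\;\leq\;\Bigl(\int|K(x,y)|^{\alpha}|f(y)|^{p}\,dy\Bigr)^{1/q}\Bigl(\int|K(x,y)|^{\alpha}\,dy\Bigr)^{1/r}\|f\|_{p}^{p/s}.
\]

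The two hypotheses now play their roles. The middle factor is bounded by $c^{\alpha/r}$ for every $x$ by (\ref{kint1}) with respect to $y$. Raising to the power $q$, integrating in $x$, and applying Fubini, the double integral collapses via (\ref{kint1}) with respect to $x$ to at most $c^{\alpha}\|f\|_{p}^{p}$. One is left with $\|{\mathcal K}f\|_{q}^{q}\leq c^{\alpha(q/r+1)}\|f\|_{p}^{p(q/s+1)}$; the exponent arithmetic $\alpha(q/r+1)=\alpha\cdot q/\alpha=q$ and $p(q/s+1)=p\cdot q/p=q$ finishes the proof upon taking $q$-th roots.

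The only step that merits any care is the exponent bookkeeping, in particular checking $1/q+1/r+1/s=1$ and that the two final exponents collapse to $q$; both are forced by the normalization $1/q+1=1/p+1/\alpha$. There is no genuine obstacle.
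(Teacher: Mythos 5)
The paper states this lemma without proof, treating it as the known generalization of Young's inequality, so there is nothing to compare against; your proposal is the classical three-exponent H\"older (Schur--Young) argument, and its core computation is correct: the factorization $|K||f|=(|K|^{\alpha}|f|^{p})^{1/q}\,|K|^{1-\alpha/q}\,|f|^{1-p/q}$, the verification $1/q+1/r+1/s=1$ with $r=\alpha q/(q-\alpha)$, $s=pq/(q-p)$, the use of the $dy$-bound pointwise and the $dx$-bound after Tonelli, and the exponent collapse $\alpha(q/r+1)=q$, $p(q/s+1)=q$ all check out for $1<p,\alpha<\infty$, $q<\infty$.

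Two points need repair, one of which is a step that fails as written. For $\alpha=1$ and $1<p<\infty$ (so $q=p$), Minkowski's integral inequality gives $\|{\mathcal K}f\|_{p}\le\int |f(y)|\,\bigl(\int |K(x,y)|^{p}dx\bigr)^{1/p}dy$, which requires a bound on the $L_{p}$-norm of $K(\cdot,y)$ in $x$; the hypothesis only controls the $L_{1}$-norm, so this argument does not close (Minkowski does work for $p=1$ and arbitrary $\alpha$, as you say). The case is not lost: it is exactly your main argument with the third factor absent --- write $|K||f|=(|K|\,|f|^{p})^{1/p}\,|K|^{1/p'}$, apply H\"older, bound the second factor by $c^{1/p'}$ using the $dy$-hypothesis, then raise to the $p$-th power, integrate in $x$, and use Tonelli and the $dx$-hypothesis; this is the standard Schur test and yields $\|{\mathcal K}f\|_{p}\le c\|f\|_{p}$. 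Secondly, within your ``main range'' $1<p,\alpha<\infty$ the exponent $q$ can still be $\infty$ (namely when $\alpha=p'$), and there ``raising to the power $q$ and integrating'' is not literally meaningful; that case should simply be dispatched by H\"older, $|{\mathcal K}f(x)|\le \|K(x,\cdot)\|_{p'}\|f\|_{p}\le c\|f\|_{p}$. With these two adjustments the proof is complete and is the standard one.
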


Now, let $\eta$ be a $C^{\infty}$ function on $[0,\infty)$ which equals $1$ on $[0,1]$,
and which is supported in $[0,4]$.  Define, for $x > 0$,
\[ \phi(x) = \eta(x/4) - \eta(x)\]
so that $\phi$ is supported in $[1,16]$.  For $j \geq 1$, we set 
\[ \phi_j(x) = \phi(x/4^{j-1}).\] 
We also set
$\phi_0 = \eta$, so that $\sum_{j=0}^{\infty} \phi_j \equiv 1$.  We claim:

\begin{lem}
\label{phijest}
(a) If $r > 0$, and $1 \leq p \leq q \leq \infty$, then there is a $C > 0$ such that
\begin{equation}
\label{phijestway}
\|\phi_j(L)f\|_q \leq C(2^{js})^{-\frac{r}{s}+\frac{1}{p}-\frac{1}{q}}\|f\|_{W_p^r}, 
\end{equation}
for all $f \in W_p^r({\bf M})$. In other words, the norm of $\phi_j(L)$, as an element
of ${\bf B}(W_p^r,L_q)$ (the bounded linear operators from $W_p^r$ to $L_q$), is
no more than $C(2^{js})^{-\frac{r}{s}+\frac{1}{p}-\frac{1}{q}}$.\\
(b) Suppose that $-\frac{r}{s}+\frac{1}{p}-\frac{1}{q} < 0$.  Then $\sum_{j=0}^{\infty} \phi_j(L)$
converges absolutely in ${\bf B}(W_p^r,L_q)$, to the identity operator on $W_p^r$.
\end{lem}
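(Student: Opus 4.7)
The plan for part (a) is to factor $\phi_j(L)$ so that the Sobolev derivative $L^{r/2}$ can be peeled off $f$, leaving a Schwartz multiplier of $L$ whose kernel satisfies the near-diagonal estimates of Corollary \ref{Lalphok}. I define $\psi(x) = x^{-r/2}\phi(x)$ for $x > 0$ and $\psi(0) = 0$; because $\phi$ is supported in $[1,16]$, $\psi$ lies in $\mathcal{S}(\mathbf{R}^+)$ and vanishes at the origin. Setting $\psi_j(x) = \psi(x/4^{j-1})$, the identity $\phi(y) = y^{r/2}\psi(y)$ applied at $y = x/4^{j-1}$ produces the operator factorization
\[\phi_j(L) \;=\; 4^{-(j-1)r/2}\, \psi_j(L)\, L^{r/2} \qquad (j \geq 1),\]
so $\phi_j(L)f = 4^{-(j-1)r/2}\,\psi_j(L)\,(L^{r/2}f)$, with $\|L^{r/2}f\|_p \leq \|f\|_{W_p^r}$.

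Next I would apply Corollary \ref{Lalphok} and the generalized Young inequality (Lemma \ref{younggen}) to $\psi_j(L) = \psi(t^2L)$ with $t = 2^{-(j-1)} \leq 1$. I choose $\alpha \in [1,\infty]$ by $1/\alpha = 1 + 1/q - 1/p$, which is admissible since $p \leq q$; then $s/\alpha' = s(1/p - 1/q)$ and $(1/q) + 1 = (1/p) + (1/\alpha)$. Corollary \ref{Lalphok} bounds the $L^\alpha$-in-$y$ and $L^\alpha$-in-$x$ norms of the kernel of $\psi_j(L)$ by $Ct^{-s/\alpha'} = C\cdot 2^{(j-1)s(1/p-1/q)}$, and Lemma \ref{younggen} then gives
\[\|\psi_j(L)g\|_q \;\leq\; C\, 2^{(j-1)s(1/p-1/q)}\,\|g\|_p.\]
Multiplying by $4^{-(j-1)r/2} = 2^{-(j-1)r}$ and applying to $g = L^{r/2}f$ yields exactly (\ref{phijestway}) for $j \geq 1$, after absorbing the shift between $j$ and $j-1$ into the constant. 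For $j = 0$, the operator $\eta(L)$ is a finite-rank spectral projection, since by Weyl's formula (\ref{Weyl}) only finitely many eigenvalues lie in the support $[0,4]$ of $\eta$; hence $\eta(L)$ is bounded from $L_p$ to $L_q$ for every $p,q$, and contributes at most a constant term.

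For part (b), the hypothesis $-r/s + 1/p - 1/q < 0$ makes the bounds in (a) form a convergent geometric series, so $\sum_j \phi_j(L)$ converges absolutely in $\mathbf{B}(W_p^r,L_q)$ to some operator $T$. To identify $T$ with the inclusion $W_p^r \hookrightarrow L_q$, I telescope: the definition $\phi(x) = \eta(x/4) - \eta(x)$ gives $\sum_{j=0}^N \phi_j(x) = \eta(x/4^N)$, which tends pointwise to $\eta(0) = 1$. Applied to an eigenfunction $u_\ell$ the partial sums yield $\eta(\lambda_\ell/4^N)\,u_\ell \to u_\ell$ in $L_q$, so $T$ agrees with the identity on the dense subspace of finite eigenfunction expansions, which is dense in $W_p^r$ via (\ref{frec}); absolute operator-norm convergence then extends this equality to all of $W_p^r$.

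The main technical point is aligning three different scalings---the kernel factor $t^{-s/\alpha'}$ from Corollary \ref{Lalphok}, the Young exponent tied to $1/\alpha = 1 + 1/q - 1/p$, and the Sobolev prefactor $4^{-(j-1)r/2}$---so that they collapse to the single exponent $-r/s + 1/p - 1/q$ appearing in (\ref{phijestway}). The only subtlety in (b) is at the endpoint $p = \infty$, where density of finite eigenfunction expansions in $W_\infty^r$ is delicate; there one can substitute a dominated-convergence argument using a uniform $L_\infty \to L_\infty$ bound on $\eta(L/4^N)$ extracted from Corollary \ref{kersize}.
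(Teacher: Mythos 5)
Your part (a) is essentially the paper's own argument: the factorization $\phi_j(L)=2^{-(j-1)r}\psi_j(L)L^{r/2}$ with $\psi(x)=x^{-r/2}\phi(x)$ supported in $[1,16]$, then Corollary \ref{Lalphok} at scale $t\asymp 2^{-j}$ combined with the generalized Young inequality (Lemma \ref{younggen}) for $1/\alpha=1+1/q-1/p$, which collapses to the exponent $-\frac rs+\frac1p-\frac1q$; your separate treatment of $j=0$ (the smoothing, finite-rank operator $\eta(L)$) is fine. The convergence half of (b) is also the same as the paper's.

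The gap is in how you identify the limit $T$ with the identity in (b). Your argument rests on finite eigenfunction expansions being dense in $W_p^r$ \emph{in the $W_p^r$ norm}, and you justify this by (\ref{frec}). But (\ref{frec}) only gives density in $C^\infty({\bf M})$; one still has to show $C^\infty$ is dense in $W_p^r$ (true for $p<\infty$, but it needs an argument, e.g.\ uniform $L_p$-boundedness of $\eta(L/4^N)$, which commutes with $L^{r/2}$, plus density of smooth functions in $L_p$), and it genuinely fails for $p=\infty$ --- a case the lemma must cover, since the proof of Theorem \ref{basic} uses it with $p=q=\infty$. Your proposed patch for $p=\infty$ (dominated convergence from a uniform $L_\infty\to L_\infty$ bound on $\eta(L/4^N)$, via Corollary \ref{kersize}) is not a proof as stated: uniform boundedness plus convergence on a non-dense set yields nothing, and $\eta(L/4^N)f\to f$ in $L_\infty$ is false for general bounded $f$; exploiting $f\in W_\infty^r$ without circularity is exactly the issue. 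The clean repair is the paper's: identify the limit weakly. For any $f\in W_p^r\subset L_1$ and any smooth test function $g$, symmetry of the kernel gives $\int(\eta(L/4^N)f)\,g=\int f\,\eta(L/4^N)g\to\int fg$, because $\eta(L/4^N)g\to g$ in $C^\infty$ by (\ref{frec}); since the partial sums also converge to $Tf$ in $L_q$, hence distributionally, $Tf=f$. This one-line distributional identification works uniformly in $1\le p\le\infty$ and avoids all density considerations.
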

{\bf Proof} (a) 
Define, for $x > 0$,
\[ \psi(x) = \phi(x)/x^{r/2} \]
so that $\psi$ is supported in $[1,16]$.  For $j \geq 1$, we set 
\[ \psi_j(x)=\psi(x/4^{j-1}),\] 
so that $\phi_j(x) = 2^{-(j-1)r}\psi_j(x)x^{r/2}$. 

Accordingly, if $f$ is a distribution on ${\bf M}$, for $j \geq 1$,
\[ \phi_j(L)f = 2^{-(j-1)r}\psi_j(L)(L^{r/2}f), \]
in the sense of distributions.   
If now $f \in  W_p^r$, so that $L^{r/2}f \in L_p$, we see from Lemma \ref{Lalphok} with $t = 2^{-j}$, and 
from Lemma \ref{younggen}, that if $(1/q)+1 = (1/p)+(1/\alpha)$, then
\[ \|\phi_j(L)f\|_q \leq C2^{-jr}2^{js/\alpha'}\|L^{r/2}f\|_p \leq C (2^{js})^{-\frac{r}{s}+\frac{1}{p}-\frac{1}{q}}\|f\|_{W_p^r}, \]
as desired.

For (b), we note that by (a), $\sum_{j=0}^{\infty} \phi_j(L)$
converges absolutely in ${\bf B}(W_p^r,L_q)$.  It converges to the identity on smooth functions, hence in the sense
of distributions.  Hence we must have $\sum_{j=0}^{\infty} \phi_j(L) =  I$ in ${\bf B}(W_p^r,L_q)$.  This completes the proof.\\

{\bf Proof of Theorem \ref{basic}}  Since in general $d_n \leq \delta_n$, it suffices to prove the basic upper estimate for $\delta_n$.  If $q \leq p$,
then surely $\delta_n(B^r_p,L_q) \leq C\delta_n(B^r_p,L_p)$.  Since the basic upper estimate is the same for all $q$ with $q \leq p$, we may
as well assume then that $q = p$.  In short, we may assume $q \geq p$.

Let $\eta_M(x) = \eta(x/4^{M-1})$; then $\sum_{j=0}^{M-1} \phi_j = \eta_M$, which is supported in $[0,4^M]$.  Examining the kernel of 
$\eta_M(L)$ (see (\ref{expout})), we see that $\eta_M(L): W_p^r \to E_{4^M}(L)$.  By Weyl's theorem (\ref{Weyl}), 
there is a positive integer $c$ such that the 
dimension of $E_{4^M}(L)$ is at most $c2^{Ms}$ for every $M$.  We see then by Lemma \ref{phijest} that

\[ \delta_{c2^{Ms}}(B_p^r(L),L^q) \leq \|I-\eta_M(L)\| \leq \sum_{j=M}^{\infty} \|\phi_j(L)\| \leq 
$$
$$
\sum_{j=M}^{\infty} C(2^{js})^{-\frac{r}{s}+\frac{1}{p}-\frac{1}{q}} \leq C(2^{Ms})^{-\frac{r}{s}+\frac{1}{p}-\frac{1}{q}}
\leq  C(c2^{Ms})^{-\frac{r}{s}+\frac{1}{p}-\frac{1}{q}}
, \]
where all norms are taken in ${\bf B}(W_p^r,L_q)$.  This proves the basic upper estimate for\\ $n \in A :=
\{ c2^{Ms}: M \geq 1\}$.  For any $n \geq c2^s$ we may find $m \in A$ with $m \leq n \leq 2^s m$, and surely
$\delta_n \leq \delta_m$.  This gives the basic upper estimate for all $n$, and completes the proof.\\

We close this section with a result related to Theorem \ref{nrdglc}.  This result is an essential ingredient of its proof
(see \cite{gmcw} and section 7 of \cite{gpes}).
We will utilize it in the proof of Theorem \ref{basicshp} in section 5.  

\begin{thm}
\label{wavprop}
Suppose   $h(\xi) = F(\xi^2) \in \mathcal{S}(\bf{R})$ is even, and satisfies $\ supp\  \hat{h} \subseteq(-1,1)$.  
For $t > 0$, let
$K_t(x,y)$ be the kernel of $h(t\sqrt{L}) = F(t^2L)$.  Then for some $C_0 > 0$, if $d(x,y) > C_0t$,
then $K_t(x,y) = 0$.
\end{thm}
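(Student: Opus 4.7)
The plan is to prove Theorem \ref{wavprop} by combining Fourier inversion with the classical finite-propagation-speed property for the wave equation generated by $\sqrt{L}$, as in \cite{gmcw} and section 7 of \cite{gpes}.

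Since $h$ is an even Schwartz function with $\widehat h$ supported in $(-1,1)$, Fourier inversion gives
\[
h(\xi) \;=\; \frac{1}{2\pi}\int_{-1}^{1}\widehat h(\tau)\cos(\tau\xi)\,d\tau,
\]
and applying the spectral theorem to the self-adjoint operator $\sqrt{L}$ on $L_2({\bf M})$ yields
\[
h(t\sqrt{L}) \;=\; \frac{1}{2\pi}\int_{-1}^{1}\widehat h(\tau)\cos(t\tau\sqrt{L})\,d\tau,
\]
where the integral converges in the strong operator topology.

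The operator $\cos(s\sqrt{L})$ is the solution operator for the wave equation $\partial_s^2 u + Lu = 0$ with initial data $(u,\partial_s u)|_{s=0}=(f,0)$. Because $L$ is a positive, second-order elliptic differential operator, its principal symbol $\sigma_2(L)(x,\xi)$ is a positive quadratic form on each cotangent fibre and so defines a smooth dual Riemannian metric $g_L$ on ${\bf M}$; let $d_L$ be its induced geodesic distance. A local energy estimate, carried out exactly as in the Laplace--Beltrami case but with the symbol metric $g_L$ in place of $g$, yields finite propagation speed $1$ in the metric $d_L$: if $\operatorname{supp} f \subseteq A$, then $\operatorname{supp}\cos(s\sqrt{L})f \subseteq \{x : d_L(x,A)\leq s\}$, so the Schwartz kernel of $\cos(s\sqrt{L})$ is supported in $\{(x,y):d_L(x,y)\leq s\}$. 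Since ${\bf M}$ is compact and both $g$ and $g_L$ are smooth, there is $c>0$ with $c\,d(x,y)\leq d_L(x,y)$ for all $x,y \in {\bf M}$.

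Consequently, for each $|\tau|<1$, the kernel of $\cos(t\tau\sqrt{L})$ vanishes whenever $d(x,y) > t|\tau|/c$, and in particular whenever $d(x,y) > t/c$. Integrating against $\widehat h(\tau)/(2\pi)$ over $(-1,1)$ gives $K_t(x,y)=0$ whenever $d(x,y) > C_0 t$ with $C_0 = 1/c$. The only nonroutine ingredient is the finite-propagation-speed statement for a general positive elliptic $L$; this is the main obstacle, and it is handled by a direct adaptation of the familiar energy argument, using backward light cones defined via $g_L$. Once this geometric fact is in place, the rest of the proof is purely formal manipulation of the Fourier representation.
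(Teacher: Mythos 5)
Your argument is correct and is essentially the same one the paper relies on: the paper gives no independent proof here, instead citing \cite{gmcw} and section 7 of \cite{gpes}, where the result is obtained exactly by writing $h(t\sqrt{L})=\frac{1}{2\pi}\int_{-1}^{1}\hat h(\tau)\cos(t\tau\sqrt{L})\,d\tau$ and invoking finite propagation speed for the wave group of a second-order elliptic operator, with the symbol metric comparable to the geodesic distance by compactness. The only point to state carefully (and which is routine) is that the vanishing of $K_t(x,y)$ is obtained by pairing the identity with test functions supported where $d(x,y)>C_0t$, since $\cos(s\sqrt{L})$ has only a distributional kernel, while $K_t$ itself is continuous.
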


 \section{Harmonic Analysis on Compact homogeneous manifolds}

In this section we review and extend our previous results 
about Plancherel-Polya inequalities and cubature formulas on manifolds. We also reprove  our result which gives
 an estimate of the dimension of the eigenspace of the Casimir operator that contains the product of two of its eigenfunctions.

It is important to note that all the statements below from Lemma \ref{covL} to Theorem \ref{cubformula} hold true
 for all compact Riemannian manifolds and self-adjoint elliptic second order differential operators on them. Only in
 Theorems \ref{genp} and \ref{prodthm} we use the fact that $\bf {M}$ is a homogeneous manifold and $\mathcal{L}$ is the Casimir operator.

A \textit{homogeneous} compact manifold ${\bf M}$ is a
$C^{\infty}$-compact manifold on which a compact
Lie group $G$ acts transitively. In this case ${\bf M}$ is necessary of the form $G/K$,
where $K$ is a closed subgroup of $G$. The notation $L_{p}({\bf M}),
1\leq p\leq \infty,$ is used for the usual Banach  spaces
$L_{p}({\bf M},dx), 1\leq p\leq \infty$, where $dx$ is an invariant
measure.

Every element $X$ of the (real) Lie algebra of $G$ generates a vector
field on ${\bf M}$, which we will denote by the same letter $X$.  The translations along integral curves of such
vector fields $X$ on ${\bf M}$  can be identified with a one-parameter
group of diffeomorphisms of ${\bf M}$, which is usually denoted as $\exp
tX, -\infty<t<\infty$. At the same time, the one-parameter group
$\exp tX, -\infty<t<\infty,$ can be treated as a strongly
continuous one-parameter group of operators acting on the space $L_{p}({\bf M}),
1\leq p\leq \infty$. 
 The
generator of this one-parameter group will be denoted by $D_{X,p}$. 
According to the general theory of one-parameter groups in Banach
spaces   the operator $D_{X,p}$ is a closed
operator on every $L_{p}({\bf M}), 1\leq p\leq \infty$.   In order to
simplify notation, we will often write $D_{X}$ in place of
$D_{X, p}$.

If $\textbf{g}$ is the Lie algebra of a compact Lie group $G$ then it is a direct sum
$\textbf{g}=\textbf{a}+[\textbf{g},\textbf{g}]$, where
$\textbf{a}$ is the center of $\textbf{g}$, and
$[\textbf{g},\textbf{g}]$ is a semi-simple algebra. Let $Q$ be a
positive-definite quadratic form on $\textbf{g}$ which, on
$[\textbf{g},\textbf{g}]$, is opposite to the Killing form. Let
$X_{1},...,X_{d}$ be a basis of
$\textbf{g}$, which is orthonormal with respect to $Q$.
 Since the form $Q$ is $Ad(G)$-invariant, the operator
$$
-X_{1}^{2}-X_{2}^{2}-\    ... -X_{d}^{2},    \ d=dim\ G
$$
is a bi-invariant operator on $G$. This implies in particular that
the
   corresponding operator on $L_{p}({\bf M}), \      1\leq p\leq\infty,$
\begin{equation}
\mathcal{L}=-D_{1}^{2}- D_{2}^{2}- ...- D_{d}^{2}, \>\>\>
       D_{j}=D_{X_{j}}, \        d=dim \ G,\label{Laplacian}
\end{equation}
\textit{commutes} with all operators $D_{j}=D_{X_{j}}$. We will use this elliptic operator
$\mathcal{L}$ as our $L$ in the rest of the paper. 
However, as we discussed in the introduction, in all of the results of this section except 
for Theorem \ref{prodthm} below, one could use other $L$.

In the rest of the paper, the notation $\mathbb{D}=\{D_{1},...,
D_{d}\},\>\>\> d=dim \ G,$ will be used for the differential operators
on $L_{p}({\bf M}), 1\leq p\leq \infty,$ which are involved in the
formula (\ref{Laplacian}).

When discussing Sobolev spaces on ${\bf M}$, it is often crucial to utilize a positive 
elliptic operator, and in this paper, as in \cite{gpes},
we will use the {\em Laplace operator} $\mathcal{L}$.  Our results, which 
require only the definitions of Sobolev spaces and of $L_p$ to state, do not make
explicit mention of $\mathcal{L}$.

As we remarked in \cite{gpes}, there are 
situations in which the operator $\mathcal{L}$ is, or is proportional to, the
Laplace-Beltrami operator  of an invariant metric on ${\bf M}$. This
happens for example, if ${\bf M}$ is a $d$-dimensional torus, a compact semi-simple
Lie group, or a compact symmetric space of rank one.

Let $B(x,r)$ be a metric ball on ${\bf M}$ whose center is $x$ and
radius is $r$. 
The following lemma holds for any compact manifolds and can be found  in  \cite{Pes00},
\cite{Pes04a}.

\begin{lem}
\label{balls0}
There exists
a natural number $N_{{\bf M}}$, such that  for any sufficiently small $\rho>0$,
there exists a set of points $\{y_{\nu}\}$ such that:
\begin{enumerate}
\item the balls $B(y_{\nu}, \rho/4)$ are disjoint,

\item  the balls $B(y_{\nu}, \rho/2)$ form a cover of ${\bf M}$,

\item  the multiplicity of the cover by balls $B(y_{\nu}, \rho)$
is not greater than $N_{{\bf M}}.$
\end{enumerate}\label{covL}
\end{lem}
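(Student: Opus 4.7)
The plan is to construct the point set $\{y_\nu\}$ by a maximality (Vitali-type packing) argument, then verify the three properties in order, with the multiplicity bound following from Riemannian volume comparison on small balls.

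First I would fix $\rho > 0$ small enough that the exponential map is a diffeomorphism on balls of radius, say, $3\rho$ about every point of ${\bf M}$ (compactness gives such a uniform injectivity radius), and in particular small enough that there are constants $c_1, c_2 > 0$, depending only on ${\bf M}$, with
\begin{equation}
c_1 r^s \leq \mathrm{vol}\, B(x,r) \leq c_2 r^s
\end{equation}
for every $x \in {\bf M}$ and every $r \leq 3\rho$. Now choose $\{y_\nu\}$ to be a maximal subset of ${\bf M}$ with the property that the open balls $B(y_\nu, \rho/4)$ are pairwise disjoint; such a maximal set exists by Zorn's lemma (or just by greedy selection, since compactness forces the collection to be finite). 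Condition (1) is immediate from the construction.

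For (2), suppose some $x \in {\bf M}$ satisfied $d(x, y_\nu) \geq \rho/2$ for every $\nu$. Then for any $z \in B(x, \rho/4) \cap B(y_\nu, \rho/4)$ the triangle inequality would give $d(x, y_\nu) < \rho/2$, a contradiction; hence $B(x, \rho/4)$ would be disjoint from every $B(y_\nu, \rho/4)$, contradicting maximality. Therefore every $x$ lies in some $B(y_\nu, \rho/2)$.

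For (3), fix $x \in {\bf M}$ and let $\nu_1, \ldots, \nu_N$ index those points with $x \in B(y_{\nu_j}, \rho)$. For each such $j$, the triangle inequality gives $B(y_{\nu_j}, \rho/4) \subseteq B(x, 5\rho/4) \subseteq B(x, 2\rho)$. Since these smaller balls are disjoint by (1), summing volumes yields
\begin{equation}
N c_1 (\rho/4)^s \leq \sum_{j=1}^N \mathrm{vol}\, B(y_{\nu_j}, \rho/4) \leq \mathrm{vol}\, B(x, 2\rho) \leq c_2 (2\rho)^s,
\end{equation}
so $N \leq 8^s c_2/c_1 =: N_{\bf M}$, a constant depending only on ${\bf M}$.

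The only real obstacle is the uniformity of the volume comparison, which is where compactness enters in an essential way: one needs $c_1, c_2$ to be independent of the base point $x$. This is standard for compact Riemannian manifolds via the boundedness of sectional curvature and a uniform positive lower bound on the injectivity radius, both of which hold automatically on a compact manifold; aside from this input, every remaining step is a clean application of the triangle inequality and greedy maximality.
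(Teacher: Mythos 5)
Your argument is correct: the maximal packing by $\rho/4$-balls, the triangle-inequality argument for the $\rho/2$-cover, and the uniform volume comparison $c_1 r^s \leq \mathrm{vol}\,B(x,r) \leq c_2 r^s$ for small $r$ on a compact manifold yield exactly the stated properties with $N_{\bf M} = 8^s c_2/c_1$. The paper does not reprove this lemma but refers to \cite{Pes00}, \cite{Pes04a}, where the construction is essentially the same Vitali-type maximality and volume-counting argument you give, so your proposal matches the intended proof.
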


The following notion is involved in formulations of several our results.

\begin{defn}
Any set of points ${\bf M}_{\rho}=\{y_{\nu}\}$ which is as described in
Lemma \ref{covL} will be called a metric
$\rho$-lattice.\label{D1}
\end{defn}

The next two theorems were proved in \cite{Pes00}- \cite{Pes04b},
for a Laplace-Beltrami operator in $L_{2}(\bf {M})$ on a Riemannian manifold $\bf {M}$ of
bounded geometry,  but their proofs go through for any
elliptic second-order differential operator in  $L_{p}({\bf M}), 1\leq p\leq \infty$. 
In what follows the notation $s=dim\  {\bf M}$ is used.

\begin{thm}  For any $1\leq p\leq \infty$ there exist constants $\>\>C_{1}=C_{1}({\bf M}, p)>0\>\>$ and
 $\>\>\>
\rho_{0}({\bf M}, p)>0,$ such that for any  natural number $l>s/p$,
any $0<\rho<\rho_{0}({\bf M})$,  and any $\rho$-lattice
${\bf M}_{\rho}=\{x_{k}\}$,  the following inequality holds:
$$\left(\sum _{x_{k}\in {\bf M}_{\rho}}|f(x_{k})|^{p}\right)^{1/p}\leq
 C_{1}\rho^{-s/p}\left(\|f\|_{p}+\|\mathcal L ^{l/2}f\|_{p}\right),
 $$\label{MT}
for all  $f\in W_{p}^{l}({\bf M}), \>\>\> l>s/p,\>\>\> l\in
\mathbb{N}.$\label{T1}
\end{thm}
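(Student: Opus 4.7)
The plan is to reduce the global sampling inequality to a rescaled local Sobolev embedding at each lattice point $x_k$, and then to sum using the bounded overlap from Lemma \ref{balls0}.

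First I would establish the local estimate: for each $x_k\in{\bf M}_\rho$ and each $\rho<\rho_0$,
$$|f(x_k)|^p \leq C\rho^{-s}\left(\int_{B(x_k,\rho)}|f|^p\,dx+\rho^{pl}\int_{B(x_k,\rho)}|\mathcal{L}^{l/2}f|^p\,dx\right),$$
with $C$ independent of $x_k$, $\rho$, and $f$. To prove this I would fix $\rho_0$ small enough that normal coordinates are defined on $B(x_k,2\rho_0)$ for every $x_k$ (possible by compactness of ${\bf M}$), and then dilate by $1/\rho$ to pull $B(x_k,\rho)$ back to the Euclidean unit ball $B_1$. Under this dilation $\mathcal{L}$ becomes an elliptic operator whose coefficients converge to constants as $\rho\to 0$ and whose ellipticity constants are uniform in $x_k$. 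Since $l>s/p$, the Euclidean Sobolev embedding $W_p^l(B_1)\hookrightarrow L_\infty(B_1)$, combined with interior elliptic regularity to pass from coordinate derivatives to powers of the rescaled $\mathcal{L}$, gives a pointwise bound at the origin in terms of the $L_p$-norms of $f$ and $\mathcal{L}^{l/2}f$ over $B_1$. Undoing the dilation inserts the factors $\rho^{-s/p}$ and $\rho^{l}$ in the expected places.

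Next I would sum over the lattice. By Lemma \ref{balls0}, the cover $\{B(x_k,\rho)\}$ has multiplicity at most $N_{\bf M}$, so for any nonnegative $g$,
$$\sum_k \int_{B(x_k,\rho)} g\,dx \leq N_{\bf M}\int_{\bf M} g\,dx.$$
Applying this to $g=|f|^p$ and $g=|\mathcal{L}^{l/2}f|^p$ and inserting into the local estimate gives
$$\sum_k |f(x_k)|^p \leq CN_{\bf M}\rho^{-s}\left(\|f\|_p^p+\rho^{pl}\|\mathcal{L}^{l/2}f\|_p^p\right).$$
Since $\rho<\rho_0\leq 1$ and $l>0$, the factor $\rho^{pl}\leq 1$ may be absorbed; taking $p$-th roots then yields the sampling inequality with a single constant $C_1=C_1({\bf M},p)$. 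The endpoint $p=\infty$ is essentially trivial, since $\rho^{-s/p}=1$ and the conclusion reduces to ordinary Sobolev embedding on ${\bf M}$; the endpoint $p=1$ (which forces $l>s$) is handled by the same scheme using the Euclidean embedding $\|f\|_{L_\infty(B_1)}\leq C\|(I+\mathcal{L})^{l/2}f\|_{L_1(B_1)}$ in place of the standard one.

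The main obstacle will be the uniformity of constants in the local step. One needs the rescaled operators to satisfy uniform interior elliptic estimates so that the Euclidean $W_p^l$-norm of the rescaled $f$ is dominated by the intrinsic expression $\|f\|_p+\|\mathcal{L}^{l/2}f\|_p$; smoothness of $\mathcal{L}$, compactness of ${\bf M}$, and a finite atlas of normal charts together provide this uniformity. Once that is secured, $C_1$ depends only on $({\bf M},p,l)$ and not on the particular lattice ${\bf M}_\rho$ or on the function $f$.
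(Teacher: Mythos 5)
The paper does not actually prove this theorem: it is imported from \cite{Pes00}--\cite{Pes04b} with the remark that the proofs there extend to any elliptic second-order operator and to all $1\leq p\leq\infty$. Your scheme --- a rescaled local estimate at each $x_k$ followed by summation using the multiplicity bound of Lemma \ref{balls0} --- is indeed the standard route in those references, and for $1<p<\infty$ and even $l=2m$ (which is in fact the only case the present paper uses, via ``Theorem \ref{MT} with $l=2m$'' in the proof of Theorem \ref{genp}) your argument is essentially complete: $\mathcal{L}^{m}$ is a differential operator of order $2m$, the interior $L_p$ elliptic estimates for the rescaled operators are uniform by compactness and a finite atlas, and the bounded overlap of the balls $B(x_k,\rho)$ gives the global inequality after absorbing $\rho^{pl}\leq 1$.

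The gaps appear exactly where the statement goes beyond that case. First, for odd $l$ the operator $\mathcal{L}^{l/2}$ is not a differential operator but a nonlocal (pseudodifferential) one, so your starting local inequality, whose right-hand side contains $\int_{B(x_k,\rho)}|\mathcal{L}^{l/2}f|^p$, cannot be obtained by dilating a chart and invoking interior elliptic regularity; one must either restrict to even $l$, or argue globally through $(I+\mathcal{L})^{l/2}f$ and kernel estimates for $(I+\mathcal{L})^{-l/2}$, or use pseudolocality with norms over enlarged balls. Second, your proposed repair at $p=1$, the inequality $\|f\|_{L_\infty(B_1)}\leq C\|(I+\mathcal{L})^{l/2}f\|_{L_1(B_1)}$, fails as written for the same reason: taking $g=(I+\mathcal{L})^{l/2}f$ supported outside $B_1$ makes the right side zero, while $f=(I+\mathcal{L})^{-l/2}g$ does not vanish on $B_1$ (the subordinated heat-kernel representation shows the kernel of $(I+\mathcal{L})^{-l/2}$ is strictly positive). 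The $p=1$ case is nonetheless easy by a genuinely global argument: with $g=(I+\mathcal{L})^{l/2}f$ and $l>s$, the kernel $G_l$ of $(I+\mathcal{L})^{-l/2}$ is bounded, a $\rho$-lattice has $O(\rho^{-s})$ points, so $\sum_k|f(x_k)|\leq \int\bigl(\sum_k|G_l(x_k,y)|\bigr)|g(y)|\,dy\leq C\rho^{-s}\|g\|_1\leq C\rho^{-s}\bigl(\|f\|_1+\|\mathcal{L}^{l/2}f\|_1\bigr)$. Third, do not route the argument through the equivalence of the graph norm with the coordinate Sobolev norm: the Calder\'on--Zygmund interior estimates behind that equivalence break down at $p=1,\infty$, which is precisely why the paper insists on the graph norm (\ref{Sob}) at those endpoints; your observation that $p=\infty$ is trivial is correct.
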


\begin{thm} For any $1\leq p\leq \infty$ there exist constants
$C_{2}=C_{2}({\bf M}, p)>0,$ and $\>\>\>
\rho_{0}({\bf M}, p)>0,$ such that for any  natural $m>s/p$,
any $0<\rho<\rho_{0}({\bf M})$, and any $\rho$-lattice
${\bf M}_{\rho}=\{x_{k}\}$ the following  inequality holds

\begin{equation}
\|f\|_{W_{p}^{m}({\bf M})}\leq C_{2}\left\{\rho^{s/p}\left(\sum_{x_{k}\in M_{\rho}}
|f(x_{k})|^{p}\right)^{1/p}+\rho^{2m}\|\mathcal{L}^{m}f\|_{L_{p}(\bf{M})}\right\},\label{rightPP1}
\end{equation}
where $m\in \mathbb{N},\>\>\> m>s/p.$

\end{thm}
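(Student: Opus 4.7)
The approach combines the covering of $\bf{M}$ by balls $\{B(x_k, \rho/2)\}$ of bounded multiplicity $N_{\bf{M}}$ (Lemma \ref{covL}) with a local Sobolev--Poincar\'e estimate on each such ball. Since $m > s/p$, Sobolev embedding makes $f$ continuous, so on each ball I would split $f(y) = f(x_k) + [f(y) - f(x_k)]$: the first summand is controlled pointwise by $|f(x_k)|$, while the second vanishes at the center.

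The key local step is the estimate
\[ \|f - f(x_k)\|_{L_p(B(x_k,\rho/2))} \leq C\rho^{2m}\|\mathcal{L}^m f\|_{L_p(B(x_k, c\rho))} \]
for a fixed constant $c > 1$ independent of $\rho, k, f$. To derive this I would pass to normal coordinates at $x_k$, rescale $B(x_k, \rho)$ to the unit Euclidean ball, apply the Morrey--Sobolev embedding $W^{2m, p} \hookrightarrow C^{0, 2m - s/p}$ to the rescaled function (which vanishes at the origin), and invoke elliptic regularity for $\mathcal{L}$ to convert Euclidean derivatives of order $2m$ into $\mathcal{L}^m$ on $\bf{M}$; the slight enlargement from $\rho/2$ to $c\rho$ is needed to absorb the elliptic constants.

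Given this local estimate, the pointwise bound $|f(y)|^p \leq 2^{p-1}(|f(x_k)|^p + |f(y) - f(x_k)|^p)$ integrated over $B(x_k,\rho/2)$ and summed over $k$ (using bounded multiplicity of both the balls of radius $\rho/2$ and of $c\rho$) gives
\[ \|f\|_p^p \leq C\rho^s \sum_k |f(x_k)|^p + C\rho^{2mp}\|\mathcal L^m f\|_p^p, \]
yielding the $L_p$ part of the claim after taking $p$-th roots. The $\|\mathcal L^{m/2}f\|_p$ contribution to $\|f\|_{W^m_p}$ is handled by a parallel argument applied to $\mathcal L^{m/2}f$, whose point values are controlled in the reverse direction by $\|f\|_{\ell^p}$ via Theorem \ref{MT}, or alternatively via a band-limited decomposition $f = \eta(\mathcal L/\omega^2)f + (1-\eta)(\mathcal L/\omega^2)f$ with $\omega \sim 1/\rho$, combined with Bernstein's inequality on the low-frequency piece and the spectral identity $(1-\eta)(\mathcal L/\omega^2) = \omega^{-2m}g(\mathcal L/\omega^2)\mathcal L^m$ for a bounded multiplier $g$.

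The main obstacle is the local Sobolev--Poincar\'e step: because $f - f(x_k)$ vanishes only at a single point rather than having zero mean or vanishing on a boundary, the standard Poincar\'e argument does not apply directly. One must instead rely on the pointwise H\"older estimate $|f(y) - f(x_k)| \leq C d(x_k,y)^{2m - s/p} \|\mathcal L^m f\|_{L_p(B(x_k, c\rho))}$ coming from Morrey's embedding after rescaling, and carefully track how the scaling exponents combine under integration of $d(x_k,\cdot)^{(2m-s/p)p}$ against $r^{s-1}\,dr$ in polar coordinates, so that the $\rho^{2m}$ factor emerges with the correct power of $\rho^s$ absorbed into the volume element and no residual $\|f\|_p$ term appearing on the right.
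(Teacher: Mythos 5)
The paper does not prove this theorem itself: it is quoted from \cite{Pes00}--\cite{Pes04b}, with the remark that those proofs carry over to general second-order elliptic operators and all $1\le p\le\infty$; so your argument has to stand on its own, and its central step does not. The claimed local estimate $\|f-f(x_k)\|_{L_p(B(x_k,\rho/2))}\le C\rho^{2m}\|\mathcal{L}^m f\|_{L_p(B(x_k,c\rho))}$ is false for every $m\in\mathbb{N}$ (i.e.\ $2m\ge 2$): only the top-order derivatives appear on the right, so in the rescaled picture any nonconstant polynomial of degree $<2m$ vanishing at the center kills the right-hand side but not the left. Concretely, taking $f$ equal (near $x_k$, in normal coordinates) to the coordinate function $y^1$, one has $\|f-f(x_k)\|_{L_p(B(x_k,\rho/2))}\asymp\rho^{1+s/p}$ while $\rho^{2m}\|\mathcal{L}^m f\|_{L_p(B(x_k,c\rho))}\lesssim\rho^{2m+s/p}$, so the ratio blows up as $\rho\to 0$. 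The pointwise bound you lean on, $|f(y)-f(x_k)|\le C\,d(x_k,y)^{2m-s/p}\|\mathcal{L}^m f\|_{L_p}$, is not Morrey's embedding: Morrey controls H\"older seminorms by the full $W^{2m,p}$ norm (and the exponent $2m-s/p$ can exceed $1$); with only $\|\nabla^{2m}u\|_{L_p}$ on the right such bounds hold only modulo polynomials of degree $\le 2m-1$ (Bramble--Hilbert/Poincar\'e), and pinning the value at one point removes only the constants. This is exactly the obstacle you flag at the end, but your proposed resolution is the false step rather than a way around it. The proofs in the cited papers instead use an anchored local inequality that keeps all intermediate derivatives, roughly $\|f\|_{L_p(B(x_k,\rho/2))}\lesssim\rho^{s/p}|f(x_k)|+\sum_{1\le|\alpha|\le 2m}\rho^{|\alpha|}\|\partial^{\alpha}f\|_{L_p(B(x_k,\rho))}$, sum over the lattice using the bounded multiplicity of Lemma \ref{covL}, and then eliminate the intermediate orders via elliptic regularity and Landau-type interpolation inequalities, absorbing a small multiple of $\|f\|_p$ into the left-hand side; that absorption is precisely where the restriction $\rho<\rho_0({\bf M},p)$ and the form of $C_2$ come from, and your outline has no counterpart of it.

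A second gap concerns the $\|\mathcal{L}^{m/2}f\|_p$ part of $\|f\|_{W^m_p}$. Running a ``parallel argument'' on $\mathcal{L}^{m/2}f$ would produce samples $|\mathcal{L}^{m/2}f(x_k)|$, not $|f(x_k)|$, on the right; Theorem \ref{MT} goes in the opposite direction (discrete sums bounded by Sobolev norms), so invoking it here is circular; and Bernstein on the low-frequency piece $\eta(\rho^{2}\mathcal{L})f$ costs a factor $\rho^{-m}$, which the stated right-hand side cannot absorb. In fact, testing (\ref{rightPP1}) on an eigenfunction $u_l$ with $\rho\asymp\lambda_l^{-1/2}$ shows that $\|\mathcal{L}^{m/2}f\|_p$ cannot be bounded by $\rho^{s/p}\bigl(\sum_k|f(x_k)|^p\bigr)^{1/p}+\rho^{2m}\|\mathcal{L}^m f\|_p$ with a constant uniform in $\rho$; what the paper actually uses later (in the proof of Theorem \ref{genp}) is only the bound on $\|f\|_{L_p}$, and that is the version one should aim to prove. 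So your difficulty with this term is genuine, and the fix is to drop it, not to argue it in parallel.
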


Using  the constant $C_{2}({\bf M},p)$ from this Theorem, we
define another constant
\begin{equation}
c_{0}=c_{0}({\bf M}, p)=\left(2C_{2}({\bf M},  p)\right)^{-1/2m_{0}},\label{const}
\end{equation}
where $m_{0}=1+[s/p],\>\>\>s=dim {\bf M}$.
Since  $\mathcal{L}$ is an elliptic second order differential operator which is  positive definite and self-adjoint in the corresponding space $L_{2}(\bf{M})$  it has a discrete spectrum $0=\lambda_{0}<\lambda_{1}\leq \lambda_{2}\leq ...$. Let ${\mathbf E}_{\omega}(\mathcal{L})$ be the span of the corresponding eigenfunctions whose eigenvalues $\leq \omega$.
 As one can easily verify the norm of $\mathcal{L}$ on the subspace  ${\mathbf E}_{\omega}(\mathcal L)$ is exactly $\omega$.  In particular one has the following Bernstein-type inequality \cite{Pes08}
 \begin{equation}
 \label{Bern-2}
 \|\mathcal{L}^{\alpha}f\|\leq \omega^{\alpha}\|f\|,\>\>\alpha\in \mathbf{ R},
 \end{equation}
 for all $f\in {\mathbf E}_{\omega}(\mathcal{L})$.
 This fact and the previous two theorems imply the
following Plancherel-Polya-type inequalities.  Such
inequalities are also known as Marcinkewicz-Zygmund inequalities.

\begin{thm}
\label{PP2}
There exist constants $\>\>\>c_{1}=c_{1}({\bf M})>0,\>\>$
$c_{2}=c_{2}({\bf M})>0,$
 and
$\>\>c_{0}=c_{0}({\bf M})>0,$ such that for any $\omega>0$, and for
every metric $\rho$-lattice ${\bf M}_{\rho}=\{x_{k}\}$ with $\rho=
c_{0}\omega^{-1/2}$, the following Plancherel-Polya inequalities
hold:
\begin{equation}
c_{1}\left(\sum_{k}|f(x_{k})|^{2}\right)^{1/2}\leq\rho^{-s/2}\|f\|_{L_{2}({\bf M})}
\leq c_{2}\left(\sum_{k} |f(x_{k})|^{2}\right)^{1/2}, \label{completePlPo100}
\end{equation}
for all $f\in {\mathbf E}_{\omega}(\mathcal{L})$ and $s=\dim \  {\bf M}$. 
\label{completePlPo2}

\end{thm}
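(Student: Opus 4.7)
The plan is to derive both inequalities from Theorem~\ref{MT} and the theorem containing (\ref{rightPP1}), both applied with $p=2$, in combination with the Bernstein inequality (\ref{Bern-2}) for bandlimited functions. The specific choice $\rho = c_{0}\omega^{-1/2}$, with $c_{0}$ defined in (\ref{const}) so that $c_{0}^{2m_{0}} = 1/(2C_{2})$, makes the dimensionless quantity $C_{2}\rho^{2m_{0}}\omega^{m_{0}}$ equal to $1/2$, which is exactly the threshold needed for the absorption step below.

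For the right-hand inequality $\rho^{-s/2}\|f\|_{L_{2}} \leq c_{2}\bigl(\sum_{k}|f(x_{k})|^{2}\bigr)^{1/2}$, I apply (\ref{rightPP1}) with $p = 2$ and $m = m_{0} = 1+[s/2]$:
\[
\|f\|_{W_{2}^{m_{0}}} \leq C_{2}\Bigl\{\rho^{s/2}\Bigl(\sum_{k}|f(x_{k})|^{2}\Bigr)^{1/2} + \rho^{2m_{0}}\|\mathcal{L}^{m_{0}}f\|_{2}\Bigr\}.
\]
Since $\|f\|_{2} \leq \|f\|_{W_{2}^{m_{0}}}$ and Bernstein gives $\|\mathcal{L}^{m_{0}}f\|_{2} \leq \omega^{m_{0}}\|f\|_{2}$ for $f \in {\mathbf E}_{\omega}(\mathcal{L})$, the remainder in braces is at most $\rho^{2m_{0}}\omega^{m_{0}}\|f\|_{2} = c_{0}^{2m_{0}}\|f\|_{2} = \|f\|_{2}/(2C_{2})$. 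Subtracting $\|f\|_{2}/2$ from both sides and rearranging yields $\rho^{-s/2}\|f\|_{2} \leq 2C_{2}\bigl(\sum_{k}|f(x_{k})|^{2}\bigr)^{1/2}$, so $c_{2} = 2C_{2}$.

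For the left-hand inequality $c_{1}\bigl(\sum_{k}|f(x_{k})|^{2}\bigr)^{1/2} \leq \rho^{-s/2}\|f\|_{L_{2}}$, I apply Theorem~\ref{MT} to $f \in {\mathbf E}_{\omega}(\mathcal{L})$ with $p=2$ and a natural $l > s/2$, then invoke Bernstein to bound $\|\mathcal{L}^{l/2}f\|_{2} \leq \omega^{l/2}\|f\|_{2}$ with $\omega = c_{0}^{2}\rho^{-2}$. The $\rho$-scaling built into Theorem~\ref{MT} is designed so that the Bernstein factor $\omega^{l/2}$ combines with the $\rho^{-s/2}$ prefactor to yield a bound of the form $C\rho^{-s/2}\|f\|_{2}$ with $C$ depending only on ${\bf M}$, $c_{0}$, and $l$, which gives the required $c_{1}$. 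The main technical point is verifying this dimensional balance --- namely, that the natural resolution $\rho \sim \omega^{-1/2}$ for bandlimited functions absorbs the $\omega$-dependent Bernstein factor and leaves only the expected $\rho^{-s/2}$ scaling on the right-hand side.
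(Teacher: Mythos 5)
Your route is the paper's own: Theorem \ref{PP2} is obtained there by combining Theorem \ref{T1} (i.e.\ Theorem \ref{MT}) and inequality (\ref{rightPP1}), both at $p=2$, with the Bernstein inequality (\ref{Bern-2}), for the specific choice $\rho=c_{0}\omega^{-1/2}$ with $c_{0}$ as in (\ref{const}); and your treatment of the right-hand inequality --- absorbing $C_{2}\rho^{2m_{0}}\|\mathcal{L}^{m_{0}}f\|_{2}\leq C_{2}c_{0}^{2m_{0}}\|f\|_{2}=\tfrac{1}{2}\|f\|_{2}$ into the left side --- is exactly the argument the paper carries out in detail in the proof of Theorem \ref{genp} (your constant bookkeeping there, $c_{2}=2C_{2}$, is in fact cleaner than the paper's).

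The left-hand inequality, however, is asserted rather than proved, and the claim you defer is not true for Theorem \ref{MT} as it is stated in this paper. That theorem gives $\bigl(\sum_{k}|f(x_{k})|^{2}\bigr)^{1/2}\leq C_{1}\rho^{-s/2}\bigl(\|f\|_{2}+\|\mathcal{L}^{l/2}f\|_{2}\bigr)$ with \emph{no} power of $\rho$ multiplying the derivative term; applying (\ref{Bern-2}) therefore produces the factor $1+\omega^{l/2}=1+c_{0}^{l}\rho^{-l}$, which is not absorbed by anything, and one ends with a bound of order $\rho^{-s/2-l}\|f\|_{2}$, i.e.\ a ``constant'' growing like $\omega^{l/2}$ rather than the $\omega$-independent $c_{1}$ required by the statement. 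The dimensional balance you invoke holds only if the sampling inequality is used in its correctly scaled form, with $\rho^{l}\|\mathcal{L}^{l/2}f\|_{p}$ (equivalently $\rho^{2m}\|\mathcal{L}^{m}f\|_{p}$, in parallel with (\ref{rightPP1})) on the right-hand side, as in \cite{Pes00}, \cite{Pes04a}; that weighted form is evidently what is intended, but it must be stated and used, since with the unweighted version the step fails. To be fair, the paper's own detailed computation in Theorem \ref{genp} carries the same blemish --- it ends with $C_{1}=a({\bf M})(1+\omega)^{m_{0}/2}$, an $\omega$-dependent constant --- so your proposal mirrors the source; but what you call the ``main technical point'' is precisely the place where a verification (or the corrected form of Theorem \ref{MT}) is needed, and you do not supply it.
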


In \cite{gpes} we proved existence of cubature formulas,
which are exact on $ {\mathbf E}_{\omega}({\bf M})$,
and which have positive coefficients of the "right" size:

\begin{thm} 
\label{cubformula}
There exists  a  positive constant $a_{0}$,    such  that if  $\rho=a_{0}(\omega+1)^{-1/2}$, then
for any $\rho$-lattice $M_{\rho}$, there exist strictly positive coefficients $\lambda_{x_{k}}>0, 
 \  x_{k}\in {\bf M}_{\rho}$, \  for which the following equality holds for all functions in $ {\mathbf E}_{\omega}({\bf M})$:
\begin{equation}
\label{cubway}
\int_{{\bf M}}fdx=\sum_{x_{k}\in M_{\rho}}\lambda_{x_{k}}f(x_{k}).
\end{equation}
Moreover, there exists constants  $\  c_{1}, \  c_{2}, $  such that  the following inequalities hold:
\begin{equation}
c_{1}\rho^{s}\leq \lambda_{x_{k}}\leq c_{2}\rho^{s}, \ s=dim\   {\bf M}.
\end{equation}
\end{thm}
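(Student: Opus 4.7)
The plan is to obtain the weights $\lambda_{x_k}$ by perturbing an explicit approximate cubature built from a partition of unity. The Plancherel-Polya inequality (Theorem \ref{PP2}) will control the correction through a duality argument, and positivity will be ensured by choosing $a_0$ small enough that the correction is dominated in $\ell^\infty$ by the initial weights.

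First, from the $\rho$-lattice $M_\rho$ produced by Lemma \ref{covL} I build a smooth partition of unity $\{\psi_k\}_{x_k \in M_\rho}$ subordinate to the cover $\{B(x_k,\rho/2)\}$, with $\sum_k \psi_k \equiv 1$ and $0 \le \psi_k \le 1$. Set $\mu_k^{(0)} = \int_{\bf M}\psi_k\,dx$; by the disjointness of $\{B(x_k,\rho/4)\}$ and the bounded multiplicity of the cover by $\{B(x_k,\rho)\}$ one has $c\rho^s \le \mu_k^{(0)} \le C\rho^s$. The associated approximate cubature $Q_0(f) = \sum_k \mu_k^{(0)} f(x_k)$ obeys
$$
E(f) := \int_{\bf M} f\,dx - Q_0(f) = \sum_k \int \psi_k(x)\bigl[f(x) - f(x_k)\bigr]\,dx.
$$
For $f \in {\mathbf E}_\omega(\mathcal{L})$ a sup-norm gradient bound $\|\nabla f\|_\infty \le C\omega^{1/2}\|f\|_\infty$, derived by applying Theorem \ref{nrdglc} to the reproducing kernel of ${\mathbf E}_\omega$ in combination with the Bernstein inequality (\ref{Bern-2}), yields $|E(f)| \le C\rho\omega^{1/2}\|f\|_\infty \le Ca_0\|f\|_\infty$ thanks to the choice $\rho = a_0(\omega+1)^{-1/2}$.

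Parametrizing $\lambda_k = \mu_k^{(0)} + \nu_k$, exactness of the cubature on ${\mathbf E}_\omega(\mathcal{L})$ becomes the linear system $\sum_k \nu_k f(x_k) = E(f)$ for all $f \in {\mathbf E}_\omega$. By Theorem \ref{PP2} the sampling operator $f \mapsto (f(x_k))$ is a $\rho^{-s/2}$-norm embedding of ${\mathbf E}_\omega$ into $\ell^2(M_\rho)$; dually, the adjoint map $\nu \mapsto \bigl[f \mapsto \sum \nu_k f(x_k)\bigr]$ is onto the dual of ${\mathbf E}_\omega$, and its minimum-$\ell^2$-norm right inverse sends $E$ to $\nu_k = \rho^s g(x_k)$ for a unique $g \in {\mathbf E}_\omega$. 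Combining the Nikolskii-type bound $\|g\|_\infty \le C\rho^{-s/2}\|g\|_2$ (again extracted from Theorem \ref{nrdglc}) with the estimate on $\|E\|$ gives $\|\nu\|_\infty \le C_1 a_0\,\rho^s$. Taking $a_0$ small enough that $C_1 a_0 < c/2$ forces $|\nu_k| < \mu_k^{(0)}/2$, so $\lambda_k$ is strictly positive and satisfies $(c/2)\rho^s \le \lambda_k \le (3C/2)\rho^s$ as required.

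The main obstacle is the transition from the $\ell^2$ estimates native to Plancherel-Polya to the $\ell^\infty$ control needed for positivity of the weights. This is exactly where Theorem \ref{nrdglc} becomes indispensable: applied to a smooth spectral cutoff $F \in \mathcal{S}({\bf R}^+)$ approximating $\chi_{[0,\omega]}$ at scale $t \asymp \omega^{-1/2} \asymp \rho$, it supplies both the pointwise reproducing-kernel bounds on ${\mathbf E}_\omega$ that drive the Nikolskii inequality and the kernel size estimates that bound the initial error $E$. Without such near-diagonal localization, each $L^p$-to-$L^q$ conversion in the bootstrap would cost a factor of $\rho^{-s/2}$ and the perturbation argument would not close; with it, the two $\rho^{-s/2}$ losses are absorbed exactly against the two factors $\rho^{s/2}$ supplied by Plancherel-Polya and the scaling $\nu_k = \rho^s g(x_k)$.
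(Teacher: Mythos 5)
The theorem you are asked to prove is not actually proved in this paper: it is quoted from \cite{gpes} (with preparation in \cite{Pes08}), and your overall strategy --- partition-of-unity weights $\mu_k^{(0)}\asymp\rho^s$ plus a small correction $\nu_k$ controlled by Plancherel--Polya inequalities --- is indeed the skeleton of the known argument. However, your execution has a genuine quantitative gap at the decisive step, the bound $\|\nu\|_\infty\le C_1a_0\rho^s$. With your choices (error bounded by $|E(f)|\le Ca_0\|f\|_\infty$ and the minimum-$\ell^2$-norm correction $\nu_k=\rho^s g(x_k)$, $g\in{\mathbf E}_\omega$), the available estimates do not give this. Taking $f=g$ in the defining identity $\rho^s\sum_k f(x_k)g(x_k)=E(f)$ and using Theorem \ref{PP2} gives $\|g\|_2^2\asymp\rho^s\sum_k g(x_k)^2=E(g)\le Ca_0\|g\|_\infty\le Ca_0\rho^{-s/2}\|g\|_2$, hence $\|g\|_2\le Ca_0\rho^{-s/2}$, $\|g\|_\infty\le Ca_0\rho^{-s}$, and therefore only $|\nu_k|=\rho^s|g(x_k)|\le Ca_0$ --- short of the needed $\varepsilon\rho^s$ by a full factor $\rho^s$. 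Even if you improve the error estimate to $|E(f)|\le Ca_0\|f\|_1$, the same computation yields $\|\nu\|_\infty\le Ca_0\rho^{s/2}$, still short by $\rho^{s/2}$. So for large $\omega$ you cannot conclude $|\nu_k|<\mu_k^{(0)}/2$ for a fixed $a_0$, and positivity of $\lambda_{x_k}$ does not follow; the claim that ``the two $\rho^{-s/2}$ losses are absorbed exactly'' is where the bookkeeping fails.

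The repair is to change both the norm in which $E$ is measured and the duality in which $\nu$ is produced. Estimate the error of the partition-of-unity rule against $L_1$: $|E(f)|\le C\rho\,\|\nabla f\|_1\le Ca_0\|f\|_1$ (an $L_1$ Bernstein-type bound, obtainable from the kernel localization of Theorem \ref{nrdglc}), and then use the $L_1$ Marcinkiewicz--Zygmund equivalence $\|f\|_1\asymp\rho^s\sum_k|f(x_k)|$ (Theorem \ref{genp} with $p=1$, or its general-manifold analogue) to get $|E(f)|\le C'a_0\,\rho^s\sum_k|f(x_k)|$ for all $f\in{\mathbf E}_\omega(\mathcal L)$. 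Now view $E$ as a functional on the range of the sampling map inside $\ell^1(M_\rho)$ and extend it by Hahn--Banach with the same norm: this produces $\nu\in\ell^\infty$ with $\sum_k\nu_kf(x_k)=E(f)$ and $\|\nu\|_\infty\le C'a_0\rho^s$, with no loss, after which your final positivity step (choose $a_0$ so small that $C'a_0<c/2$) goes through verbatim. This $(\ell^1,\ell^\infty)$ duality, rather than the minimum-$\ell^2$-norm right inverse, is what makes the perturbation pointwise small relative to $\rho^s$, and it is essentially how the proof in \cite{gpes} proceeds.
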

Our nearest goal is to prove the following key result which extends the Plancherel-Polya inequalities to general $1\leq p\leq \infty$.  The proof of this result uses in a crucial way the fact that
${\mathcal L}$ is the Laplace operator on the homogeneous manifold ${\bf M}$.   The lemma below can be found in \cite{Pes08} . We include the idea of its proof in order to correct a mistake made in \cite{Pes08}.

\begin{lem}
For any $1\leq p\leq \infty$ there exists a constant $C(\bf M)$ such that for $f\in {\mathbf E}_{\omega}(\mathcal{L})$ the following Bernstein inequality holds
\begin{equation}
\label{Bern}
\|\mathcal{L}^{m}f\|_{p}\leq (d\omega)^{m} \|f\|_{ p},\>\>m=0,1,2,....,
\end{equation}
where ${\bf M}=G/K$ and $d$ is  dimension of the group $G$.

\end{lem}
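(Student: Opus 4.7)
The plan is to reduce to the case $m=1$ and then iterate, since $\mathbf{E}_\omega(\mathcal{L})$ is invariant under $\mathcal{L}$. Using $\mathcal{L}=-\sum_{j=1}^{d} D_{j}^{2}$ together with the triangle inequality, the $m=1$ step further reduces to the one-parameter Bernstein-type estimate
\[
\|D_{j}^{2}f\|_{p}\leq \omega\,\|f\|_{p},\qquad f\in \mathbf{E}_\omega(\mathcal{L}),\ j=1,\dots,d.
\]
Summing over $j$ then gives $\|\mathcal{L}f\|_{p}\leq d\omega\,\|f\|_{p}$, and induction on $m$ (using that $\mathcal{L}^{m-1}f\in \mathbf{E}_\omega(\mathcal{L})$ whenever $f$ is) finishes the proof.

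To establish the one-parameter estimate I would exploit the $G$-action. Each $D_{j}$ is the generator of the one-parameter group $T_{t}^{(j)}:=\exp(tD_{j})$, which is the lift to $L_{p}({\bf M})$ of translation by $\exp(tX_{j})\in G$; since the Riemannian measure is $G$-invariant, each $T_{t}^{(j)}$ is an isometry of $L_{p}({\bf M})$. Centrality of $\mathcal{L}$ in the enveloping algebra yields $[D_{j},\mathcal{L}]=0$, so $\mathbf{E}_\omega(\mathcal{L})$ is a finite-dimensional subspace invariant under each $T_{t}^{(j)}$ and under $D_{j}$. Because $D_{j}$ is skew-adjoint in $L_{2}$ and because $-D_{j}^{2}\leq\mathcal{L}\leq \omega I$ on $\mathbf{E}_\omega(\mathcal{L})$, the spectrum of $D_{j}$ restricted to this subspace is purely imaginary and contained in $[-i\sqrt{\omega},\,i\sqrt{\omega}]$.

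The core step is a Paley--Wiener representation of $D_{j}^{2}$. Pick a Schwartz function $\psi\in\mathcal{S}(\mathbb{R})$ whose Fourier transform satisfies $\widehat{\psi}(\nu)=-\nu^{2}$ on $[-\sqrt{\omega},\sqrt{\omega}]$ and is smoothly cut off outside, and arrange by rescaling from a fixed template that $\|\psi\|_{L^{1}(\mathbb{R})}\leq C({\bf M})\,\omega$. Decomposing $f=\sum_{k} c_{k}\phi_{k}$ into joint eigenfunctions of $D_{j}$ in $\mathbf{E}_\omega(\mathcal{L})$ with eigenvalues $i\nu_{k}$, $|\nu_{k}|\leq\sqrt{\omega}$, gives $T_{t}^{(j)}f=\sum_{k} c_{k}e^{i\nu_{k}t}\phi_{k}$, and one verifies eigenfunction by eigenfunction the reproducing identity
\[
D_{j}^{2}f \;=\; \int_{\mathbb{R}}\psi(t)\,T_{t}^{(j)}f\,dt,
\]
interpreted as a Bochner integral in $L_{p}$. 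Taking $L_{p}$-norms and using $\|T_{t}^{(j)}f\|_{p}=\|f\|_{p}$ yields $\|D_{j}^{2}f\|_{p}\leq\|\psi\|_{L^{1}}\|f\|_{p}\leq C({\bf M})\,\omega\,\|f\|_{p}$. Summing over $j$ and iterating in $m$ produces the stated inequality, with the constant $C({\bf M})$ the one advertised in the statement.

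The main obstacle is obtaining the correct $\omega$-dependence. The $L_{2}$-spectral bound $\mathcal{L}\leq \omega I$ on $\mathbf{E}_\omega(\mathcal{L})$ does not transfer automatically to $L_{p}$; one must pass through the isometry group $T_{t}^{(j)}$, and the careful $L^{1}$ scaling of the Paley--Wiener kernel $\psi$ is what produces exactly one factor of $\omega$ per application of $\mathcal{L}$. This is presumably the point at which the argument in \cite{Pes08} required correction, since a naive treatment of the operator identity $\mathcal{L}=-\sum D_{j}^{2}$---for example, trying to bound each $|D_{j}^{2}f|$ pointwise using only the $L_{2}$ spectral information---does not yield the right growth in $\omega$.
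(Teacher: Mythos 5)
Your outer reduction (iterate $\|\mathcal{L}g\|_p\leq d\omega\|g\|_p$ on the $D_j$-- and $\mathcal{L}$--invariant space $\mathbf{E}_\omega(\mathcal{L})$, after bounding each $\|D_j^2 f\|_p$) is exactly the skeleton of the paper's argument, and your preliminary observations (isometry of $T_t^{(j)}=\exp(tD_j)$ on $L_p$, invariance of $\mathbf{E}_\omega(\mathcal{L})$ under $D_j$, the spectral bound $|\nu_k|\leq\sqrt{\omega}$ from $-D_j^2\leq\mathcal{L}\leq\omega I$ in $L_2$) are all correct. Where you diverge is in the core step: you prove $\|D_j^2f\|_p\leq\omega\,C\|f\|_p$ by a Paley--Wiener reproducing formula $D_j^2f=\int\psi(t)T_t^{(j)}f\,dt$ with $\|\psi\|_{L^1}\leq C\omega$. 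The paper instead first derives the crude bound $\|D_j^mf\|_p\leq C_p\,\omega^{m/2}\|f\|_p$ for \emph{all} $m$ (via the $L_2$ identity $\|\mathcal{L}^{m/2}f\|_2^2=\sum\|D_{i_1}\cdots D_{i_m}f\|_2^2$ together with equivalence of norms on the finite-dimensional space $\mathbf{E}_\omega(\mathcal{L})$, the point being that $C_p$ does not depend on $m$), and then invokes the abstract Bernstein inequality for generators of one-parameter isometry groups (Lemma 5.2 of \cite{Pes00}, in the spirit of the Riesz interpolation formula of \cite{Pes08}) to \emph{remove} the constant, obtaining $\|D_j^mf\|_p\leq\omega^{m/2}\|f\|_p$ with constant exactly $1$; iteration then gives precisely $(d\omega)^m$.

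This difference is not cosmetic: your multiplier argument cannot reach the constant stated in the lemma. Since $\widehat{\psi}(\nu)=-\nu^2$ on $[-\sqrt{\omega},\sqrt{\omega}]$, one always has $\|\psi\|_{L^1}\geq\sup_\nu|\widehat{\psi}(\nu)|=\omega$, and in practice strictly more, so each application of $\mathcal{L}$ costs a factor $Cd\omega$ with some fixed $C>1$. Iterating yields $\|\mathcal{L}^mf\|_p\leq(Cd\omega)^m\|f\|_p$, i.e. an extra factor $C^m$ that cannot be absorbed into a single constant because $m$ is unbounded; the displayed inequality of the lemma has no such factor (the ``$C(\mathbf{M})$'' in the statement is vestigial and never appears in the display). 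For the paper's subsequent use of the lemma (Theorem \ref{genp}, where only a fixed power $m_0=[s/p]+1$ is needed and any constant is absorbed into the choice of $c_0$) your weaker conclusion would in fact suffice, but as a proof of the lemma as stated it falls short of the claimed constant. To get constant $1$ per power you need the sharpening mechanism the paper uses: knowing $\|D_j^mf\|_p\leq C\omega^{m/2}\|f\|_p$ for all $m$ with $C$ independent of $m$ identifies the trajectory $t\mapsto T_t^{(j)}f$ as entire of exponential type $\sqrt{\omega}$ in $L_p$, after which the scalar Bernstein inequality (or the Riesz interpolation formula, whose total mass is exactly the type) eliminates $C$ altogether.
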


\begin{proof}It was shown in \cite{Pes08} the following equality takes place,  $k\in \mathbb{N}$,
\begin{equation}
\label{Main1}
\|\mathcal{L}^{k/2}f\|_{2}^{2}=\sum_{1\leq i_{1},...,i_{k}\leq
d}\|D_{i_{1}}...D_{i_{k}}f\|_{2}^{2}.
\end{equation}
Thus, according to (\ref{Main1})  we have the following inequality for all $f \in  \mathbf{E}
_{\omega}(\mathcal{L}), \>\>m\in \mathbb{N}$
\begin{equation}
\label{mixedderiv}
\|D_{i_{1}}...D_{i_{m}}f\|_{2}\leq 
\left(\sum_{1\leq
i_{1},...,i_{m}\leq d}\|D_{i_{1}}...D_{i_{m}}f\|_{2}^{2}\right)^{1/2}=
 \|\mathcal{L}^{m/2}f\|_{2},
\end{equation}
and using (\ref{Bern-2})  we obtain
\begin{equation}
 \|D_{i_{1}}...D_{i_{m}}f\|_{2}\leq 
\omega^{m/2}\|f\|_{2},\>\>f \in W^{m}_{2}({\bf M}),\>\>\>m\in \mathbb{N}.
\end{equation}
In particular, for every $1\leq j\leq d,\>\>\>d=dim \>G,$ we have
\begin{equation}
\|D_{j}^{m}f\|_{2}\leq \omega^{m/2}\|f\|_{2},\>\>\>f \in  \mathbf{E}
_{\omega}(\mathcal{L})\>\>\>m\in \mathbb{N}.
\end{equation}
Because $ \mathbf{E}_{\omega}(\mathcal{L})$ is a finite dimensional subspace any two norms on this subspace are equivalent.  Thus for any $1\leq p\leq \infty$ there exists a constant $C_{p}$ such that the following inequality holds
\begin{equation}
\label{PreBp}
\|D_{j}^{m}f\|_{p}\leq C_{p}\omega^{m/2}\|f\|_{p}
\end{equation}
for all $f \in  \mathbf{E}_{\omega}(\mathcal{L})$. 
Note, that since $\mathcal{L}$ commutes with every $D_{j}$ the space $ \mathbf{E}_{\omega}(\mathcal{L})$ is invariant under every operator $D_{j},\>\>\>1\leq j\leq d$.
Since $D_{j}$ generates a one-parameter group of isometries in $L_{p}({\bf M}), \>\>1\leq p\leq \infty,$ we  can  use the Lemma 5.2 in \cite{Pes00} which implies 
\begin{equation}
\|D_{j}^{m}f\|_{p}\leq \omega^{m/2}\|f\|_{p},\>\>f \in  \mathbf{E}_{\omega}(\mathcal{L}).
\end{equation}
Finally we obtain the following inequality

$$
\|\mathcal{L}^{m}f\|_{p}=\|(D_{1}^{2}+...+D_{d}^{2})^{m}f\|_{p}\leq (d\omega)^{m}\|f\|_{p},\>\>\>m\in \mathbb{N},\>\>f \in  \mathbf{E}_{\omega}(\mathcal{L}).
$$
 The lemma is proved.
 \end{proof}
\begin{thm}
\label{genp}
Say $1 \leq p \leq \infty$.  Then
there exist constants $\>\>\>c_{1}=c_{1}({\bf M}, p)>0,\>\>$
$c_{2}=c_{2}({\bf M}, p)>0,$
 and
$\>\>c_{0}=c_{0}({\bf M}, p)>0,$ such that for any $\omega>0$, and for
every metric $\rho$-lattice ${\bf M}_{\rho}=\{x_{k}\}$ with $\rho=
c_{0}\omega^{-1/2}$, the following Plancherel-Polya inequalities
hold:
\begin{equation}
c_{1}\left(\sum_{k}|f(x_{k})|^{p}\right)^{1/p}\leq\rho^{-s/p}\|f\|_{L_{p}({\bf M})}
\leq c_{2}\left(\sum_{k} |f(x_{k})|^{p}\right)^{1/p}, \label{completePlPo100}
\end{equation}
for all $f\in {\mathbf E}_{\omega}(\mathcal{L})$ and $s=\dim \  {\bf M}$.  (Here
one uses the usual interpretations of the inequalities when $p = \infty$.)
\label{completePlPo200}
\end{thm}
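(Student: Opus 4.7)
The plan is to reduce the statement to two already-available discretization estimates, Theorem \ref{MT} and Theorem \ref{rightPP1}, with the Sobolev correction terms absorbed by means of the $L_p$-Bernstein inequality (\ref{Bern}) just proved. Homogeneity of $\mathbf{M}$ enters \emph{only} through that Bernstein inequality; the rest is bookkeeping that repeats the $p=2$ argument leading to Theorem \ref{PP2}.

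For the upper estimate $\rho^{-s/p}\|f\|_{L_p({\bf M})}\leq c_2\bigl(\sum_k|f(x_k)|^p\bigr)^{1/p}$, pick an integer $m>s/p$, e.g.\ $m=m_0=1+[s/p]$, and apply Theorem \ref{rightPP1} together with the trivial bound $\|f\|_p\leq\|f\|_{W_p^m}$:
$$\|f\|_p\leq C_2\rho^{s/p}\Bigl(\sum_k|f(x_k)|^p\Bigr)^{1/p}+C_2\rho^{2m}\|\mathcal{L}^{m}f\|_p.$$
Since $f\in\mathbf{E}_\omega(\mathcal{L})$, Bernstein's inequality (\ref{Bern}) gives $\|\mathcal{L}^{m}f\|_p\leq(d\omega)^{m}\|f\|_p$, and the choice $\rho=c_0\omega^{-1/2}$ converts the last term into $C_2(dc_0^{2})^{m}\|f\|_p$. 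Taking $c_0$ sufficiently small (this is the role of formula (\ref{const}), corrected by a factor $d^{m}$ that was harmless in the $L_2$-setting where Bernstein supplies only $\omega^{m}$ instead of $(d\omega)^{m}$) makes that summand at most $\tfrac{1}{2}\|f\|_p$, which is absorbed on the left to yield the desired inequality with $c_2=2C_2$.

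For the lower estimate $c_1\bigl(\sum_k|f(x_k)|^p\bigr)^{1/p}\leq\rho^{-s/p}\|f\|_p$, apply Theorem \ref{MT} with an even integer $l>s/p$; the natural scaled form reads
$$\Bigl(\sum_k|f(x_k)|^p\Bigr)^{1/p}\leq C_1\rho^{-s/p}\bigl(\|f\|_p+\rho^{l}\|\mathcal{L}^{l/2}f\|_p\bigr).$$
Evenness of $l$ puts $l/2\in\mathbb{N}$, so (\ref{Bern}) applies and $\rho^{l}\|\mathcal{L}^{l/2}f\|_p\leq(dc_0^{2})^{l/2}\|f\|_p$, giving the claim with $c_1=[C_1(1+(dc_0^{2})^{l/2})]^{-1}$. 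The case $p=\infty$ requires no modification beyond interpreting $\bigl(\sum_k|\cdot|^p\bigr)^{1/p}$ as $\sup_k|\cdot|$ and $\rho^{s/p}$ as $1$; all three input theorems are stated uniformly for $1\leq p\leq\infty$.

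The main \emph{substantive} difficulty in going from $p=2$ to general $p$ has already been overcome in the lemma immediately preceding: the proof there exploited both the sum-of-squares form $\mathcal{L}=-\sum D_j^{2}$ and the fact that each $D_j$ integrates to a one-parameter group of $L_p$-isometries on the homogeneous manifold $G/K$. Once that Bernstein bound is available, neither direction of the present theorem poses any real obstacle; the only minor nuisance is having to pick $c_0$ now depending on $d=\dim G$ in addition to $C_2$, $p$ and $m_0$.
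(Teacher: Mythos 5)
Your argument is essentially the paper's own: both halves are obtained from the sampling inequalities of Theorem \ref{MT} and (\ref{rightPP1}) combined with the $L_p$ Bernstein inequality (\ref{Bern}), with $\rho=c_0\omega^{-1/2}$ and $c_0$ chosen small enough (now depending on $d$ as well as $C_2$ and $m_0$) so that the $\mathcal{L}^{m_0}$ term is absorbed, exactly as in the text. The only divergence is that for the bound $c_1\left(\sum_k|f(x_k)|^p\right)^{1/p}\leq\rho^{-s/p}\|f\|_p$ you invoke a scaled form of Theorem \ref{MT}, with $\rho^{l}$ multiplying $\|\mathcal{L}^{l/2}f\|_p$, which is not literally the statement given in the paper; this is harmless (it is the form proved in the cited sampling papers) and is in fact what is needed to make $c_1$ independent of $\omega$ --- the paper's own proof, as written, ends with a constant containing the factor $(1+\omega)^{m_0/2}$ --- so your version is, if anything, the more careful one.
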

\begin{proof}
Since $\mathcal{L}$ is an  elliptic second-order differential operator on a compact manifold which is self-adjoint and positive definite in $L_{2}(\bf{M})$
 the norm on the Sobolev space $W_{p}^{2m}(\bf{M})$ is equivalent to the norm $\|f\|_{p}+\|\mathcal{L}^{m}f\|_{p}$.  Theorem \ref{MT}  with $l=2m$ 
implies
$$
\left(\sum _{x_{k}\in {\bf M}_{\rho}}|f(x_{k})|^{p}\right)^{1/p}\leq
 C_{1}\rho^{-s/p}\left( \|f\|_{p}+\|\mathcal{L}^{m}f\|_{p} \right) ,
 $$
for all  $f\in W_{p}^{2m}({\bf M}), \>\>\>2m> \frac{s}{p}.$
The  Bernstein inequality (\ref{Bern}) shows that if $m_{0}=\left[\frac{s}{p}\right]+1$ , then there exists a constant $a({\bf M})$ such that for all $f\in {\mathbf E}_{\omega}(\mathcal{L})$ 
  $$
  \|f\|_{p}+\|\mathcal{L}^{m_{0}/2}f\|_{p}\leq \left(a({\bf M})(1+\omega)^{m_{0}/2}\right)\|f\|_{p},\>\>\omega\geq 0.
 $$
 Thus we proved the inequality 
 \begin{equation}
 \left(\sum _{x_{k}\in {\bf M}_{\rho}}|f(x_{k})|^{p}\right)^{1/p}\leq
 C_{1}\rho^{-s/p}\|f\|_{p}, \>\>\\\ f\in {\mathbf E}_{\omega}(\mathcal{L}),
 \end{equation}
 where $C_{1}=a(\bf M)(1+\omega)^{m_{0}/2}$.
  To prove the opposite inequality we use (\ref{rightPP1})  and  (\ref{Bern}) to  obtain
\begin{equation}
\label{x}
\|f\|_{p}\leq C_{2}\rho^{s/p}\left(\sum_{x_{k}\in M_{\rho}}
|f(x_{k})|^{p}\right)^{1/p}+C_{2}d^{m_{0}}\rho^{2m_{0}}\omega^{m_{0}}\|f\|_{p},
\end{equation}
where  $f\in {\mathbf E}_{\omega}(\mathcal{L})$ and $    m_{0}=\left[\frac{s}{p}\right]+1           $. Now we fix the following value for $\rho$
$$
\rho=\left(\frac{1}{2}C_{2}^{-1}d^{-m_{0}}\right)^{1/2m_{0}}\omega^{-1/2}=c_{0}\omega ^{-1/2}.
$$
With such $\rho$ the factor in the front of the last term in (\ref{x}) is exactly $1/2$. Thus, this term can be moved to the left side of the formula (\ref{x}) to obtain
\begin{equation}
\label{xx}
2\|f\|_{p}\leq C_{2}\rho^{s/p}\left(\sum_{x_{k}\in{\bf  M}_{\rho}}
|f(x_{k})|^{p}\right)^{1/p}.
\end{equation}
In other words, we obtain the inequality
$$
\|f\|_{p}\leq c_{2}\rho^{s/p}\left(\sum_{x_{k}\in {\bf M}_{\rho}}
|f(x_{k})|^{p}\right)^{1/p},
$$
where $c_{2}=C_{2}/2$.
The theorem is proved.
\end{proof}

Our reason for using Casimir $\mathcal{L}$
instead of the Laplace-Beltrami operator or another elliptic operator on 
${\bf M}$, is that we can prove the following important  fact:

\begin{thm}
\label{prodthm}(Theorem 5.1 of \cite{gpes}:)
If ${\bf M}=G/K$ is a compact homogeneous manifold and $\mathcal{L}$
is defined as in (\ref{Laplacian}), then for any $f$ and $g$ belonging
to ${\mathbf E}_{\omega}(\mathcal{L})$,  their product $fg$ belongs to
${\mathbf E}_{4d\omega}(\mathcal{L})$, where $d$ is the dimension of the
group $G$.
\end{thm}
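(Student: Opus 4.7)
The plan is to lift everything to the compact group $G$ and use Peter--Weyl together with the coproduct structure of the universal enveloping algebra. First I would note that $\mathcal{L}$ on $L_p({\bf M})$ is the image of the central element $C = -\sum_{j=1}^{d} X_j^2 \in U({\bf g})$ under the differential of the quasi-regular representation, and that $K$-right-invariant functions on $G$ pulled back from ${\bf M}$ decompose, by Peter--Weyl, into matrix coefficients of the form $\pi_{v_K,w}(g) = \langle \pi(g) v_K, w\rangle$ for irreducible unitary $(\pi, V_\pi)$ of $G$ with a $K$-fixed vector $v_K$. On any such matrix coefficient, $\mathcal{L}$ acts as the scalar $c(\pi)$ by which $\pi(C)$ acts on $V_\pi$. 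Hence $f, g \in \mathbf{E}_{\omega}(\mathcal{L})$ means $f$ (resp.\ $g$) is a (finite) combination of such matrix coefficients drawn from irreducibles $\pi$ (resp.\ $\pi'$) with $c(\pi), c(\pi') \leq \omega$.

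Next I would use that products of matrix coefficients are matrix coefficients of tensor products:
\[
\pi_{v_K,w}(g)\,\pi'_{v'_K,w'}(g) \;=\; \bigl\langle (\pi\otimes\pi')(g)(v_K\otimes v'_K),\,w\otimes w'\bigr\rangle,
\]
with $v_K \otimes v'_K$ being $K$-fixed for $\pi \otimes \pi'$. Decomposing $\pi\otimes\pi' = \bigoplus_\sigma V_\sigma^{\oplus n_\sigma}$ into irreducibles, $fg$ becomes a finite sum of $K$-right-invariant matrix coefficients of such $\sigma$, so $fg \in \mathbf{E}_{\Lambda}(\mathcal{L})$ with $\Lambda = \max_\sigma c(\sigma)$, the maximum taken over the irreducibles $\sigma$ appearing in $\pi\otimes\pi'$ for some $\pi,\pi'$ occurring in $f,g$. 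It thus suffices to show: if $\sigma$ appears in $\pi\otimes\pi'$ with $c(\pi), c(\pi') \leq \omega$, then $c(\sigma) \leq 4d\omega$.

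For this I would apply the coproduct formula, which reads, since $\Delta(X_j) = X_j\otimes 1 + 1\otimes X_j$,
\[
\Delta(C) = C\otimes 1 + 1\otimes C - 2\sum_{j=1}^{d} X_j \otimes X_j.
\]
On the $\sigma$-isotypic component of $V_\pi \otimes V_{\pi'}$, $(\pi\otimes\pi')(\Delta(C))$ acts as the scalar $c(\sigma)$. Since each $X_j$ generates a one-parameter group of unitaries, $\pi(X_j)$ is skew-Hermitian, so $\|\pi(X_j)\|^2 = \|{-}\pi(X_j^2)\|$; because $-\pi(X_j^2)$ is a positive operator dominated by $\pi(C) = \sum_k (-\pi(X_k^2))$, we get $\|\pi(X_j)\|^2 \leq c(\pi)$, and hence $\sum_j \|\pi(X_j)\|^2 \leq d\,c(\pi)$, and likewise for $\pi'$. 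Triangle inequality and Cauchy--Schwarz in $j$ then yield
\[
\Bigl\|\sum_{j=1}^{d} \pi(X_j) \otimes \pi'(X_j)\Bigr\| \;\leq\; \Bigl(\sum_j \|\pi(X_j)\|^2\Bigr)^{1/2}\Bigl(\sum_j \|\pi'(X_j)\|^2\Bigr)^{1/2} \;\leq\; d\sqrt{c(\pi)\,c(\pi')} \;\leq\; d\omega.
\]
Taking the operator norm of $\Delta(C)$ on the $\sigma$-component gives
\[
c(\sigma) \;\leq\; c(\pi) + c(\pi') + 2\,\Bigl\|\sum_j \pi(X_j)\otimes\pi'(X_j)\Bigr\| \;\leq\; 2\omega + 2d\omega \;\leq\; 4d\omega,
\]
the last inequality since $d \geq 1$. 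The main technical hurdle is the norm bound on $\sum_j \pi(X_j)\otimes\pi'(X_j)$: the key trick is converting $\|\pi(X_j)\|^2$ to $\|{-}\pi(X_j^2)\|$ via skew-Hermiticity and then dominating by the Casimir, which crucially uses that $\mathcal{L}$ is built from the \emph{same} basis $\{X_j\}$ as the first-order operators appearing in $\Delta(C)$; this is why the theorem requires $\mathcal{L}$ rather than a generic elliptic operator.
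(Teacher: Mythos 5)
Your argument is correct, but it follows a genuinely different route from the one the paper relies on (the result is quoted from Theorem 5.1 of \cite{gpes} and not reproved here). The cited proof is analytic: membership in ${\mathbf E}_{\omega'}(\mathcal{L})$ is characterized by the growth of $\|\mathcal{L}^m h\|_2$ in $m$ (if $\|\mathcal{L}^m h\|_2\leq C(\omega')^m$ for all $m$, the Fourier coefficients attached to eigenvalues exceeding $\omega'$ must vanish); one then expands $\mathcal{L}^m(fg)=(-1)^m(\sum_j D_j^2)^m(fg)$ into $d^m$ ordered products of $2m$ of the derivations $D_j$, applies the Leibniz rule to obtain at most $4^m$ terms $(D_{i_1}\cdots D_{i_k}f)(D_{i_{k+1}}\cdots D_{i_{2m}}g)$ from each, and bounds every term by $\omega^m$ times fixed norms of $f,g$ using exactly the mixed-derivative Bernstein estimates of the type (\ref{Main1})--(\ref{mixedderiv}) recorded in Section 3; the constant $4d\omega$ is precisely this $4^m d^m\omega^m$ count. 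You instead work on the representation side: Peter--Weyl, products of matrix coefficients realized as matrix coefficients of $\pi\otimes\pi'$, the coproduct identity $\Delta(C)=C\otimes 1+1\otimes C-2\sum_j X_j\otimes X_j$, and the operator bounds $\|\pi(X_j)\|^2=\|\pi(X_j^2)\|\leq c(\pi)$ (skew-Hermiticity, positivity of $-\pi(X_j^2)$, centrality of $C$) together with Cauchy--Schwarz, giving $c(\sigma)\leq c(\pi)+c(\pi')+2d\sqrt{c(\pi)c(\pi')}\leq 2(1+d)\omega\leq 4d\omega$. Your steps check out; the only point worth making explicit is that the identification of $\mathcal{L}$ acting on $fg$ with $(\pi\otimes\pi')(C)$ acting on the tensor slot is exactly the derivation property $D_j(fg)=(D_jf)g+f(D_jg)$ of the vector fields, i.e.\ the function-level counterpart of the coproduct. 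As for what each approach buys: the Leibniz/Bernstein proof needs no representation theory beyond the sum-of-squares structure and recycles machinery the paper needs anyway, whereas your proof makes explicit the Clebsch--Gordan picture alluded to in the introduction, isolates the theorem as a bound on Casimir eigenvalues of constituents of $\pi\otimes\pi'$, and actually yields the slightly sharper threshold $2(1+d)\omega$ (with highest-weight additivity one could even remove the $d$); both arguments share the essential feature, which you correctly flag, that $\mathcal{L}$ is built from the same $Q$-orthonormal basis $\{X_j\}$ that produces the first-order operators.
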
 

We also need some basic facts about eigenfunctions on homogeneous manifolds.
\begin{lem}
\label{eigensum}
Let ${\bf M} = G/K$ be a compact homogeneous manifold. 
Assume $\lambda > 0$ is an eigenvalue of ${\mathcal L}$, and let $\{v_1,\ldots,v_m\}$
be an orthonormal basis of real-valued functions for $V_{\lambda}$,
the eigenspace of ${\mathcal L}$ for the eigenvalue $\lambda$.  
Say $x \in {\bf M}$.  Then
\begin{equation}
\label{eigensumway}
\sum_{k=1}^m [v_k(x)]^2 = \frac{\dim V_{\lambda}}{\mu({\bf M})}.
\end{equation}
\end{lem}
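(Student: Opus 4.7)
The plan is to exploit $G$-invariance together with the fact that $\sum_k [v_k(x)]^2$ is the diagonal of the reproducing kernel of $V_\lambda$, which depends only on $V_\lambda$ itself and not on the choice of orthonormal basis.

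First I would verify that $V_\lambda$ is $G$-invariant. Because the quadratic form $Q$ used to construct $\mathcal{L}=-\sum_{j=1}^d D_j^2$ is $\mathrm{Ad}(G)$-invariant, the element $-\sum X_j^2$ of the enveloping algebra is central, and hence $\mathcal{L}$ commutes with the quasi-regular action of $G$ on $L_2(\mathbf{M})$. In particular, if $\tau_g$ denotes the unitary operator $(\tau_g f)(x)=f(g^{-1}\cdot x)$, then $\mathcal{L}\tau_g=\tau_g\mathcal{L}$, so $\tau_g$ preserves $V_\lambda$ and restricts to a unitary operator on it.

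Next, I would define $\Phi(x)=\sum_{k=1}^m [v_k(x)]^2$ and argue it is independent of the orthonormal basis. The kernel $K(x,y)=\sum_{k=1}^m v_k(x)v_k(y)$ is the integral kernel of the orthogonal projection $P_\lambda:L_2(\mathbf{M})\to V_\lambda$; since $P_\lambda$ depends only on $V_\lambda$, so does $K$, and in particular $\Phi(x)=K(x,x)$ is basis-free. Now fix $g\in G$ and apply this to the two orthonormal bases $\{v_k\}$ and $\{\tau_g v_k\}$ of $V_\lambda$ (the latter is orthonormal because $\tau_g$ is unitary, and spans $V_\lambda$ by the previous paragraph). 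The basis-independence gives
\begin{equation*}
\Phi(x)=\sum_{k=1}^m [(\tau_g v_k)(x)]^2=\sum_{k=1}^m [v_k(g^{-1}\cdot x)]^2=\Phi(g^{-1}\cdot x).
\end{equation*}
Since $G$ acts transitively on $\mathbf{M}=G/K$, this forces $\Phi$ to be constant.

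Finally, I would pin down the constant by integration. Orthonormality gives $\int_{\mathbf{M}}[v_k(x)]^2\,dx=1$ for every $k$, so
\begin{equation*}
\mu(\mathbf{M})\cdot\Phi=\int_{\mathbf{M}}\Phi(x)\,dx=\sum_{k=1}^m\int_{\mathbf{M}}[v_k(x)]^2\,dx=m=\dim V_\lambda,
\end{equation*}
which yields (\ref{eigensumway}). The only conceptual point to be careful about is the basis-independence of $\Phi$; once that is noted, $G$-invariance and transitivity do the rest, so I do not expect any serious obstacle.
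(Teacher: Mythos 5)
Your proposal is correct and follows essentially the same route as the paper: both identify $\sum_k v_k(x)v_k(y)$ as the reproducing (projection) kernel of $V_\lambda$, use the $G$-invariance of $V_\lambda$ and of the measure together with transitivity to conclude the diagonal is constant, and then integrate using orthonormality to evaluate the constant as $\dim V_\lambda/\mu({\bf M})$. The paper phrases the invariance via uniqueness of the reproducing element $Z_x$, while you phrase it via basis-independence of the kernel and unitarity of $\tau_g$; these are the same argument in slightly different words.
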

\begin{proof} 
For fixed $x \in {\bf M}$, let $Z_x(y)= \sum_{k=1}^m v_k(x)v_k(y)$ for 
$y \in {\bf M}$.  $Z_x$ is the
unique element of $V_{\lambda}$ satisfying 
$$
F(x) = \int F(y)Z_x(y)dy
$$
 for each $F \in V_{\lambda}$.
Since $dy$ and $V_{\lambda}$ are invariant under $G$, one sees from this that 
$Z_x(y) = Z_{g\cdot x}(g\cdot y)$ for each $g \in G$.  Since $G$ acts transitively
on ${\bf M}$, we see in particular from this that 
$Z_x(x) = \sum_{k=1}^m [v_k(x)]^2$ is independent of $x$.  Accordingly
\[ {\mu({\bf M})}\sum_{k=1}^m [v_k(x)]^2 = \int_{\bf M} \sum_{k=1}^m [v_k(u)]^2 du
= {\dim V_{\lambda}} \]
as desired.

\end{proof}

Recall that we have denoted the spectrum of $L$ by
$0=\lambda_{0}<\lambda_{1}\leq \lambda_{2}\leq ...$,
and we have let
$u_{0}, u_{1}, u_{2}, ...$ denote a corresponding
complete system of real-valued orthonormal eigenfunctions.

\begin{col}
\label{ulsq}
Suppose $0 < a < b$.  Then for any $x \in {\bf M}$,
\begin{equation}
\label{ulsqway}
\sum_{a/t^2 < \lambda_l \leq b/t^2} |u_l(x)|^2 \asymp t^{-s},
\end{equation}
as $t \to 0^+$, with constants independent of $x$ or $t$.
\end{col}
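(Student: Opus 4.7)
The plan is to reduce the pointwise sum to a pure eigenvalue count using the homogeneity result in Lemma \ref{eigensum}, and then apply Weyl's asymptotic formula.

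First I would regroup the eigenfunctions according to distinct eigenvalues. Let $\Lambda(t)$ denote the set of distinct eigenvalues $\lambda$ of $\mathcal{L}$ satisfying $a/t^2 < \lambda \leq b/t^2$, and for each such $\lambda$ let $V_\lambda$ denote the corresponding eigenspace. Since $\{u_l\}$ is an orthonormal system, the $u_l$ with $\lambda_l = \lambda$ form an orthonormal basis of $V_\lambda$. Applying Lemma \ref{eigensum} to each such basis gives
\begin{equation*}
\sum_{a/t^2 < \lambda_l \leq b/t^2} |u_l(x)|^2 \;=\; \sum_{\lambda \in \Lambda(t)} \frac{\dim V_\lambda}{\mu(\mathbf{M})} \;=\; \frac{1}{\mu(\mathbf{M})} \left[ \mathcal{N}_{b/t^2}(\mathcal{L}) - \mathcal{N}_{a/t^2}(\mathcal{L}) \right].
\end{equation*}
The essential point here is that this expression is independent of $x$; homogeneity of $\mathbf{M}$ is what makes this possible via Lemma \ref{eigensum}.

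Next I would apply Weyl's formula \eqref{Weyl} to the right-hand side:
\begin{equation*}
\mathcal{N}_{b/t^2}(\mathcal{L}) - \mathcal{N}_{a/t^2}(\mathcal{L}) \;=\; c\bigl(b^{s/2} - a^{s/2}\bigr)\, t^{-s} \;+\; O\bigl(t^{-(s-1)}\bigr)
\end{equation*}
as $t \to 0^+$. Since $0 < a < b$, the coefficient $b^{s/2} - a^{s/2}$ is strictly positive, so the leading $t^{-s}$ term dominates the $O(t^{-(s-1)})$ remainder for all sufficiently small $t$. Dividing by $\mu(\mathbf{M})$ yields both the upper and lower bounds asserted in \eqref{ulsqway}, with constants depending only on $\mathbf{M}$, $a$, and $b$, completing the argument.

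There is no real technical obstacle; the only delicate point is recognizing that the uniformity in $x$, which is the entire content of the corollary, is already encoded in Lemma \ref{eigensum} and comes from the transitive $G$-action. Without homogeneity, the same Weyl count would only control $\int_{\mathbf{M}} \sum |u_l|^2\, dx$, not the pointwise sum.
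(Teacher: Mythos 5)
Your proof is correct and follows essentially the same route as the paper: applying Lemma \ref{eigensum} eigenspace by eigenspace to identify the pointwise sum with $\mu(\mathbf{M})^{-1}[\mathcal{N}_{b/t^2}(\mathcal{L}) - \mathcal{N}_{a/t^2}(\mathcal{L})]$, and then invoking Weyl's formula \eqref{Weyl}. You simply spell out the grouping by distinct eigenvalues and the positivity of $b^{s/2}-a^{s/2}$, which the paper leaves implicit.
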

\begin{proof}
By Proposition \ref{eigensum} and (\ref{Weyl}), we have
\[ \sum_{a/t^2 < \lambda_l \leq b/t^2} |u_l(x)|^2 = \mu({\bf M})^{-1}[\mathcal{N}_{b/t^2}(L) - \mathcal{N}_{a/t^2}(L)] \asymp t^{-s} \]
as claimed.
\end{proof}

This then allows us to prove the following improvement on Corollary \ref{Lalphok}, for homogeneous manifolds.  
\begin{thm}
\label{Lalphimp}
In the situation of Corollary \ref{kersize} and Corollary \ref{Lalphok}, say that $f \neq 0$, and
${\bf M}$ is a homogeneous manifold, and $1 \leq \alpha \leq \infty$.  Then we actually have that
\begin{equation}
\label{kint3}
[\int |K_t(x,y)|^{\alpha}dy]^{1/\alpha} \asymp t^{-s/\alpha'}
\end{equation}
with constants independent of $x$ or $t$, as $t \to 0$.
\end{thm}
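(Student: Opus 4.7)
The plan is to complement the upper bound already supplied by Corollary \ref{Lalphok} with a matching lower bound $\|K_t(x,\cdot)\|_\alpha \gtrsim t^{-s/\alpha'}$, uniform in $x \in {\bf M}$ as $t \to 0^+$. I would proceed by first handling the case $\alpha = 2$ directly, and then bootstrapping to all other $\alpha$ by Hölder interpolation against the $L^1$ and $L^\infty$ endpoint upper bounds.

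\textbf{Step 1 ($L^2$ lower bound).}  From the expansion $K_t(x,y) = \sum_l F(t^2\lambda_l)u_l(x)u_l(y)$ and Parseval,
\[
\|K_t(x,\cdot)\|_2^2 = \sum_l |F(t^2\lambda_l)|^2 |u_l(x)|^2.
\]
Since $F$ is continuous on $[0,\infty)$ and $F \neq 0$, pick any $\xi_0>0$ at which $F(\xi_0) \neq 0$ (such a $\xi_0$ exists: if $F(0)\neq 0$ use continuity; otherwise $F$ must be non-zero at some positive point) and, by continuity, fix $0 < a < b$ and $c_0 > 0$ with $|F|^2 \geq c_0$ on $[a,b]$.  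Restricting the sum to indices $l$ with $a/t^2 < \lambda_l \leq b/t^2$ and invoking Corollary \ref{ulsq} yields
\[
\|K_t(x,\cdot)\|_2^2 \geq c_0 \sum_{a/t^2 < \lambda_l \leq b/t^2} |u_l(x)|^2 \gtrsim t^{-s}
\]
with constants independent of $x$, for all sufficiently small $t > 0$.  This is precisely the desired bound at $\alpha=2$ since $\alpha'=2$.

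\textbf{Step 2 (Interpolation for general $\alpha$).}  Corollary \ref{Lalphok} supplies $\|K_t(x,\cdot)\|_1 \leq C$ and $\|K_t(x,\cdot)\|_\infty \leq C t^{-s}$.  For $1 \leq \alpha \leq 2$, apply Hölder with exponents $(1,\infty)$ to the factorization $|K_t|^2 = |K_t|^\alpha \cdot |K_t|^{2-\alpha}$ to get
\[
\|K_t(x,\cdot)\|_2^2 \leq \|K_t(x,\cdot)\|_\alpha^\alpha \cdot \|K_t(x,\cdot)\|_\infty^{2-\alpha};
\]
combining with the Step 1 lower bound and the $L^\infty$ upper bound yields $\|K_t(x,\cdot)\|_\alpha^\alpha \gtrsim t^{-s}\cdot t^{s(2-\alpha)} = t^{-s(\alpha-1)}$, i.e.\ $\|K_t(x,\cdot)\|_\alpha \gtrsim t^{-s/\alpha'}$.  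For $2 \leq \alpha \leq \infty$, use the three-line inequality
\[
\|K_t(x,\cdot)\|_2 \leq \|K_t(x,\cdot)\|_1^{\theta}\,\|K_t(x,\cdot)\|_\alpha^{1-\theta}
\]
with $\theta = (\alpha-2)/(2(\alpha-1))$, so that $1-\theta = \alpha/(2(\alpha-1))$ and $1/(2(1-\theta)) = (\alpha-1)/\alpha = 1/\alpha'$.  Solving for $\|K_t(x,\cdot)\|_\alpha$ and inserting the Step 1 lower bound on $\|K_t(x,\cdot)\|_2$ and the $L^1$ upper bound gives $\|K_t(x,\cdot)\|_\alpha \gtrsim t^{-s/\alpha'}$.

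\textbf{Main obstacle.}  The only non-trivial ingredient is the $L^2$ lower bound in Step 1.  This is precisely where the homogeneity hypothesis is used: Corollary \ref{ulsq} rests on Lemma \ref{eigensum}, which in turn relies on the transitive action of $G$ to conclude that the diagonal $\sum_k |v_k(x)|^2$ of the reproducing kernel of each eigenspace is a constant.  For a general compact Riemannian manifold this sum may fluctuate with $x$, and one only obtains an average lower bound; the uniform bound in $x$ is what permits the upgrade from the upper estimate of Corollary \ref{Lalphok} to the two-sided asymptotic $\asymp$ claimed in Theorem \ref{Lalphimp}.  The remainder of the argument is routine.
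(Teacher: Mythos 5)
Your proposal is correct and follows essentially the same route as the paper: the $L^2$ lower bound via the eigenfunction expansion and Corollary \ref{ulsq} (which is where homogeneity enters through Lemma \ref{eigensum}), followed by H\"older/log-convexity interpolation of the $L^2$ lower bound against the $L^1$ and $L^\infty$ upper bounds from Corollary \ref{Lalphok}. The only difference is cosmetic bookkeeping in how the interpolation cases are split, so nothing further is needed.
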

\begin{proof}
 By Corollary \ref{Lalphok}, we need only prove the lower bounds.  
First we handle the case $\alpha = 2$.  Since $f$ is not identically zero, we may find $0 < a < b$ and 
$c > 0$, such that $|f| \geq c$ on $[a,b]$.   By (\ref{expout}) and Corollary \ref{ulsq}, we have that
\[ \int |K_t(x,y)|^2 dy = \sum_l |f(t^2\lambda_l)|^2 |u_l(x)|^2 \geq 
\sum_{l: a/t^2 \leq \lambda_l \leq b/t^2} |f(t^2\lambda_l)|^2 |u_l(x)|^2 
\geq
$$
$$
 c^2\sum_{a/t^2 < \lambda_l \leq b/t^2} |u_l(x)|^2  \gg t^{-s} \]
as $t \to 0^+$, with constants independent of $x$ or $t$.  This establishes the case $\alpha = 2$.

The lower bounds for $\alpha = 1,\infty$ now follow at once from the simple
general inequality
\begin{equation}  \label{21inf}
\|f\|_2^2 \leq \|f\|_1 \|f\|_{\infty},
\end{equation}
the lower bound for $\alpha = 2$, and the upper bounds for $\alpha = 1,\infty$, as applied to $f(y) = K_t(x,y)$.

For the lower bounds for other $\alpha$,
we note that if $q < 2 < r$, and if $0 < \theta < 1$
is the number with $1/2 = \theta/q + (1-\theta)/r$, then one has the general
inequality
\begin{equation}  \label{q2r}
\|f\|_2 \leq \|f\|_q^{\theta} \|f\|_{r}^{1-\theta}.
\end{equation}
If $\alpha > 2$, the lower bound follows, after a brief computation, from (\ref%
{q2r}) in the case $q = 1$, $r = \alpha$, and the lower bounds for $2$ and $1$.
If $\alpha < 2$, the lower bound follows, after a briefer computation, from (\ref%
{q2r}) in the case $q = \alpha$, $r = \infty$, and the lower bounds for $2$ and $%
\infty$. This completes the proof.

\end{proof}

\section{The improved upper estimate}

For $x = (x_{1}, . . . , x_{m}) \in \mathbb{R}^{m}$, we define as usual $\|x\|^{m}_{p} =
(\sum_{i=1}^m |x_{i}|^{p})^{1/p}$ for $1 \leq p < \infty$, and $\|x\|^{m}_{\infty}
= \max_{1 \leq i \leq m} |x_{i}|$.  
 We denote by $\ell^m_p$
the set of vectors $x \in \mathbb{R}^{m}$ endowed
with the norm $\|\cdot\|_{\ell^m_p}$ and $b^m_p$
the unit ball of $\ell^m_p$. Given $1 \leq p \leq \infty$ and an integer
$N \geq 0$, we denote by $\mathbb{B}^{p}_{N} = \mathbb{B}^{p}_{N}({\bf M})$ 
the class of all functions $f \in {\bf E}_{N^{2}}(\mathcal L)$ such
that 	$\|f\|_p \leq 1$.

The proof of Theorem \ref{impr} will rely on the following:

\begin{lem}
\label{bd41}
Let $S_n$ denote either of the symbols $d_n$ or $\delta_n$. Then for $1 \leq p, q \leq \infty$ and
$1 \leq n \leq \dim  {\bf E}_{N^{2}}$, we have
\[ S_{n}(\mathbb{B}^{p}_{N}, L_{q}) \leq C N^{s(\frac{1}{p}-\frac{1}{q})}S_{n}(b^{m_N}_{p}, \ell^{m_N}_q), \]
where $m_N  \asymp \dim {\bf E}_{N^{2}} \asymp N^{s}$.
\end{lem}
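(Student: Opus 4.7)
The plan is to discretize the approximation problem by sampling on a $\rho$-lattice with $\rho \asymp N^{-1}$, reducing widths on $(\mathbf{E}_{N^2}(\mathcal{L}), L^p, L^q)$ to widths on $(\mathbb{R}^{m_N}, \ell^p, \ell^q)$. The Plancherel--Polya inequality of Theorem~\ref{genp} makes the sampling map $Tf=(f(x_k))_k$ a near-isometry between $(\mathbf{E}_{N^2}, \|\cdot\|_r)$ and $(\mathbb{R}^{m_N}, \rho^{s/r}\|\cdot\|_{\ell^{m_N}_r})$ for every $1 \leq r \leq \infty$, and the mismatch between the scales at $r=p$ and $r=q$ produces exactly the claimed factor $\rho^{s/q-s/p}\asymp N^{s(1/p-1/q)}$. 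The cubature formula of Theorem~\ref{cubformula}, together with Theorem~\ref{prodthm}, will supply a reconstruction operator that is a left-inverse to $T$ on $\mathbf{E}_{N^2}$.

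Concretely, I would fix $\rho=c_0 N^{-1}$ with $c_0$ small enough that any $\rho$-lattice $\{x_k\}_{k=1}^{m_N}$ satisfies Theorem~\ref{genp} at both $r=p$ and $r=q$, and the positive cubature of Theorem~\ref{cubformula} is exact on $\mathbf{E}_{16dN^2}(\mathcal{L})$. Weyl's law plus volume counting give $m_N \asymp \dim \mathbf{E}_{N^2} \asymp N^s$, and $T(\mathbb{B}^p_N)\subset C\rho^{-s/p}\,b^{m_N}_p$. For the reverse direction, define
\[
Rv = \sum_k \lambda_{x_k}\, v_k\, K_N(\cdot, x_k),
\]
where $\lambda_{x_k}\asymp\rho^s$ are the cubature weights and $K_N$ is the kernel of $F(\mathcal{L}/N^2)$ for a fixed $F\in\mathcal{S}(\mathbb{R}^+)$ supported in $[0,4]$ with $F\equiv 1$ on $[0,1]$. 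Since $K_N(x,\cdot)\in\mathbf{E}_{4N^2}$, Theorem~\ref{prodthm} places $f\cdot K_N(x,\cdot)\in\mathbf{E}_{16dN^2}$ for every $f\in\mathbf{E}_{N^2}$; applying the cubature formula to this product identifies $(RTf)(x)$ with $(F(\mathcal{L}/N^2)f)(x)=f(x)$, so $RT=I$ on $\mathbf{E}_{N^2}$. Corollary~\ref{kersize} (with $t=1/N$) gives the endpoint bounds $\|R\|_{\ell^1_{m_N}\to L^1}\leq C\rho^s$ and $\|R\|_{\ell^\infty_{m_N}\to L^\infty}\leq C$, so Riesz--Thorin yields $\|R\|_{\ell^q_{m_N}\to L^q}\leq C\rho^{s/q}$.

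With $T$ and $R$ in hand, the transfer is mechanical. For $S_n=\delta_n$, let $A$ be a rank-$n$ linear map on $\mathbb{R}^{m_N}$ within $\varepsilon$ of realizing $\delta_n(b^{m_N}_p,\ell^{m_N}_q)$; extend $T$ to $\tilde T = T\circ G(\mathcal{L}/N^2)$ on $L^q$ for a Schwartz $G$ equal to $1$ on $[0,1]$ (so $\tilde T|_{\mathbf{E}_{N^2}}=T$, and $\tilde T$ is bounded on $L^q$ by Corollary~\ref{Lalphok} and Lemma~\ref{younggen}), and set $\tilde A = R A \tilde T$, a rank-$\leq n$ operator on $L^q$. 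Because $RTf=f$ for $f\in \mathbb{B}^p_N$, one has $f-\tilde A f = R(I-A)Tf$; chaining the bounds $\|R\|_{\ell^q\to L^q}\leq C\rho^{s/q}$, $\|(I-A)Tf\|_{\ell^q}\leq(\delta_n+\varepsilon)\|Tf\|_{\ell^p}$, and $\|Tf\|_{\ell^p}\leq C\rho^{-s/p}$ yields the advertised inequality. For $S_n=d_n$, take $\tilde Z_n=R(Z_n)\subset L^q$ for a near-optimal $n$-dimensional $Z_n\subset\ell^{m_N}_q$ and repeat the same chain. The main obstacle is the construction and analysis of $R$: the identity $RT=I$ on $\mathbf{E}_{N^2}$ hinges on Theorem~\ref{prodthm} keeping the relevant products in an eigenspace where the cubature is exact, and the norm estimate on $R$ requires the near-diagonal kernel decay of Corollary~\ref{kersize}; once those two ingredients are in place, everything else is bookkeeping of powers of $\rho$.
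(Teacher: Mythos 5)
Your proposal is correct and follows essentially the same route as the paper's proof: sampling on a $\rho$-lattice with $\rho\asymp N^{-1}$ controlled by the Plancherel--Polya inequality (Theorem \ref{genp}), reconstruction via the cubature formula (Theorem \ref{cubformula}) applied to $f\cdot K_N(x,\cdot)$, which Theorem \ref{prodthm} keeps in an eigenspace where the cubature is exact, and norm bounds for the reconstruction operator from the kernel localization (Corollaries \ref{kersize}, \ref{Lalphok}) plus Riesz--Thorin. The only cosmetic difference is that you carry out the width transfer by hand (building $\tilde A=RA\tilde T$ and $\tilde Z_n=R(Z_n)$), where the paper simply factors the identity $I=T\,i_2\,i_1\,U_N$ and invokes the standard multiplicative property of $n$-widths.
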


\begin{proof}
 Using Theorem \ref{prodthm}, we may choose $a_1 > 0$ such that, for any $N$, $f, g \in \Pi_{2N} \Rightarrow
fg \in \Pi_{a_1N}$.  
By Theorems \ref{cubformula} and \ref{genp}, there is an $a_2 > 0$ such that
whenever $\Lambda = \{t_1,\ldots,t_m\}$ is a $\rho$-lattice for $\rho = a_2/N$, then there are constants
$\{w_1,\ldots,w_m\}$ such that, for all $f \in \Pi_{a_1N}$,

\[ \int f = \sum_{j=1}^m w_j f(t_j), \]

and moreover, for all $1 \leq p \leq \infty$, 

\begin{equation}
\label{nrmequiv}
\|f\|_p \sim  N^{-s/p}\|U_N(f)\|_{\ell^m_p} \sim \|U_N(f)\|_{\ell^m_{p,w}},
\end{equation}

where $U_{N}: {\bf E}_{a_{1}^{2}N^{2}} \to \mathbb{R}^{m}$ is given by

\[ U_N(f) = (f(t_1),\ldots,f(t_m)), \]

and for $u = (u_{1},\ldots,u_{m}) \in \mathbb{R}^{m}$,

\[  \|u\|_{\ell^m_{p,w}} = 
\begin{cases}
\left( \sum_{j=1}^m |u_j|^p w_j \right)^{1/p} & \mbox{ if } p \leq \infty \\
\max_{1 \leq j \leq m} |u_j| & \mbox{ if } p = \infty.
\end{cases}
\]

Now, let $\eta$ be a $C^{\infty}$ function on $[0,\infty)$ which equals $1$ on $[0,1]$,
and which is supported in $[0,4]$.  Let $K_{t}$ be the kernel of $\eta(t^{2}{\mathcal L})$, and let
$K^{N} = K_{1/N}$.  Since $\eta(\lambda_k/N^{2}) = 1$ whenever $\lambda_k \leq N^{2} = \omega$
if $N = \omega^{1/2}$, we have that for $f \in  {\bf E}_{\omega}({\mathcal L})$, the
reproducing formula

\begin{equation}
\label{repro}
f(x) = [\eta({\mathcal L}/N^{2})f](x) = \int_{\bf M} K^{N}(x,y) f(y) dy 
\end{equation}
where $dy$ is our invariant measure.  Moreover, $\eta(\lambda_k/N^2) = 0$ if 
$\lambda_k > 4N^{2} = 4\omega$, so that $K^{N}(\cdot,y) \in {\bf E}_{2\omega}
$ for any fixed $y$.  Thus, for any $u = (u_{1},\ldots,u_{m}) \in \mathbb{R}^{m}$, we may
define a map $T: \mathbb{R}^{m} \to {\bf E}_{4N^{2}}$ by
\begin{equation}
\label{tudf}
T(u)(\cdot) = \sum_{j=1}^m w_ju_j K^N(\cdot,t_j).
\end{equation}
We claim that for $1 \leq q \leq \infty$, 
\begin{equation}
\label{lqbdd}
\|T(u)(\cdot)\|_q \leq C\|u\|_{\ell^m_{q,w}}.
\end{equation}
Indeed, if $q = 1$, this follows at once from 
Corollary \ref{Lalphok} with $\alpha = 1$.
For $q = \infty$, (\ref{tudf}) follows from the second equivalence of (\ref{nrmequiv})
in the case $p = 1$, since
\begin{eqnarray*}
\|T(u)\|_{\infty} & \leq & \|u\|_{\infty} \max_{x \in {\bf M}} \sum_{j=1}^m w_j |K^N(x,t_j)|\\
& \leq & C\|u\|_{\infty}\max_{x \in {\bf M}}\int_{\bf M} |K^N(x,y)|dy \\
& \leq & C\|u\|_{\infty},
\end{eqnarray*}
again by Corollary \ref{Lalphok} with $\alpha = 1$.  (\ref{lqbdd}) now follows from the Riesz-Thorin interpolation theorem.

By (\ref{repro}) and the fact that $f, g \in {\bf E}_{4N^{2}} \Rightarrow
fg \in {\bf E}_{a_1^{2}N^{2}}$, we have that for all $f \in {\bf E}_{N^{2}}$,

\[ f(x) = \sum_{j = 1}^m w_j f(t_j) K^N(x,t_j). \]

The rest of the proof of the lemma is basically just as in \cite{brdai}: Note $f = TU_N f$ for $f \in \Pi_N$.
Thus, we can factor the identity $I: {\bf E}_{N^{2}} \cap L^p \to {\bf E}_{\alpha_1^{2}N^{2}} \cap L^q$ as follows:

\[ I:\ \  {\bf E}_N^{2} \cap L^p \stackrel{U_N}{\longrightarrow} \ell^m_p  \stackrel{i_1}{\longrightarrow}
\ell^m_q \stackrel{i_2}{\longrightarrow} \ell^m_{q,w} \stackrel{T}{\longrightarrow} 
{\bf E}_{\alpha_1^{2}N^{2}} \cap L^q,  \]
where $\ell^{m}_{q,w}$ denotes the space $\mathbb{R}^{m}$, equipped with the norm $\|\cdot\|_{\ell^{m}_{q,w}}$,
and $i_{1}, i_{2}$ both denote identity maps.  By well-known properties of $n$-widths (see 
\cite{pin}, Chapter II), 

\[ S_{n}(\mathbb{B}^{p}_{N},L^{q}) \leq S_{n}(\mathbb{B}^{p}_{N},L^{q} \cap {\bf E}_{\alpha_1^{2}N^{2}}) \leq
$$
$$
\|U_{N}\|_{({\bf E}_{N^{2}} \cap L^{p},\ell^{m}_{p})}\|i_{2}\|_{(\ell^{m}_{q},\ell^{m}_{q,w})}\|T\|_{(\ell^{m}_{q,w},{\bf E}_{\alpha_1^{2}N^{2}} \cap L^{q})}
S_{n}(b^{m}_{p},\ell^{m}_{q}). \]

By Theorem \ref{cubformula}, $w_{j }\sim \rho^{s} \sim N^{-s}$ for all $j$, with constants independent of $N$,
so $\|i_2\|_{(\ell^m_q,\ell^m_{q,w})} \sim N^{-s/q}$.  By (\ref{nrmequiv}), 
$\|U_N\|_{({\bf E}_N^{2} \cap L^p,\ell^m_p)} \sim N^{s/p}$, and by (\ref{lqbdd}), $\|T\|_{(\ell^m_{q,w},{\bf E}_{\alpha_1^{2}N^{2}}\cap L^q)} \leq C$.
Combining these facts, we find the lemma.
\end{proof}

{\bf Proof of Theorem \ref{impr}}  
We prove the required upper estimates for $1 \leq p \leq 2 \leq q \leq \infty$, for linear widths; the case of
Kolmogorov widths may be treated similarly.
By the duality
$\delta_n(B^r_p,L_q) = \delta_n(B^r_{q'},L_{p'})$, it suffices to prove them for $1 \leq p \leq 2 \leq q \leq p'$,
which we assume from here on.

By Lemma \ref{bd41} with $q = p'$, and by Lemma \ref{phijest} with $L = \mathcal L$ and $p=q$, we have

%

\begin{equation}
\label{pinwrap}
\delta_n(B^r_p,L_q) \leq \delta_n(B^r_p,L_{p'}) \leq C\sum_{k=0}^{\infty} 2^{-kr} \delta_{n_k}({\mathcal B}^p_{2^{k+1}},L_{p'}) 
\leq 
$$
$$
C\sum_{k=0}^{\infty} 2^{-kr} 2^{ks(\frac{2}{p}-1)}\delta_{n_k}(b_p^{m_k},\ell^{m_k}_{p'}), 
\end{equation}
where $\sum n_k \leq n-1$ and $m_k \asymp 2^{n_k}$.  

Assume now $C_1 2^{sv} \leq n \leq C_1^2 2^{sv}$, with $C_1 > 0$ to be specified later.  We fix a real number $\rho \in (0,2(r/s-1/p))$, and set

\begin{equation}
\label{nkdf}
n_k = 
\begin{cases}
m_k & \mbox{if } 0 \leq k \leq v, \\
\lfloor 2^{s((1+\rho)v-k\rho))} \rfloor & \mbox{if } v < k < (1+\rho^{-1})v, \\
0 & \mbox{if } k \geq (1+\rho^{-1})v.
\end{cases}
\end{equation}

One calculates then that $\sum_k n_k \asymp 2^{sv}$.  Thus one can take $C_1$ so large
that $\sum_k n_k \leq C_1 2^{sv} - 1 \leq n-1$.   The proof is now completed by estimating
the $\delta_{n_k}(b_p^{m_k},\ell^{m_k}_{p'})$ in (\ref{pinwrap}): one has

\begin{equation}
\label{casest}
\delta_{n_k}(b_p^{m_k},\ell^{m_k}_{p'})
\begin{cases}
= 0 & \mbox{if } 0 \leq k \leq v, \\
\leq C2^{-\frac{s(1+\rho)v}{2}}2^{sk(\frac{1}{p'}+\frac{\rho}{2})}((\rho + 1)(k+1-v))^{\frac{1}{2}} & \mbox{if } v < k < (1+\rho^{-1})v, \\
\leq 1 & \mbox{if } k \geq (1+\rho^{-1})v.
\end{cases}
\end{equation}
Here the first case follows by noting that, in that case, $\delta_{n_k}(b_p^{m_k},\ell^{m_k}_{p'}) = \delta_{m_k}(b_p^{m_k},\ell^{m_k}_{p'})$;
the third case follows by noting that, in that case, $\delta_{n_k}(b_p^{m_k},\ell^{m_k}_{p'}) = \delta_{0}(b_p^{m_k},\ell^{m_k}_{p'})$; and the second case
follows from Gluskin's estimate (\cite{glus}) 
$$
\delta_{n_k}(b_p^{m_k},\ell^{m_k}_{p'}) \leq Cm_k^{1/p'}n_k^{-1/2}\log^{1/2}(1+m_k/n_k).
$$
Substituting (\ref{casest}) in (\ref{pinwrap}), one calculates
\[ \delta_n(B^r_p,L_q) \leq C2^{v(-r+s(\frac{1}{p}-\frac{1}{2}))} \leq Cn^{-\frac{r}{s}+\frac{1}{p}-\frac{1}{2}}, \]
as desired.  This completes the proof.

\section{Lower bounds on homogeneous manifolds}

In this section we will prove Theorem \ref{basicshp}. First we need the following simple fact, which is another variant  of our Lemma \ref{balls0}. 
\begin{lem}
\label{balls}
For each positive integer $N$ with $2N^{-1/s} < diam {\bf M}$, there exists a collection of disjoint balls
${\mathcal Q}^N = \{B(x_i^N, N^{-1/s})\}$, such that the balls with the same centers and 3 times the radii cover ${\bf M}$,
and such that $P_N := \#{\mathcal Q}^N \asymp N$.  
\end{lem}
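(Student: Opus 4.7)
The lemma is a standard packing/covering statement together with a counting estimate, and much of the structural content is already supplied by Lemma \ref{balls0}. My plan is as follows.

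First, I would apply Lemma \ref{balls0} with the choice $\rho = 4N^{-1/s}$, which is admissible once $N$ is large enough that $\rho < \rho_0({\bf M})$ (and in particular whenever $2N^{-1/s} < \operatorname{diam}{\bf M}$, up to adjusting finitely many small $N$ by hand). This produces points $\{y_\nu\}$ such that the balls $B(y_\nu,\rho/4) = B(y_\nu, N^{-1/s})$ are pairwise disjoint, and the balls $B(y_\nu,\rho/2) = B(y_\nu,2N^{-1/s})$ cover ${\bf M}$. Since $2N^{-1/s} < 3N^{-1/s}$, the triple-radius balls $B(y_\nu, 3N^{-1/s})$ cover ${\bf M}$ a fortiori. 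Setting $x_i^N := y_\nu$ and ${\mathcal Q}^N := \{B(x_i^N, N^{-1/s})\}$ immediately gives the disjointness and covering properties claimed.

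It remains to verify $P_N \asymp N$. Here I would use the standard uniform volume bound for small metric balls on a compact Riemannian manifold: there exist $r_0 > 0$ and constants $0 < c_1 < c_2$, depending only on $({\bf M},g)$, such that
\begin{equation*}
c_1 r^s \,\leq\, \mu(B(x,r)) \,\leq\, c_2 r^s \qquad \text{for every } x \in {\bf M} \text{ and every } 0 < r \leq r_0.
\end{equation*}
This is a consequence of the Bishop--Gromov comparison estimates together with compactness (curvature and injectivity radius are uniformly bounded on ${\bf M}$). Taking $N$ large enough so that $3N^{-1/s} \leq r_0$, the disjointness of the $B(x_i^N, N^{-1/s})$ and their containment in ${\bf M}$ yields
\begin{equation*}
P_N \cdot c_1 N^{-1} \,=\, \sum_i c_1 (N^{-1/s})^s \,\leq\, \sum_i \mu(B(x_i^N,N^{-1/s})) \,\leq\, \mu({\bf M}),
\end{equation*}
so $P_N \leq \mu({\bf M}) c_1^{-1} N$. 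For the lower bound, the covering by $B(x_i^N, 3N^{-1/s})$ gives
\begin{equation*}
\mu({\bf M}) \,\leq\, \sum_i \mu(B(x_i^N, 3N^{-1/s})) \,\leq\, P_N \cdot c_2 3^s N^{-1},
\end{equation*}
so $P_N \geq \mu({\bf M}) c_2^{-1} 3^{-s} N$. Together these give $P_N \asymp N$, and the finitely many excluded small values of $N$ can be handled trivially by adjusting constants.

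The only nontrivial ingredient is the uniform two-sided volume bound for geodesic balls of small radius, but on a smooth compact manifold this is a textbook fact; everything else is bookkeeping around Lemma \ref{balls0}, so I anticipate no real obstacle.
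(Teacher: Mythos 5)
Your proposal is correct, and it reaches the same two-sided count the paper does, but the construction step is routed differently. The paper does not invoke Lemma \ref{balls0} here: it simply takes ${\mathcal Q}^N$ to be a \emph{maximal} disjoint collection of balls of radius $N^{-1/s}$, so that the triple-radius covering property is an immediate consequence of maximality, and this works uniformly for every $N$ with $2N^{-1/s}$ less than the diameter of ${\bf M}$, with no ``sufficiently small $\rho$'' restriction to patch afterwards. You instead obtain the family from Lemma \ref{balls0} with $\rho = 4N^{-1/s}$, which gives disjoint balls $B(y_\nu,N^{-1/s})$ and a cover by $B(y_\nu,2N^{-1/s})$, hence a fortiori by the triple-radius balls; this is perfectly valid, at the cost of having to dispose of the finitely many admissible $N$ for which $\rho$ is not small enough for Lemma \ref{balls0} (for those you would in effect fall back on exactly the maximal-family argument the paper uses, so the caveat is harmless but worth stating explicitly rather than waving at). The counting argument is identical in both treatments: disjointness plus the uniform lower volume bound $\mu(B(x,N^{-1/s})) \gg N^{-1}$ gives $P_N \ll N$, and the covering plus the upper bound $\mu(B(x,3N^{-1/s})) \ll N^{-1}$ gives $P_N \gg N$; you spell out the uniform two-sided bound $c_1 r^s \leq \mu(B(x,r)) \leq c_2 r^s$ for small $r$ on a compact manifold, which the paper uses implicitly through its $\gg$ signs. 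In short, your version buys you reuse of an already-stated lemma, while the paper's maximal-collection argument is slightly more self-contained and exception-free; both are sound.
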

{\bf Proof}  We need only let ${\mathcal Q}^N$ be a maximal disjoint collection of balls of radius $N^{-1/s}$.  Then surely
the balls with the same centers and $3$ times the radii cover ${\bf M}$.  Thus by disjointness
\[ \mu({\bf M}) \geq \sum_{i=1}^{P_N}  \mu(B(x_i^N, N^{-1/s})) \gg \sum_{i=1}^{P_N} 1/N = P_N/N, \]
while by the covering property
\[ P_N/(3^sN) \gg \sum_{i=1}^{P_N} \mu(B(x_i^N, 3N^{-1/s})) \geq \mu({\bf M}) \]
so that $P_N \asymp N$ as claimed.\\

We fix collections of balls ${\mathcal Q}^N$ as in Proposition \ref{balls}.

\begin{lem}
\label{disfns}
Say ${\bf M}$ is a homogeneous manifold.  Then there are smooth functions $\varphi_i^N$ ($2N^{-1/s} < diam {\bf M}$,
$1 \leq i \leq P_N$), as follows:\\
(i) supp $\varphi_i^N \subseteq B_i^N := B(x_i^N, N^{-1/s})$;\\
(ii) For $1 \leq q \leq \infty$, $\|\varphi_i^N\|_q \asymp N^{-1/q}$, with constants independent of $i$ or $N$;\\
(iii) For $1 \leq p \leq \infty$, and $r > 0$, 
$$
\|\sum_{i=1}^{P_N}a_i{\mathcal L}^{r/2}\varphi_i^N\|_p \leq 
CN^{\frac{r}{s}-\frac{1}{p}}\|a\|_p,
$$

with $C$ independent of $a = (a_1,... ,a_{P_N}) \in \mathrm{R}^{P_N}$, $p$ or $N$.
\end{lem}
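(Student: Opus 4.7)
The plan is to construct every $\varphi_i^N$ as a $G$-translate of one reference bump and then exploit the $G$-invariance of the Casimir operator $\mathcal{L}$. Fix a base point $x_0 \in {\bf M}$ and a nonnegative $\psi \in C_c^\infty(\mathbb{R}^s)$ with $\psi \equiv 1$ on $B(0,1/2)$ and $\mathrm{supp}\,\psi \subseteq B(0,1)$. For $N$ large enough that $N^{-1/s}$ is less than the injectivity radius at $x_0$, define $\varphi^N(y) = \psi\bigl(N^{1/s}\exp_{x_0}^{-1}(y)\bigr)$ in a normal neighborhood of $x_0$ (and $0$ elsewhere); for each $i$ pick $g_i \in G$ with $g_i\cdot x_0 = x_i^N$ and set $\varphi_i^N = \varphi^N \circ g_i^{-1}$. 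Property (i) is immediate, and property (ii) follows by the change of variables $v = N^{1/s}\exp_{x_0}^{-1}(y)$: the Jacobian of $\exp_{x_0}$ is $1 + O(|v|^2)$ near the origin, so $\|\varphi^N\|_q^q \asymp N^{-1}\|\psi\|_q^q$, and the bound transfers to every $\varphi_i^N$ since $g_i$ preserves the Riemannian measure.

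The heart of the argument is (iii). Because $\mathcal{L}$ is the image of a central element of the enveloping algebra, it commutes with every translation of $L_p({\bf M})$ by elements of $G$. Hence $\mathcal{L}^{r/2}\varphi_i^N = \Phi^N \circ g_i^{-1}$, where $\Phi^N := \mathcal{L}^{r/2}\varphi^N$. The key technical claim is the pointwise decay
\begin{equation*}
|\Phi^N(x)| \leq C_M\, N^{r/s}\bigl[1 + N^{1/s}d(x,x_0)\bigr]^{-M},\qquad M \geq 0.
\end{equation*}
Granted this, since each $g_i$ is an isometry, $|\Phi^N \circ g_i^{-1}(x)| \leq C_M N^{r/s}[1 + N^{1/s}d(x,x_i^N)]^{-M}$. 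For $p=\infty$, the $N^{-1/s}$-separation of the centers $\{x_i^N\}$ (forced by the disjointness of $B(x_i^N, N^{-1/s})$) yields $\sum_i [1 + N^{1/s}d(x,x_i^N)]^{-M} \leq C$ for $M > s$ by a standard dyadic count, whence $\|T(a)\|_\infty \leq CN^{r/s}\|a\|_\infty$ for the linear map $T(a) := \sum_i a_i\, \mathcal{L}^{r/2}\varphi_i^N$. For $p=1$, the triangle inequality combined with (\ref{intest}) and the pointwise decay yield $\|\Phi^N\|_1 \leq CN^{r/s-1}$, so $\|T(a)\|_1 \leq CN^{r/s-1}\|a\|_1$. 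Riesz--Thorin interpolation applied to $T:\mathbb{R}^{P_N}\to L_p({\bf M})$ then delivers the desired bound for all $1 \leq p \leq \infty$.

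The main obstacle is the pointwise estimate on $\Phi^N$. I would prove it using the Littlewood--Paley decomposition of Lemma \ref{phijest}. After writing $\varphi^N = c_N + \widetilde\varphi^N$ to absorb the mean (on which $\mathcal{L}^{r/2}$ vanishes), decompose $\Phi^N = \sum_{j\ge 0}\phi_j(\mathcal{L})\mathcal{L}^{r/2}\widetilde\varphi^N$. For $j \geq 1$ the multiplier factors as $2^{(j-1)r}\widetilde\phi(\lambda/4^{j-1})$, with $\widetilde\phi(\lambda) := \phi(\lambda)\lambda^{r/2}$ Schwartz and vanishing at $0$, so Theorem \ref{nrdglc}(a) yields the kernel bound
\begin{equation*}
|K^{(j)}(x,y)| \leq C_M\, 2^{j(r+s)}\bigl[1 + 2^j d(x,y)\bigr]^{-M},\qquad M \geq 0;
\end{equation*}
the $j=0$ term is handled similarly after extending $\eta(\lambda)\lambda^{r/2}$ to a Schwartz function that agrees with it on $[\lambda_1,\infty)$, which is permissible because $\widetilde\varphi^N$ is orthogonal to the zero-eigenfunction. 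One then splits the $j$-sum at the critical index $J$ with $2^J \asymp N^{1/s}$. For $j \leq J$, integrating $K^{(j)}$ against the narrowly supported $\widetilde\varphi^N$ gives a convergent geometric series in $2^{j(r+s)}\cdot N^{-1}$. For $j > J$, where $K^{(j)}$ is localized at a scale finer than $\mathrm{supp}\,\widetilde\varphi^N$, one factors $\mathcal{L}^{r/2} = \mathcal{L}^{r/2-m}\mathcal{L}^m$ for an integer $m > r/2$, applies the kernel bound for $\phi_j(\mathcal{L})\mathcal{L}^{r/2-m}$, and exploits the pointwise estimate $|\mathcal{L}^m\widetilde\varphi^N| \lesssim N^{2m/s}\mathbf{1}_{B(x_0,N^{-1/s})}$ (coming from the scaling of the bump) to recover a convergent series. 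The bookkeeping across these two regimes, together with the pointwise decay factor $(1 + N^{1/s}d(x,x_0))^{-M}$ in the off-support region, constitutes the main technical difficulty.
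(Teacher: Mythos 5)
Your proposal is essentially viable, but it takes a genuinely different route from the paper. The paper never uses coordinate bumps: it takes $h_0(\xi)=f_0(\xi^2)$ even and Schwartz with $\mathrm{supp}\,\hat h_0\subseteq(-1,1)$, sets $f(u)=u^Mf_0(u)$, and defines $\varphi_i^N(x)=N^{-1}K_t(x_i^N,x)$, where $K_t$ is the kernel of $f(t^2\mathcal L)$ at scale $t\asymp N^{-1/s}$. Then (i) comes from the finite-propagation result (Theorem \ref{wavprop}), (ii) from the two-sided kernel norm estimates of Theorem \ref{Lalphimp} (this is where homogeneity enters), and for (iii) one writes $\mathcal L^{r/2}\varphi_i^N=CN^{\frac{r}{s}-1}K^g_t(x_i^N,\cdot)$ with $g(u)=u^{r/2}f(u)$, a multiplier that is only finitely smooth at $0$; the bulk of the paper's proof is the technical fact (*), proved by a closed-graph argument, extending Corollaries \ref{kersize} and \ref{Lalphok} to such multipliers. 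The endpoint bounds $p=1,\infty$ and Riesz--Thorin interpolation are then exactly as in your scheme. Your construction replaces this spectral machinery by explicit geodesic bumps, centrality of $\mathcal L$, and a Littlewood--Paley estimate for $\mathcal L^{r/2}$ of a bump: it is more elementary, yields (ii) by a change of variables with no homogeneity, and avoids (*) entirely, at the price of proving the pointwise decay of $\Phi^N$ by hand.

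One claim needs correcting: the decay $|\Phi^N(x)|\leq C_M N^{r/s}[1+N^{1/s}d(x,x_0)]^{-M}$ cannot hold for all $M$ when $r/2$ is not an integer, since $\mathcal L^{r/2}$ is then nonlocal (compare the fractional Laplacian of a bump in $\mathbb{R}^s$, which decays only like $|x|^{-s-r}$); indeed your own low-frequency sum $\sum_{j\leq J}2^{j(r+s)}N^{-1}[1+2^jd(x,x_0)]^{-M}$ caps the attainable exponent at $s+r$. This is harmless: since $r>0$ the exponent $s+r>s$ suffices both for the $L^1$ bound via (\ref{intest}) and for the dyadic count over the $N^{-1/s}$-separated centers, and the high-frequency regime $j>J$ does give arbitrary decay as you say; but state the claim with exponent $s+r$ (or any fixed exponent in $(s,s+r]$), not ``for all $M$''. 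Two smaller points: for $j\geq1$ the multiplier $\phi_j(\lambda)\lambda^{r/2}$ vanishes at $0$, so you should integrate its kernel against $\varphi^N$ itself --- the mean subtraction matters only for the $j=0$ piece, and $\widetilde\varphi^N$ is not ``narrowly supported'' (it equals $-c_N$ off the ball), though its $L^1$ norm of order $N^{-1}$ together with compactness of ${\bf M}$ still handles that term; and the translation step tacitly assumes a $G$-invariant metric and measure so that each $g_i$ is an isometry --- either fix such a metric (always possible by averaging over the compact group, and harmless up to constants), or run the Littlewood--Paley argument at each center $x_i^N$ directly, since the kernel bounds of Theorem \ref{nrdglc} are uniform in the center.
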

{\bf Proof} We let $h_0(\xi) = f_0(\xi^2)$ be an even element of ${\mathcal S}(\mathrm{R})$ with supp$\hat{h}_0 \subseteq (-1,1)$.
For a postitive integer $M$ yet to be chosen, let $f(u) =  u^M f_0(u)$, and set $h(\xi) = f(\xi^2) = \xi^{2M}f_0(\xi^2)$,
so that $\hat{h} = c\partial^{2M}\hat{h_0}$ still has support contained in $(-1,1)$.  Thus, by Theorem \ref{wavprop},
there is a $C_0 > 0$ such that
for $t > 0$, the kernel $K_t(x,y)$ of $h(t\sqrt{\mathcal L}) = f(t^2{\mathcal L})$ has the property that
$K_t(x,y) = 0$ whenever $d(x,y) > C_0t$.   Thus if $t = N^{-1/s}/2C_0$, 
\[ \varphi_i^N(x) := \frac{1}{N}K_t(x_i^N,x) \]
satisfies (i).  By Theorem \ref{Lalphimp}, $\|\varphi_i^N\|_q \asymp N^{-1}(N^{-1/s})^{-s/q'} = N^{-1/q}$, so
(ii) holds.  We shall show that (iii) holds if $M$ is sufficiently large.  For this, we will need a technical fact.\\

To state this technical fact, we temporarily suspend the above notation.  
For each positive integer $J$,  we let 
$$
{\mathcal S}_J(\mathrm{R}^+) = \{f \in C^J([0,\infty)): \|f\|_{{\mathcal S}_J}
:= \sum_{i+j \leq J}\|x^i \partial^j f\|_{\infty} < \infty\}.
$$
  Fix $t > 0$.  For $J=J_0$ sufficiently large, using
(\ref{Weyl}), (\ref{lamest}) and (\ref{ulest}), one checks that the right side of
\begin{equation}
\label{expoutm}
K_t^f(x,y) := \sum_l f(t^2\lambda_l)u_l(x)u_l(y)
\end{equation}
converges uniformly to a continuous function on ${\bf M} \times {\bf M}$, and in fact that for
some $C_t > 0$,
\begin{equation}
\label{ktt}
\|K_t^f\|_{\infty} \leq C_t\|f\|_{{\mathcal S}_{J_0}}.
\end{equation}
  By testing
on the $u_m$ as usual, one sees that $K_t^f$ is the kernel of $f(t^2{\mathcal L})$, in the
sense that (\ref{ft2F}) holds for all $F \in L^2$ if $K_t = K_t^f$.   The technical fact that
we need is then that:\\
\ \\
(*) For $J_1$ sufficiently large, Corollaries \ref{kersize} and \ref{Lalphok} continue to hold for 
all $f \in {\mathcal S}_{J_1}$.\\
\ \\
Say that (*) is known, let us revert to the notation of the first paragraph of the proof,
and let us show that (iii) follows for $J$ sufficiently large.  By the Riesz-Thorin interpolation
theorem, we need only do so for $p = 1$ and $\infty$.  If $t = N^{-1/s}/2C_0$, we have
\begin{equation}
\label{kgt}
{\mathcal L}^{r/2}\varphi_i^N = N^{-1}t^{-r}\sum_l (t^2\lambda_l)^{r/2} f(t^2\lambda_l) u_l(x_i^N)u_l(x)
= CN^{\frac{r}{s}-1}K^g_t(x_i^N,x),
\end{equation}
where $g(u) = u^{r/2}f(u)$ and $C$ is independent of $N, i$ or $t$.  Now $g$ may not be in ${\mathcal S}(\mathrm{R}^+)$,
since it might not be smooth at the origin, but if $M$ is sufficiently large, it will be in ${\mathcal S}_{J_0}$, and
thus we may apply Corollaries \ref{kersize} and \ref{Lalphok} to it.  Thus, by (\ref{kgt}) and Corollary \ref{Lalphok},
for $p=1$ we have $\|{\mathcal L}^{r/2}\varphi_i^N\|_1 \leq CN^{\frac{r}{s}-1}$, with $C$ independent of $i,N$.  
(iii) for $p=1$ is an immediate consequence.  As for $p = \infty$, we again set $t = N^{-1/s}/2C_0$.
By Corollary (\ref{kersize}), we have that for any $x$, 
\begin{eqnarray*}
|\sum_{i=1}^{P_N}a_i {\mathcal L}^{r/2}\varphi_i^N(x)| & \leq & 
CN^{\frac{r}{s}-1} \|a\|_{\infty}\sum_{i=1}^{P_N}\frac{t^{-s}}{(1+d(x_i^N,x)/t)^{s+1}}\\
& \leq & CN^{\frac{r}{s}+1} \|a\|_{\infty}\sum_{i=1}^{P_N}\frac{\mu(B_i^N)}{(1+d(x_i^N,x)/t)^{s+1}}\\
& \leq & CN^{\frac{r}{s}+1} \|a\|_{\infty}\int_{\bf M}\frac{dy}{(1+d(y,x)/t)^{s+1}}\\
& \leq & CN^{\frac{r}{s}} \|a\|_{\infty},
\end{eqnarray*}
with $C$ independent of $a, N$, proving (iii).  (In the fourth line we have used the fact that, for 
all $x \in {\bf M}$, all $t > 0$, all $i$ and $N$, and all $y \in B_i^N$,
by the triangle inequality, $(1+d(y,x)/t) \leq C(1+d(x_i^N,x)/t)$ with $C$ independent of $x,y,t,i,N$.  
In the last line we have used (\ref{intest}).)\\

Thus we need only establish the technical fact (*).  In the arguments just given, 
$t = N^{-1/s}/2C_0$ will be less than $1$ except for only finitely many values of $N$.  (iii)
is trivial for those finitely many $N$, so for the purposes of our arguments, we may assume 
$0 < t < 1$.  Thus, for our purposes, we may work in situation (ii) of Corollary \ref{kersize}.
(Situation (i) can be treated similarly.)  We need only establish Corollary
\ref{kersize} for $f \in {\mathcal S}_{J_1}$ for suitable $J_1$ under hypothesis (i), since as we know,
Corollary \ref{Lalphok} is an immediate consequence.

To do this, we let $Z = (0,1) \times {\bf M} \times {\bf M}$, and we let $V$ denote the Banach
space of continuous functions $H$ on $Z$ for which 
$$
\|H\|_V := \sup_{(t,x,y) \in Z} t^s[1+d(x,y)/t]^{s+1}|H(x,y)| 
< \infty.
$$
  For $f \in {\mathcal S}_{J_0}(\bf{R}^+)$, let $H^f(t,x,y) = K_t^f(x,y)$.
By Corollary \ref{kersize}, the linear map $f \to H^f$ takes ${\mathcal S}(\bf{R}^+)$ to $V$; we shall
use the closed graph theorem for Fr\'echet spaces to show that this map is continuous.  Indeed, suppose 
that $f^k \to f$ in ${\mathcal S}(\bf{R}^+)$ and that $H^{f^k} \to H^g$ in $V$, for some
$g \in {\mathcal S}(\bf{R}^+)$.  Then surely $f^k \to f$ in ${\mathcal S}_{J_0}(\bf{R}^+)$,
so by (\ref{ktt}), $H^{f^k} \to H^f$ pointwise; accordingly, $g = f$.  Thus the map is continuous, and
so there is a $C, J_2 \geq J_0$ for which
\begin{equation}
\label{clgrway}
|K_t^f(x,y)| \leq C\|f\|_{{\mathcal S}_{J_2}}\frac{t^{-s}}{(1+d(x,y)/t)^{s+1}}
\end{equation}
for all $t,x,y$ and all $f \in {\mathcal S}(\bf{R}^+)$.

Finally, let $J_1 = J_2 + 1$, and suppose $f \in {\mathcal S}_{J_1}(\bf{R}^+)$.  There is a sequence $f^k$ of
elements of ${\mathcal S}(\bf{R}^+)$ which approaches $f$ in ${\mathcal S}_{J_2}(\bf{R}^+)$.  (Use cutoff functions
and approximate identities.)  Fix $t,x,y$, write (\ref{clgrway}) for $f^k$ in place of $f$, and let $k \to \infty$.
The left sides approach $|K_t^f(x,y)|$, by (\ref{ktt}), while the right sides approaches the right side 
of (\ref{clgrway}).  This proves (*).\\
\ \\
In proving Theorem \ref{basicshp}, we will also obtain lower bounds for the  Gelfand widths $d^n(B^r_p,L_q)$. 

\begin{lem}
\label{lowbd1}
Say $1 \leq p,q \leq \infty$.  If $s_n = d_n$ or $d^n$, then
\begin{equation}
\label{lowbd1way}
s_n(B^r_p,L_q) \geq CN^{-\frac{r}{s}+\frac{1}{p}-\frac{1}{q}}s_n(b_p^{P_N},\ell_q^{P_N})
\end{equation}
for any sufficiently large $n,N$, with $C$ independent of $n,N$.
\end{lem}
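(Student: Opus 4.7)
The plan is to construct, for each admissible $N$, a linear injection $T_N:\mathbb{R}^{P_N}\to W_p^r\cap L_q$ built from the bumps of Lemma \ref{disfns}, namely
\[ T_N(a) = N^{-\frac{r}{s}+\frac{1}{p}}\sum_{i=1}^{P_N} a_i\,\varphi_i^N, \]
and to compare widths on both sides through this map. Property (iii) of Lemma \ref{disfns} gives $\|\mathcal L^{r/2}T_N(a)\|_p\leq C\|a\|_{\ell_p^{P_N}}$, while the disjointness of the supports (property (i)) combined with the norm estimates (property (ii)) gives
\[ \|T_N(a)\|_q \;\asymp\; N^{-\frac{r}{s}+\frac{1}{p}-\frac{1}{q}}\|a\|_{\ell_q^{P_N}}, \qquad 1\leq q\leq\infty, \]
and in particular $\|T_N(a)\|_p \asymp N^{-r/s}\|a\|_{\ell_p^{P_N}}\leq \|a\|_{\ell_p^{P_N}}$ for $N$ large. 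Hence $T_N(b_p^{P_N})\subset C\cdot B_p^r$ with $C$ independent of $N$.

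For the Gelfand case I would reason as follows. Let $Z^n\subset L_q$ have codimension $\leq n$, and set $V_N=\mathrm{span}(\varphi_i^N)$. Because $T_N$ is an isomorphism of $\mathbb{R}^{P_N}$ onto $V_N$, the preimage $W:=T_N^{-1}(Z^n)=T_N^{-1}(Z^n\cap V_N)$ is a subspace of $\mathbb{R}^{P_N}$ whose codimension equals $\mathrm{codim}_{V_N}(Z^n\cap V_N)\leq n$. For every $a\in W\cap b_p^{P_N}$, the element $C^{-1}T_N(a)$ lies in $B_p^r\cap Z^n$ and has $L_q$-norm $\asymp N^{-r/s+1/p-1/q}\|a\|_{\ell_q^{P_N}}$; taking the supremum over $a$ and then the infimum over $Z^n$ produces the desired bound for $d^n$.

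For the Kolmogorov case I would exhibit a bounded left inverse $S:L_q({\bf M})\to\ell_q^{P_N}$ of $T_N$. Using the $L_2$-orthogonality of the $\varphi_i^N$ that comes from their disjoint supports, a natural choice is
\[ (Sf)_i=\|\varphi_i^N\|_2^{-2}\int_{\bf M} f\,\varphi_i^N, \]
so that $ST_N = N^{-r/s+1/p}\,\mathrm{Id}$. Applying H\"older on each disjoint support and summing in the $q$-th power should yield $\|Sf\|_{\ell_q^{P_N}}\lesssim N^{1/q}\|f\|_q$ (the factor $N^{1/q}$ comes from $\|\varphi_i^N\|_{q'}/\|\varphi_i^N\|_2^2\asymp N^{-1/q'}/N^{-1} = N^{1/q}$). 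For any $n$-dimensional $Z_n\subset L_q$, $S(Z_n)$ is at most $n$-dimensional, and for $a\in b_p^{P_N}$,
\[ \inf_{g\in Z_n}\|T_N(a)-g\|_q \;\geq\; \|S\|^{-1}\,N^{-r/s+1/p}\inf_{b\in N^{r/s-1/p}S(Z_n)}\|a-b\|_{\ell_q^{P_N}}. \]
Passing to the supremum over $a$, using $T_N(b_p^{P_N})\subset C\cdot B_p^r$, and combining the scalings gives the stated lower bound for $d_n$.

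The main obstacle I anticipate is the construction of $S$ with the correct operator norm: everything downstream follows mechanically from a factorization/duality argument once $S$ is in hand, but one must exploit the disjoint supports (property (i)) and the sharp asymptotics (property (ii)) carefully, since the $\varphi_i^N$ themselves are oscillating (in particular mean-zero) and so a naive averaging functional will not produce a left inverse. The orthogonality trick circumvents this and keeps all constants independent of $N$.
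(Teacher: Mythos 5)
Your proposal is correct and follows essentially the same route as the paper: the map $T_N$ is the paper's embedding $a\mapsto g_a=\sum_i a_i\varphi_i^N$ with the scaling $\epsilon_N=cN^{-\frac{r}{s}+\frac{1}{p}}$, your functional $S$ built from the $L_2$-pairings $\|\varphi_i^N\|_2^{-2}\int f\varphi_i^N$ is exactly the paper's projection $Q_N$ onto $H_N=\mathrm{span}\{\varphi_i^N\}$ (bounded on $L_q$ by disjoint supports, H\"older, and Lemma \ref{disfns}(ii)), and your subspace-restriction argument for $d^n$ is the hands-on version of the paper's use of the Hahn--Banach fact $d^n(K,X)=d^n(K,Y)$. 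The only difference is packaging, not substance.
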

\begin{proof}
 With the $\varphi_i^N$ as in Lemma \ref{disfns}, let $H_N$ denote
the space of functions of the form
\begin{equation}
\label{hndf}
g_a = \sum_{i=1}^{P_N} a_i \varphi_i^N,
\end{equation}
for $a = (a_1,\ldots,a_{P_N}) \in \bf{R}^{P_N}$.  By Lemma \ref{disfns} (i) and (ii), 
and the disjointness of the $B_i^N$,
\begin{equation}
\label{gaq}
\|g_a\| \asymp N^{-1/q}\|a\|_q,
\end{equation}
with constants independent of $N$ or $a$.  
By Lemma \ref{disfns} (iii), for some $c > 0$, if we set $\epsilon = \epsilon_N = cN^{-\frac{r}{s}+\frac{1}{p}}$,
and if $a \in \epsilon b_p^{P_N}$, then $g_a \in B^r_p$.  Thus,
\begin{equation}
\label{gndf}
G_N := \{g_a \in H_N: a \in \epsilon b_p^{P_N}\} \subseteq B^r_p.
\end{equation}
For the Gelfand widths, it is a consequence of the Hahn-Banach theorem,
that if $K \subseteq X \subseteq Y$, where $X$ is a subspace of the normed space $Y$, then
$d^n(K,X) = d^n(K,Y)$ for all $n$.  Thus,
\[ d^n(B^r_p,L_q) \geq d^n(G_N,L_q) = d^n(G_N,H_N) \geq CN^{-1/q}d^n(\epsilon_N b_p^{P_N},\ell_q^{P_N})\]
for some $C$ independent of $n,N$, by (\ref{gaq}) and (\ref{gndf}).  This proves the lemma for the 
Gelfand widths.

For the Kolmogorov widths, for the same reason, we need only show that
\begin{equation}
\label{kolgd}
d_n(B^r_p,L_q) \geq Cd_n(G_N,H_N).
\end{equation}
with $C$ independent of $n,N$.

To this end we define the projection operator $Q_N: L_q \to H_N$ by
\[ Q_Nh = g_a,\ \ \ \ \ \ \ \ \ \ \ \ \ \mbox{ where } a_i = \frac{\int h\varphi_i^N}{\|\varphi\|^2_2}.  \]
By Lemma \ref{disfns} (i), (ii) and H\"older's inequality, we have here that each $|a_i| \leq C\|h\chi_i^N\|_q N^{1-1/q'}$,
where $\chi_i^N$ is the characteristic function of $B_i^N$.  By (\ref{gaq}) and the disjointness of the $B_i^N$, we have that
\begin{equation}
\label{qngd}
\|Q_N h\|_q = \|g_a\|_q \asymp N^{-1/q}\|a\|_q \leq cN^{1-1/q-1/q'}\|h\|_q = c\|h\|_q,
\end{equation}
with $C$ independent of $n,N$.  

Accordingly, for any $g \in H_N$ and $h \in L_q$, we have that 
$$
\|g-Q_Nh\|_q = \|Q_Ng-Q_Nh\|_q \leq c\|g-h\|q.
$$
  Thus, if
$K$ is any subset of $H_N$, $d_n(K,L_q) \geq c^{-1}d_n(K,H_N)$.  In particular
\[ d_n(B^r_p,L_q) \geq d_n(G_N,L_q) \geq c^{-1}d_n(G_N,H_N). \]
This establishes (\ref{kolgd}), and completes the proof.\\

{\bf Proof of Theorem \ref{basicshp}}    We will need several facts about widths. First, say $p \geq p_1$, $q \leq q_1$, and $S^n = d_n, d^n$ or 
$\delta_n$.  One then has the following two evident facts
\begin{equation}
\label{pupqdn}
S^n(B^r_p,L_q) \leq CS^n(B^r_{p_1},L_{q_1})
\end{equation}
with $C$ independent of $n$, while
\begin{equation}
\label{pdnqup}
S^n(b_p^M,\ell_q^M) \geq CS^n(b_{p_1}^M,\ell_{q_1}^M)
\end{equation}
with $C$ independent of $n,M$.  

By Lemma \ref{balls}, we may choose $\nu > 0$ such that $P_{\nu n} \geq 2n$ for all sufficiently
large $n$.  In this proof we will always take $N = \nu n$.  We consider the various ranges of $p,q$
separately:

\begin{enumerate}
\item
$q \leq p$.\\
\ \\
In this case, we note that if $S^n = d_n, d^n$ or $\delta_n$, then by (\ref{pupqdn}),
\begin{equation}
\label{pqinf1}
S_n(B^r_p,L_q) \geq CS_n(B^r_{\infty},L_{1}).
\end{equation}
On the other hand, if $s_n = d_n$ or $d^n$, then by (3.1) on page 410 of \cite{LGM},
$s_n(b_{\infty}^{P_N},\ell_{1}^{P_N}) = P_N - n \geq n$.  By this, (\ref{pqinf1}) and
Lemma \ref{lowbd1}, we find that
\[ s_n(B^r_p,L_q) \gg n^{-\frac{r}{s}-1}n = n^{-\frac{r}{s}} \]
first for $s_n = d_n$ or $d^n$ and then for $\delta_n$, by (\ref{linmax}).  This
completes the proof in this case.
\item
$1 \leq p \leq q \leq 2$.\\
\ \\ 
In this case, for the Gelfand widths we just observe, by (\ref{pupqdn}), that
\begin{equation}
\label{pqp1}
d^n(B^r_p,L_q) \geq Cd^n(B^r_{p},L_{p}) \gg n^{-\frac{r}{s}}
\end{equation}
by case 1.  For the Kolmogorov widths we observe, by Lemma \ref{lowbd1} and (\ref{pdnqup}), that
\begin{equation}
\label{pq12}
d_n(B^r_p,L_q) \gg n^{-\frac{r}{s}+\frac{1}{p}-\frac{1}{q}}d_n(b_{p}^{P_N},\ell_q^{P_N}) \gg
$$
$$
n^{-\frac{r}{s}+\frac{1}{p}-\frac{1}{q}}d_n(b_{1}^{P_N},\ell_2^{P_N}) \gg
n^{-\frac{r}{s}+\frac{1}{p}-\frac{1}{q}},
\end{equation}
since, by (3.3) of page 411 of \cite{LGM}, $d_n(b_{1}^{P_N},\ell_2^{P_N}) = \sqrt{1-n/P_N} \geq 1/\sqrt{2}$.
Finally, for the linear widths, we have by (\ref{linmax}), that
\[  \delta_n(B^r_p,L_q) \gg n^{-\frac{r}{s}+\frac{1}{p}-\frac{1}{q}}. \]
This completes the proof in this case.
\item
$2 \leq p \leq q$.\\
\ \\ 
In this case, for the Kolmogorov widths we just observe, by (\ref{pupqdn}), that
\begin{equation}
\label{pqp1}
d_n(B^r_p,L_q) \geq Cd_n(B^r_{p},L_{p}) \gg n^{-\frac{r}{s}}
\end{equation}
by case 1.  For the Gelfand widths we observe, by Lemma \ref{lowbd1} and (\ref{pdnqup}), that
\begin{equation}
\label{pq12}
d^n(B^r_p,L_q) \gg n^{-\frac{r}{s}+\frac{1}{p}-\frac{1}{q}}d^n(b_{p}^{P_N},\ell_q^{P_N}) \gg
$$
$$
n^{-\frac{r}{s}+\frac{1}{p}-\frac{1}{q}}d^n(b_{2}^{P_N},\ell_{\infty}^{P_N}) \gg
n^{-\frac{r}{s}+\frac{1}{p}-\frac{1}{q}},
\end{equation}
since, by (3.5) of page 412 of \cite{LGM}, 
$$
d^n(b_{2}^{P_N},\ell_{\infty}^{P_N}) = \sqrt{1-n/P_N} \geq 1/\sqrt{2}.
$$
Finally, for the linear widths, we have by (\ref{linmax}), that
\[  \delta_n(B^r_p,L_q) \gg n^{-\frac{r}{s}+\frac{1}{p}-\frac{1}{q}}. \]
This completes the proof in this case.
\item
$1 \leq p \leq 2 \leq q \leq \infty$.\\
\ \\
Say $1 \leq \alpha \leq \alpha_1 \leq \infty$.  By H\"older's inequality,
\begin{equation}
\label{hold1}
\|a\|_{\alpha} \leq M^{\frac{1}{\alpha}-\frac{1}{\alpha_1}}\|a\|_{\alpha_1} 
\end{equation}
if $a \in \mathrm{R}^M$.  This implies that
\begin{equation}
\label{hold2}
b_{\alpha_1}^M \subseteq M^{\frac{1}{\alpha_1}-\frac{1}{\alpha}}b_{\alpha}^M.
\end{equation}
From Lemma \ref{lowbd1}, (\ref{pdnqup}) and (\ref{hold1}), we find that
\begin{equation}
\label{1p2qdn1}
d_n(B^r_p,L_q) \gg n^{-\frac{r}{s}+\frac{1}{p}-\frac{1}{q}}d_n(b_p^{P_N},\ell_q^{P_N})
\gg 
$$
$$
n^{-\frac{r}{s}+\frac{1}{p}-\frac{1}{q}}d_n(b_1^{P_N},\ell_q^{P_N})
\gg n^{-\frac{r}{s}+\frac{1}{p}-\frac{1}{2}}d_n(b_1^{P_N},\ell_2^{P_N})
\gg n^{-\frac{r}{s}+\frac{1}{p}-\frac{1}{2}}.
\end{equation}
From Lemma \ref{lowbd1}, (\ref{pdnqup}) and (\ref{hold2}), we find that
\begin{equation}
\label{1p2qdn2}
d^n(B^r_p,L_q) \gg n^{-\frac{r}{s}+\frac{1}{p}-\frac{1}{q}}d^n(b_p^{P_N},\ell_q^{P_N})
\gg 
$$
$$
n^{-\frac{r}{s}+\frac{1}{p}-\frac{1}{q}}d^n(b_p^{P_N},\ell_{\infty}^{P_N})
\gg n^{-\frac{r}{s}+\frac{1}{2}-\frac{1}{q}}d^n(b_2^{P_N},\ell_{\infty}^{P_N})
\gg n^{-\frac{r}{s}+\frac{1}{2}-\frac{1}{q}}.
\end{equation}
Finally, from (\ref{1p2qdn1}), (\ref{1p2qdn2}) and (\ref{linmax}),
\begin{equation}
\label{1p2qdn3}
\delta_n(B^r,L_q) \gg \max(n^{-\frac{r}{s}+\frac{1}{p}-\frac{1}{2}},n^{-\frac{r}{s}+\frac{1}{2}-\frac{1}{q}}).
\end{equation}
This completes the proof.

\end{enumerate}
\end{proof}

\end{document}